\newcommand{\rrvert}{\vert}
\newcommand{\llvert}{\vert}
\newcommand{\mathds}{\mathbh}
\newcommand{\eqref}[1]{(\ref{#1})}
\newtheorem{prop}{Proposition}[section]
\newtheorem{lem}[prop]{Lemma}
\newtheorem{theo}[prop]{Theorem}
\newtheorem{coro}[prop]{Corollary}
    \newtheorem{conj}[prop]{Conjecture}
\newcommand{\eps}{\varepsilon}
\newcommand{\Pb}{\mathbb{P}}
\newcommand{\Fk}{\mathcal{F}_k}
\newcommand{\Mc}{\mathcal{M}}
\newcommand{\E}{\mathbb{E}}
\newcommand{\om}{\omega}
\newcommand{\kc}{\widehat{k}}
\newcommand{\kt}{\widetilde{k}}
\newcommand{\kb}{\bar{k}}
\newcommand{\ks}{\underline{k}}
\newcommand{\al}{\alpha}
\newcommand{\1}{\mathbh{1}}
\begin{document}
\begin{frontmatter}

\title{Stuck walks: A conjecture of Erschler, T\'oth and Werner}
\runtitle{Stuck walks: A conjecture of Erschler, T\'oth and Werner}

\begin{aug}
\author[A]{\fnms{Daniel}~\snm{Kious}\corref{}\ead[label=e1]{daniel.kious@epfl.ch}}
\runauthor{D. Kious}
\affiliation{Ecole Polytechnique F\'ed\'erale de Lausanne}
\address[A]{
EPFL SB MATHAA PRST\\
MA B1 537 (B\^{a}timent MA) \\
Station 8\\
CH-1015 Lausanne\\
Switzerland\\
\printead{e1}}
\end{aug}

%
\received{\smonth{9} \syear{2013}}
\revised{\smonth{11} \syear{2014}}

\begin{abstract}
In this paper, we work on a class of self-interacting nearest neighbor random walks, introduced in
[\textit{Probab. Theory Related Fields} \textbf{154} (2012) 149--163], for which there is competition between repulsion of neighboring edges and attraction of next-to-neighboring edges. Erschler,
T\'oth and Werner proved in
[\textit{Probab. Theory Related Fields} \textbf{154} (2012) 149--163] that, for any $L\ge1$, if the parameter $\alpha$ belongs to a certain interval $(\alpha_{L+1},\alpha_L)$, then such random walks localize on $L+2$
sites with positive probability. They also conjectured that this is the almost sure behavior. We prove this conjecture
partially, stating that the walk localizes on $L+2$ or $L+3$ sites almost surely, under the same assumptions. We also
prove that, if $\alpha\in(1,+\infty)=(\alpha_2,\alpha_1)$, then the walk localizes a.s. on $3$ sites.
\end{abstract}

\begin{keyword}[class=AMS]
\kwd[Primary ]{60K35}
\kwd[; secondary ]{60G20}
\kwd{60G42}
\end{keyword}
\begin{keyword}
\kwd{Stuck walks}
\kwd{reinforced random walks}
\kwd{localization}
\kwd{Rubin}
\kwd{time-line construction}
\kwd{martingale}
\end{keyword}
\end{frontmatter}


\section{Introduction}


Let $X:=(X_n)_{n\ge0}$ be a nearest neighbor walk on the integer lattice $\mathbb{Z}$. Let $l_k(j)$ be the local time
on the nonoriented edge $\{j-1,j\}$ up to time~$k$:
\begin{eqnarray*}
l_k(j)&:=&\sum_{m=1}^k
\1_{\{\{X_{m-1},X_m\}=\{j-1,j\}\}}.
\end{eqnarray*}

Define the filtration $(\Fk)_{k\in\mathbb{N}}$ generated by the process, that is,  for all
$k\in\mathbb{N}$, $\Fk=\sigma(X_0,\ldots,X_k)$.

Fix a real parameter $\alpha$ and define the following linear combination of local times on neighboring and
next-to-neighboring edges of a site $j$:
\begin{equation}
\Delta_k(j):=-\alpha l_k(j-1)+l_k(j)-l_k(j+1)+
\alpha l_k(j+2),\label{defdelta}
\end{equation}
and, with a slight abuse of notation,
\[
\Delta_k:=\Delta_k(X_k).
\]
Fix another real parameter $\beta>0$. In this paper, we consider the walk introduced in \cite{ETW}, defined by $X_0=0$
and the following conditional transition probabilities:
\begin{eqnarray}
\Pb(X_{k+1}=X_k\pm1|\Fk)&=& \frac{e^{\pm\beta\Delta_k}}{e^{-\beta\Delta_k}+e^{\beta\Delta_k}}.\label{ec1}
\end{eqnarray}

The linear combination $\Delta_k$ can be seen as the \emph{local stream} felt by the walker. When this stream is
positive (resp., negative), the walk bends toward the right (resp., toward the left). The value of the parameter $\beta$
does not affect very much the behavior of the walk, whereas the value of $\alpha$ plays a crucial role, as we explain
below.

This model is a generalization of the true self-repelling walk (TSRW) in one dimension. We can recover the TSRW with edge repulsion, by choosing $\alpha=0$, as well as the TSRW with site repulsion, by\vspace*{1pt} choosing $\alpha=-1$. In the case of edge repulsion, a nondegenerate scaling limit for $X_k/k^{2/3}$ is proved in \cite{BT9} and the same scaling limit is conjectured for site repulsion; we refer the reader to \mbox{\cite{APP,BT10}} for more details.
Some interesting work has also been done concerning the continuous
space--time true self-repelling motion, see, for instance, \cite{LD,LDBT,BTWW}.

A scaling behavior similar to the one of the edge-repelled TSRW is expected for $\alpha\in[-1,1/3)$. Roughly speaking,
$\alpha$ is then sufficiently close to $0$ and this can be seen as a perturbation of the TSRW. The case $\alpha=1/3$ is more mysterious, and the nondegenerate scaling of the walk might be like $k^{2/5}$. For
$\alpha\in(-\infty,-1)$, the walker is repelled by its neighboring edges and even more strongly by its
next-to-neighboring edges. In this case, it seems that the walk self-builds trapping environments, which causes a
slowing down of the walk. We refer the reader to \cite{ETW2} for more detailed discussions and arguments.

In \cite{ETW}, Erschler, T\'oth and Werner focus on the case where $\alpha>1/3$. In particular, the walker is repelled
by its neighboring edges and attracted by its next-to-neighboring edges. Therefore, there is competition between
repulsion and \mbox{attraction}: the last one might win, resulting in localization of the walk on an arbitrarily large
interval depending on $\al$.

More precisely, let us define subintervals of $(1/3,+\infty)$ corresponding, as we will see later, to different
possible features of the walk.

Define the sequence $(\alpha_L)_{L\ge1}$ by $\alpha_1:=+\infty$ and for all $L\ge2$:
\begin{equation}
\label{defalpha} \alpha_L:=\frac{1}{1+2\cos({2\pi}/(L+2))},
\end{equation}
so that this sequence decreases from $+\infty$ to $1/3$ as $L$ increases from $1$ to $+\infty$.

Define, for all $k\in\mathbb{N}$ and $j\in\mathbb{Z}$, the number of visits to the site $j$, up to time $k$:
\[
Z_k(j):=\sum_{m=1}^k
\1_{\{X_m=j\}}=\frac{l_k(j)+l_k(j+1)+\1_{\{X_k=j\}}-\1_{\{j=0\}}}{2},
\]
and let $Z_\infty(j)$ be its limit when $k$ goes to infinity.
Let $R$ (resp., $R'$) be the set of points that are visited at least once (resp., infinitely often), that is,
\begin{eqnarray*}
R&:=& \bigl\{j\in\mathbb{Z}\dvtx Z_{\infty}(j)>0 \bigr\},
\\
R'&:=& \bigl\{j\in\mathbb{Z}\dvtx Z_{\infty}(j)=\infty \bigr
\}.
\end{eqnarray*}

In \cite{ETW}, the authors prove the following result.

\begin{theo}[(Erschler, T\'oth and Werner \cite{ETW})] \label{ETW1}
Suppose that $L\ge1$. We have:
\begin{itemize}
\item[--] If $\alpha<\alpha_L$, then, almost surely, $\llvert  R'\rrvert  \ge L+2$, or $R'=\varnothing$.

\item[--] If $\alpha\in(\alpha_{L+1},\alpha_L)$, then the probability that $\llvert  R'\rrvert  =L+2$ is positive.
\end{itemize}

Moreover, there exists a deterministic real valued vector $(v_1,\ldots,v_{L+1})$, such that, on the event
$\{R'=\{x,x+1,\ldots,x+L+1\}\}$, we almost surely have the following law of large numbers:
\[
\lim_{k\rightarrow +\infty} \frac{1}{k}\bigl(l_k(x+1),
\ldots,l_k(x+L+1)\bigr)=(v_1,\ldots,v_{L+1}).
\]
\end{theo}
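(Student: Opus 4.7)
My plan rests on three interleaved tools: Rubin's time-line (Poisson clock) construction, a linear-algebra analysis of the ``balance relations'' at interior sites, and a stochastic-approximation argument on the localization event. Rubin's construction replaces each oriented edge crossing by an independent exponential clock with intensity $e^{\pm\beta\Delta_k}$, decoupling the walk's combinatorics from the evolution of $(\Delta_k)_k$ and making both martingale bounds and an ODE approximation available.

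First, to establish the LLN on the event $\{R'=\{x,\ldots,x+L+1\}\}$, I would show that the two boundary edges $\{x-1,x\}$ and $\{x+L+1,x+L+2\}$ are crossed only finitely often, so $\Delta_k(x+i)$ depends asymptotically only on the interior local times $l_k(x+1),\ldots,l_k(x+L+1)$. At each interior site $x+i$ with $1\le i\le L$, visited infinitely often but never escaped, the left- and right-transition counts must agree in the limit; using \eqref{ec1} this forces $\Delta_\infty(x+i)=0$. In the unknowns $v_i:=\lim_k l_k(x+i)/k$ this is a tridiagonal linear system of $L$ equations with coefficients drawn from $(-\alpha,1,-1,\alpha)$; together with the normalization $\sum_i v_i=1$ it admits a unique solution provided the associated matrix is non-singular, and a Chebyshev-polynomial computation identifies the singular values of $\alpha$ precisely with the sequence \eqref{defalpha}. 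For $\alpha\in(\alpha_{L+1},\alpha_L)$ this unique solution has all coordinates strictly positive, which is consistent with the walk spending linear time at every site of $R'$.

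Second, for the lower bound $|R'|\ge L+2$, I would argue by contradiction. If $R'\neq\emptyset$ and $|R'|=M\le L+1$, the same analysis gives a candidate vector $v^{(M)}$ of length $M-1$; its strict positivity requires $\alpha>\alpha_{M-1}$, but the monotonicity of $L\mapsto\alpha_L$ together with the hypothesis $\alpha<\alpha_L\le\alpha_{M-1}$ rules this out. The delicate part is turning the heuristic ``balance in the limit'' into an honest almost-sure identity, and in particular the strict positivity of all $v_i^{(M)}$; Rubin's construction reduces this to martingale fluctuation estimates on $\Delta_k$ together with a Borel--Cantelli argument excluding degenerate trajectories where interior sites are visited infinitely often but the rates fail to balance.

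Third, for the positive probability of $|R'|=L+2$ when $\alpha\in(\alpha_{L+1},\alpha_L)$, I would condition on a finite-time event that installs an initial ``ballast'' configuration of local times on $L+2$ consecutive sites with ratios close to $(v_1,\ldots,v_{L+1})$; such an event has positive probability by explicit construction of a deterministic initial path. Starting from this ballast, the time-line representation makes the normalized local-time vector a small perturbation of a deterministic ODE with stable fixed point $v$, and a stochastic-approximation bound on the probability of ever crossing a boundary edge, summable in the ballast size, produces a strictly positive lower bound on the localization event. The main obstacle throughout is Step~2: turning asymptotic balance into a rigorous positivity contradiction for $v^{(M)}$ requires tight control on the martingale component of $\Delta_k$, and this is precisely where the decoupling provided by the Rubin construction is essential.
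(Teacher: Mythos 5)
First, a point of record: the paper does not prove this statement at all --- it is quoted verbatim from Erschler, T\'oth and Werner \cite{ETW} and used as an input to the new results. So there is no internal proof to compare against; I can only measure your sketch against the ETW strategy, which the paper summarizes in Section \ref{mainproof} and whose linear-algebraic core it reproves in Section \ref{trigogo}. Your Steps 1 and 3 (balance relations at interior sites forcing $\Delta_k(x+i)/k\to0$, the trigonometric analysis of the resulting Fibonacci-type system, and a positive-probability construction from a finite ``ballast'' configuration with inward-pointing boundary drifts) are broadly faithful to that strategy, although Rubin's construction is not part of the ETW argument and is not needed for it. One small imprecision: what is provable and needed is $\Delta_k(x+i)/k\to0$, not $\Delta_\infty(x+i)=0$.

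Your Step 2, however, rests on a mechanism that is simply false, and the lower bound $|R'|\ge L+2$ would not follow from it. You claim that for $|R'|=M\le L+1$, i.e.\ $K:=M-2\le L-1$, the candidate vector $v^{(M)}$ fails to be strictly positive. The opposite is true: by Proposition \ref{systpos}, for $\alpha\in(\alpha_{L+1},\alpha_L)$ the solution \eqref{lj}, $l_j\propto\sin\big(\tfrac{K+2-j}{2}\om\big)\sin\big(\tfrac{j\om}{2}\big)$, has all coordinates strictly positive for every $K\in\{0,\dots,L+1\}$, because $\om<2\pi/(L+2)\le 2\pi/(K+1)$. Positivity can only fail for \emph{large} $K$, not small $K$, so no contradiction arises for $M\le L+1$ and your argument does not close. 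The actual obstruction for small intervals is the sign of the boundary streams: by \eqref{d01}, $d_0=-\alpha\sin\big(\tfrac{K+3}{2}\om\big)\sin\big(\tfrac{\om}{2}\big)/Z$ is \emph{negative} precisely when $K\le L-1$ (since then $\om<2\pi/(K+3)$), and by symmetry $d_{K+1}=-d_0>0$. These drifts point \emph{outward} at the two extreme sites, so a walker confined to $M\le L+1$ sites would feel, at the boundary, a stream of strength linear in $k$ pushing it out of the interval; converting this into an almost sure escape (and hence $|R'|\ge L+2$ or $R'=\emptyset$) is the real content of the lower bound, and it is entirely absent from your sketch. Relatedly, your parenthetical ``$\alpha<\alpha_L\le\alpha_{M-1}$'' points the inequality the wrong way for your purposes: under that hypothesis positivity of $v^{(M)}$ \emph{holds}, it is not ruled out.
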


\begin{rem}
As in \cite{ETW}, we will give an explicit form for the vector $(v_1,\ldots,v_{L+1})$ later in this paper: it corresponds to the solutions of a linear system given by Proposition~\ref{systpos} (in the case $K=L$).
\end{rem}

Erschler, T\'oth and Werner also propose the following conjecture.

\begin{conj}[(Erschler, T\'oth and Werner \cite{ETW})]
If $\alpha\in(\alpha_{L+1},\alpha_L)$, then $\llvert  R'\rrvert  =L+2$ almost surely.
\end{conj}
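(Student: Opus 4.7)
The plan is to combine the lower bound $|R'|\geq L+2$ from Theorem~\ref{ETW1} with a matching upper bound $|R'|\leq L+2$ almost surely. The argument splits into an algebraic part, classifying the possible asymptotic local-time profiles, and a dynamical part, showing that the walk cannot sustain a profile on more than $L+2$ sites. Since Theorem~\ref{ETW1} already gives $|R'|\geq L+2$ or $R'=\emptyset$, it suffices to work on $\{|R'|\geq L+2\}$ and derive a contradiction from $|R'|>L+2$.

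First I would classify the admissible asymptotic profiles. On $\{R'=\{x,x+1,\dots,x+K+1\}\}$ for some $K\geq L$, the walker eventually spends all its time in $R'$, so the restricted edge local times behave like those of a self-interacting walk on a finite interval with reflecting boundary. A martingale or stochastic approximation argument then identifies the possible limits of $l_k(x+j)/k$ with nonnegative solutions of the tridiagonal linear system obtained by setting the interior streams $\Delta_k(j)=0$ and imposing boundary inequalities that push the walker back into $R'$; this is the system of Proposition~\ref{systpos}. Using the spectral interpretation of $\alpha_L$ in \eqref{defalpha}, where $2\cos(2\pi/(L+2))$ is exactly a Dirichlet eigenvalue of the associated tridiagonal operator, one checks that this system admits a coordinatewise positive solution only when $\alpha\in(\alpha_{K+1},\alpha_K)$. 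For $\alpha$ in our target window this forces $K=L$ as the unique algebraically admissible size.

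Second I would close the gap between ``no admissible profile for $K\geq L+1$'' and ``the walker does not visit more than $L+2$ sites infinitely often''. I would invoke the time-line (Rubin) construction: realize the walk as a race of edges ringing at independent exponential clocks with rates modulated by $e^{\pm\beta\Delta_k}$. Once the walker has built up enough local time on an $(L+2)$-block, the attraction from next-to-neighboring edges dominates the repulsion at the boundary, so the probability of an escape during the next batch of visits decays exponentially in the accumulated local time, and a Borel--Cantelli argument delivers almost sure confinement.

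The main obstacle, which the author flags by proving only $|R'|\in\{L+2,L+3\}$, is ruling out the marginal case $|R'|=L+3$. On $L+3$ sites the linear stream equations still admit a signed solution whose negative component is small, so the naive Rubin estimate cannot separate it from the genuine $(L+2)$-profile. A possible resolution is to build an auxiliary martingale from a Perron-type eigenvector of the transfer matrix on $L+3$ sites whose quadratic variation forces fluctuations incompatible with stabilization; equivalently, one must show that the $(L+3)$-profile is \emph{linearly unstable} and that the walk itself realizes the instability. The extremal case $\alpha\in(\alpha_2,\alpha_1)$ dealt with in the paper, where only one boundary configuration needs excluding, suggests that tractable exclusions exist at the endpoint of the window, but extending this to general $L$ appears to require genuinely new ideas.
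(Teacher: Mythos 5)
There is a genuine gap, and in fact two distinct problems. First, the statement you are proving is the full conjecture, which the paper itself does \emph{not} establish: it only proves $|R'|\in\{L+2,L+3\}$ for general $L$, and $|R'|=3$ when $\alpha>1$ (i.e.\ $L=1$). Your proposal ends in the same place — you correctly identify the exclusion of $|R'|=L+3$ as the missing step, but your suggested resolution (a Perron-type eigenvector martingale exhibiting linear instability of the $(L+3)$-profile) is not carried out, and you concede it "appears to require genuinely new ideas." So the proposal is a strategy sketch for the known partial result plus an acknowledged open problem, not a proof of the conjecture.

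Second, your algebraic step contains an error that would break even the partial result as you have set it up. You claim the linear system admits a coordinatewise positive solution only for $K=L$, so that positivity alone "forces $K=L$ as the unique algebraically admissible size." This is false: Proposition~\ref{systpos} gives positive solutions with inward-pointing boundary streams for both $K=L$ and $K=L+1$, and Remark~\ref{remsyst} notes that for $K\ge L$ there can be many further values of $K$ with nonnegative solutions and $d_0>0$, $d_{K+1}<0$ — many "good candidates." The paper's actual mechanism for killing $K\ge L+2$ is not a classification by positivity, nor a confinement/Borel--Cantelli estimate at the boundary as you describe. It is Proposition~\ref{system}(ii): for every nonnegative solution with $K\ge L$, the \emph{interior} quantity $l_{L+2}-\alpha l_{L+1}$ is $\le -c(K)<0$. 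This makes the series $\sum_k e^{a\beta[l_k(x+L+2)-\alpha l_k(x+L+1)]}$ converge (Lemma~\ref{somfinie}), hence by a martingale argument the total clock time $T^-_{x+L+2}$ in the Rubin construction is finite (Proposition~\ref{inclu2}); since both neighbors of $x+L+2$ are visited infinitely often, this forces the null event $T^+_{x+L+2}=T^-_{x+L+2}<\infty$ (Propositions~\ref{proba0} and inclusion~\eqref{inclusion}). Your sketch misses this pivot from "boundary streams" to "an interior oriented edge is crossed only finitely often," which is the actual content of the argument, and it leaves the $K=L+1$ case untouched for $L\ge 2$.
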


The main goal of this paper is to prove the following result, which partially settles the conjecture.

\begin{theo} \label{L2L3}
Assume that $\al\in(\al_{L+1},\al_L)$, then the walk localizes on $L+2$ or $L+3$ sites almost surely, that is,
$\llvert  R'\rrvert  \in\{L+2,L+3\}$ a.s.
\end{theo}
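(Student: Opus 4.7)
By Theorem \ref{ETW1}, it is already known that $|R'|\ge L+2$ or $R'=\emptyset$ almost surely. The plan therefore splits into ruling out two scenarios: the transient behavior $R'=\emptyset$, and the over-localized behavior $|R'|\ge L+4$.

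I would begin by excluding the transient case $R'=\emptyset$. On this event $|X_k|\to\infty$; assume without loss of generality that $X_k\to+\infty$. Then for all large $k$ the walker visits only fresh territory to the right, and the local times at edges near $X_k$ stabilize to values determined solely by the recent history. The local stream $\Delta_k$ near the frontier becomes asymptotically constant, and one can attempt a coupling with an i.i.d.\ biased walk whose drift is given by \eqref{ec1} evaluated at this constant stream. For $\alpha>1$, the resulting drift points left, immediately contradicting transience to the right; for $\alpha\in(1/3,1)$, one needs a finer argument exploiting backtracking, together with a martingale or potential-function estimate, to show that the event of transience has probability zero.

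For the main step, ruling out $|R'|\ge L+4$, suppose for contradiction that on a positive-probability event $R'=\{x,x+1,\ldots,x+K+1\}$ for some $K\ge L+2$. I would use the time-line/Rubin construction underlying \cite{ETW}, which couples the walk with independent exponential clocks attached to each edge; under this coupling, edge-crossing counts become comparisons between independent sums, accessible to strong laws of large numbers and martingale estimates. The target is to show that on the localization event, the normalized edge local times $\frac{1}{k}(l_k(x+1),\ldots,l_k(x+K+1))$ converge almost surely to a non-negative vector $(v_1,\ldots,v_{K+1})$ satisfying the linear system of Proposition \ref{systpos} with parameter $K$. The defining property of the thresholds $\alpha_K$ is that this system admits a strictly positive solution precisely when $\alpha<\alpha_K$; since $\alpha>\alpha_{L+1}>\alpha_K$ whenever $K\ge L+2$, at least one component $v_i$ is forced to vanish. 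A careful analysis of which component vanishes and of the induced dynamics at the corresponding edge then shows an effective pinching of the localization interval, contradicting $|R'|=K+2$. The fact that the conclusion is $|R'|\le L+3$ rather than the conjectured $L+2$ presumably reflects the borderline case $K=L+1$, where exactly one boundary component of the solution vanishes, allowing a site to be visited infinitely often at sub-linear rate.

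The principal obstacle, in my view, is the almost-sure convergence step. In \cite{ETW}, the event $\{|R'|=L+2\}$ is constructed via a positive solution of the linear system, so convergence of the normalized local times is essentially built into the construction. Here we must extract convergence \emph{ex post} from only the hypothesis $|R'|=K+2$, while controlling a nonlinear system of interacting local times on an interval of length $K+2\ge L+4$ uniformly in time. It is at this point that the Rubin coupling and careful martingale estimates have to be orchestrated with real care, and doing so for all $K\ge L+2$ simultaneously is presumably where the present method stops short of the full Erschler–T\'oth–Werner conjecture.
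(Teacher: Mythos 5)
Your plan matches the paper's in broad outline (rule out transience, rule out $|R'|\ge L+4$, and invoke the time-line/Rubin construction together with the linear system), but the central mechanism you propose for the main step is not the one that works, and in fact relies on a false claim about the linear system.

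You assert that when $K\ge L+2$ (so $\alpha>\alpha_K$) the system of Proposition~\ref{systpos} cannot have a strictly positive solution, so that the limiting local-time vector must have a vanishing component, which would "pinch" the localization interval. This is not true. As the paper notes explicitly in Remark~\ref{remsyst}, for generic $\alpha$ one can find infinitely many $K\ge L$ for which the linear system~\eqref{tag} has a solution with all $l_j>0$ and even $d_0>0$, $d_{K+1}<0$ --- i.e.\ intervals of size $K+2$ that look like perfectly good candidates for the localization set from the point of view of signs. The thresholds $\alpha_L$ do not mark the disappearance of positive solutions; they mark where the boundary stream $d_0$ changes sign (equivalently, where $(L+2)\omega$ crosses a multiple of $2\pi$). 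Consequently, no contradiction can be derived from non-negativity of the limit vector alone, and your "pinching" step has no support. A related issue: you state that the normalized local times converge almost surely to a single vector $(v_1,\ldots,v_{K+1})$; the paper does not and cannot prove this for $K\ge L$, because the solution set of the linear system may be one-dimensional (see Remark~\ref{premsyst}); what is actually proved (Proposition~\ref{lemdeltak1}) is only that the interior streams $\Delta_k(x+j)/k$ tend to $0$, i.e.\ the normalized local times approach the solution \emph{set}, which suffices for the argument but is strictly weaker.

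The actual mechanism is different and more delicate. The key trigonometric fact (Proposition~\ref{system}(ii) / Corollary~\ref{systneg}(ii)) is that for $K\ge L$, any non-negative element of the solution set has $l_{L+2}-\alpha l_{L+1}\le -c(K)<0$. Combined with Proposition~\ref{lemdeltak1}, this yields the a.s.\ series convergence of Lemma~\ref{somfinie}, which via the martingale estimate of Proposition~\ref{inclu2} shows that the total clock time $T^-_{x+L+2}$ in the Rubin construction is finite on $\{R'=\{x,\ldots,x+K+1\}\}$. Since for $K\ge L+2$ both neighbors of $x+L+2$ are visited infinitely often, the inclusion~\eqref{inclusion} forces the exact coincidence $T^+_{x+L+2}=T^-_{x+L+2}<\infty$, and the monotone coupling of Proposition~\ref{proba0} shows this coincidence has probability zero. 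Nothing in this argument requires a component of the limit to vanish. Note also why the method stops at $L+3$ rather than $L+2$: when $K=L+1$, the vertex $x+L+3$ is outside $R'$, so the right neighbor of $x+L+2$ is not infinitely visited and inclusion~\eqref{inclusion} cannot be applied there --- this is a structural limitation of the construction, not, as you suggest, a "borderline vanishing boundary component" phenomenon. Finally, your treatment of transience via coupling to an i.i.d.\ biased walk with drift determined by a stabilized frontier stream is also not what the paper does and is unlikely to close for $\alpha\in(1/3,1)$ as you yourself note; the paper instead proves finite range (Proposition~\ref{proprang}) by showing, via Corollary~\ref{corgensyst}, that each visit to a fresh site is followed with uniformly positive probability by permanent confinement, a strategy borrowed from~\cite{PV}.
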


We also give, in both cases, the asymptotic behavior of the local times, which correspond to those obtained in
\cite{ETW}; see Proposition~\ref{systpos}.

When $\al>1$, that is, $\al\in(\al_2,\al_1)$, we can improve this result and prove the conjecture in this case, which is
done in Section~\ref{palgrd}.

\begin{theo} \label{algrd}
Assume that $\al\in(1,+\infty)$, then the walk localizes on $3$ sites almost surely, that is, $\llvert  R'\rrvert  =3$ a.s.
\end{theo}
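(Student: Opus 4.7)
By Theorem \ref{L2L3} applied with $L=1$, we already know that when $\al\in(1,+\infty)=(\al_2,\al_1)$, one has $|R'|\in\{3,4\}$ almost surely. Theorem \ref{algrd} will therefore follow once we show that the event $A:=\{|R'|=4\}$ has probability $0$. The strategy is to argue that four-site localization would force the walker's local times to settle to values incompatible with the trapping itself, and hence that $A$ cannot occur when $\al>1$.

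On $A$, write $R'=\{x,x+1,x+2,x+3\}$. The law-of-large-numbers part of the localization statement (Proposition \ref{systpos}, with $K=2$) is expected to yield that, on $A$, the normalized edge local times $(l_k(x+1),l_k(x+2),l_k(x+3))/k$ converge a.s.\ to a deterministic vector $(v_1,v_2,v_3)$ that must solve the linear system of \cite{ETW}. A direct computation shows that this system admits a componentwise positive solution precisely on the interval $(\al_3,\al_2)=(\al_3,1)$, so for $\al>1$ the vector $(v_1,v_2,v_3)$ that would describe the asymptotic local times either has a negative component or forces the asymptotic boundary stream $\Delta_k(x)/k$ or $\Delta_k(x+3)/k$ to converge to a strictly outward-pointing quantity. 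Intuitively, this means that the walker's bias at the extreme sites $x,x+3$ is asymptotically directed out of the interval $R'$, so each visit to such a boundary site leads to an exit attempt with probability bounded below by a positive constant.

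To turn this inconsistency into a rigorous contradiction, I would use the Rubin-type time-line construction and martingale estimates already invoked elsewhere in the paper: embed the dynamics via independent exponential clocks on the edges, and compare, on $A$ intersected with a typical local-time trajectory, the rate of events pushing the walker to $x-1$ or $x+4$ with the rate of events keeping it inside. Since the walker must visit at least one of the two boundary sites a positive fraction of times (the $v_i$ being bounded away from $0$), and since the outward bias at that site is uniformly bounded below on a tail event of the trajectory, a Borel--Cantelli argument then shows that the walker almost surely exits $\{x,\dots,x+3\}$ on $A$, contradicting the definition of $R'$.

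The main obstacle is making the asymptotic analysis valid \emph{on the conditional event} $A$, rather than merely with positive probability as in Theorem \ref{ETW1}: one needs an unconditional convergence of the renormalized local times on $A$ (this is the role of Proposition \ref{systpos} alluded to in the introduction), and uniform control of the exit probabilities along the random trajectory. Both points should follow from the martingale and Rubin-coupling techniques of the proof of Theorem \ref{L2L3}, specialized to the degenerate case where the candidate solution $(v_1,v_2,v_3)$ of the linear system leaves the positivity cone. Once this is in place, the contradiction rules out $A$ and yields $|R'|=3$ almost surely.
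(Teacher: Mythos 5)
Your reduction to ruling out $A=\{|R'|=4\}$ via Theorem \ref{L2L3} is correct and matches the paper, but the mechanism you propose for deriving a contradiction on $A$ does not work. For $\al\in(1,+\infty)=(\al_2,\al_1)$ and $K=2=L+1$, Proposition \ref{systpos} applies (it covers $K\in\{0,\dots,L+1\}$) and gives a \emph{unique, componentwise positive} solution of $(E_2)$ with $d_0>0$ and $d_3=-d_0<0$: the asymptotic boundary streams point \emph{inward}, not outward. Indeed the paper stresses exactly this point in Section \ref{mainproof}: ``If $K=L+1$, the unique solution is similar to the previous one, which makes $|R'|=L+3$ another good candidate.'' (Concretely, on $A$ one even has $\Delta_k(x)\to+\infty$.) So there is no negative component and no outward bias at the boundary, and the Borel--Cantelli argument you sketch -- based on a uniformly positive exit probability at $x$ or $x+3$ -- has nothing to feed on. This is precisely why the four-site case is the hard, residual case that Theorem \ref{L2L3} cannot eliminate and that requires the separate argument of Section \ref{palgrd}.

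What the paper actually exploits is a more delicate \emph{instability} of the four-site configuration, invisible at the level of first-order signs of $d_0,d_3$. The key algebraic fact is the affine identity \eqref{minisyst}: for $\al>1$, normalized increments of local times confined to $\{x,\dots,x+2\}$ satisfy $d_1+d_2\le-\frac{\al-1}{2}-\frac{\al-1}{2}d_1$, so the combined interior stream $\Delta_k(x+1)+\Delta_k(x+2)$ drifts strictly downward during excursions that avoid $x+3$ (and symmetrically upward during excursions avoiding $x$). Combined with the conditional Borel--Cantelli lemma, which forces $\Delta_k(x+1)$ to return infinitely often to a bounded window while $\Delta_k(x)\to+\infty$, this produces (Lemma \ref{infkc}) infinitely many times at which $X_k=x+1$, $\Delta_k(x+1)\le-C$ and $\Delta_k(x+1)+\Delta_k(x+2)\le-C$ (or the mirror event at $x+2$). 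At each such time, the identity \eqref{minisyst} shows that either $\Delta(x+1)$ or $\Delta(x+1)+\Delta(x+2)$ stays below $-C$ minus a linearly growing term until the walker next touches $x+3$, so the probability of ever reaching $x+3$ again is at most $1-\delta$ for a uniform $\delta>0$; iterating along a sequence of stopping times kills the event $A$. If you want to repair your write-up, you need to replace the claimed sign contradiction by an argument of this type (or some other genuine instability argument); the Rubin construction and the martingale estimates of Section \ref{monomart} are not used here at all.
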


Besides, we also believe that, for general $\al>1/3$ and $L\ge1$ such that $\alpha\in(\alpha_{L+1},\alpha_L)$, the event $\llvert  R'\rrvert  =L+3$ does not occur as it corresponds to an unstable
limiting behavior.

Another problem is the one of the critical values of $\al$, that is not treated here, neither in \cite{ETW}.
Nevertheless, Erschler, T\'oth and Werner give good arguments to make us believe that, if $\alpha=\alpha_{L+1}$, then the walk will almost surely not be stuck on $L+2$ sites; see the concluding remarks of \cite{ETW}.

Note also that it is usually quite challenging to obtain an almost sure behavior for a reinforced random walk and let
us mention some interesting results (see also \cite{12086375,VLPT08,MR2282181}). Tarr\`es proved in \cite{PT5pts} the localization on $5$ sites of the
vertex-reinforced random walk (VRRW) with linear reinforcement, which is an important result in the field of reinforced
random walks. In \cite{PTsurvey}, Tarr\`es proposed another proof of this result, introducing a variant of
so-called Rubin's construction, allowing powerful couplings. This variant was also used by Basdevant, Schapira and
Singh who proved in \cite{BSS1} a phase transition of the behavior of VRRWs, for nondecreasing weight functions of
order $n\log\log n$. The same authors, in \cite{BSS2}, characterize the behavior of VRRWs with sublinear nondecreasing weight functions, thanks
to some index (which is an integer that depends on the weight function) and they prove an a.s. lower bound for the size of the localization set (which is not sharp), depending on
this index. They propose a conjecture with better bounds, which they prove with positive probability. In particular,
they exhibit walks that localize on arbitrarily large intervals, as we do in the present paper. The major difference between the behavior of Stuck walks and VRRWs is that in the case of VRRWs on $\mathbb{Z}$, only $3$ vertices are visited during a positive proportion of the total time, and the other infinitely often visited sites are seldom visited, no matter how many they are.


\section{Sketch of the proof of Theorem \texorpdfstring{\protect\ref{L2L3}}{1.4}} \label{mainproof}


Let us describe the techniques used to prove Theorem \ref{L2L3}. First, as in \cite{ETW}, we compare the local times of
the walk to the solutions of a linear system.
This linear system is not easy to handle generally but we prove additional results on the solutions of this system, in
order to generalize some results of \cite{ETW} and to emphasize, through trigonometric identities, instability
properties of some vertices that will force the walk to localize on one of their sides.
Finally, we adapt a variant of Rubin's construction, introduced in \cite{PTsurvey}, to a class of
\emph{nonmonotonic} weight functions.

Fix $L\ge1$, and assume that $\al\in(\al_{L+1},\al_L)$.
Let us first state the following proposition, proved in Section~\ref{finiterange}.

\begin{prop}\label{proprang}
The walk almost surely visits finitely many sites, that is, $|R|<+\infty$ a.s.
\end{prop}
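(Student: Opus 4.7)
The plan is to rule out the walker escaping to $\pm\infty$. By the symmetry of the dynamics under $X\mapsto -X$, it is enough to show
\[
\Pb\bigl(\sup_k X_k=+\infty\bigr)=0,
\]
which combined with the analogous bound on $\inf_k X_k$ forces the range of $X$ into a random bounded interval, so that $|R|<\infty$ a.s.

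Set $\tau_M:=\inf\{k:X_k=M\}$. The core of the argument is a uniform trapping estimate: there exists $c=c(\al,\beta)>0$ such that, on $\{\tau_M<\infty\}$ and uniformly in $M\ge1$ and in $\mathcal{F}_{\tau_M}$,
\[
\Pb\bigl(\tau_{M+1}=+\infty\,\big|\,\mathcal{F}_{\tau_M}\bigr)\ge c.
\]
Taking expectations gives $\Pb(\tau_{M+1}<\infty)\le(1-c)\Pb(\tau_M<\infty)$, and iterating yields $\Pb(\tau_M<\infty)\le(1-c)^M\to 0$. Since $\{\sup_k X_k=+\infty\}=\bigcap_M\{\tau_M<\infty\}$ is a decreasing intersection, we conclude $\sup_k X_k<\infty$ a.s.

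To establish the uniform trapping estimate I would exploit the fresh-edge configuration at $\tau_M$: we have $l_{\tau_M}(M+1)=l_{\tau_M}(M+2)=0$ while $l_{\tau_M}(M)=1$ and $l_{\tau_M}(M-1)\ge1$, so $\Delta_{\tau_M}=1-\al\,l_{\tau_M}(M-1)\le 1-\al$. A first observation is that an uninterrupted rightward rush starting from $\tau_M$ has probability zero, since each successive step of such a rush is accepted with conditional probability at most some $p<1$, independent of $M$. Hence the walker a.s.\ turns around at some $M'\ge M$, and just after the turnaround the freshly incremented local time on the edge $\{M'-1,M'\}$ makes $\Delta(M'-1)$ strongly negative, pushing the walker further to the left. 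As the walker then oscillates near $M'$, the local times on the edges immediately to the left of $M'$ grow in a way that, heuristically, self-reinforces the trap.

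Converting this intuition into a uniform lower bound $c>0$ is the main task, and the approach I would take is the variant of Rubin's time-line construction alluded to in the introduction. To each edge one attaches an independent Poisson clock whose rate depends on the current value of $\Delta$, and the walker traverses the edge that rings first; the event $\{\tau_{M+1}<\infty\}$ then becomes a first-passage problem for these clocks, which can be controlled by comparison with simpler reference processes. The principal obstacle, and the reason this requires more than the standard Rubin construction for vertex-reinforced walks, is that the weight $e^{\pm\beta\Delta_k}$ is \emph{non-monotone} in any single local time—$\Delta$ is a linear combination of four consecutive local times with coefficients $-\al,+1,-1,+\al$ mixing attractive and repulsive contributions—so the coupling must track these four quantities jointly. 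The hypothesis $\al>1/3$ enters precisely through the collective behavior of this linear combination, tipping the balance in favor of trapping over escape.
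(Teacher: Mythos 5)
Your skeleton --- reduce by symmetry to $\sup_k X_k<\infty$, establish a uniform trapping probability $c>0$, iterate to get $\Pb(\tau_M<\infty)\le(1-c)^M$ --- is sound and matches the paper's strategy in spirit (the paper runs the same iteration per block of $L+2$ sites rather than per site, showing $\Pb(x-1\in R\mid x+L+1\in R)<1-p$ uniformly in $x$, and it obtains the conditional bound only on a favorably constructed sub-event of uniformly positive probability rather than uniformly over all histories). The gap is that the trapping estimate itself, which is the entire content of the proposition, is left unproved, and the tool you nominate for it is not the right one. Rubin's time-line construction is used in this paper only to eliminate the unstable candidates $|R'|\ge L+4$, via a one-parameter monotone coupling showing that the coincidence $T_y^+=T_y^-<\infty$ has probability zero; it is a device for proving that events are null, not for producing a uniform positive lower bound on a trapping probability, and your sketch gives no indication of how it would do so here.

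The ingredient actually required, and absent from your proposal, is the deterministic linear-system analysis of Proposition \ref{gensyst} and Corollary \ref{corgensyst}: if the normalized local times on a window of $L+2$ consecutive sites have small interior streams $d_1,\dots,d_L$, then the normalized stream at the left endpoint is at least $d_0(L)-\sum_k c_{L+1-k}d_k$ with $d_0(L)>0$ precisely because $\al<\al_L$. This is where the value of $\al$ and the window length enter quantitatively; your heuristic uses neither. Concretely, the paper forces the walker to accumulate local time $2m-1$ in $[x,x+L+1]$ before first stepping left of $x+L$ (an event of probability at least $\eps_1(m)$ uniformly in $x$), then shows that, barring ``upstream jumps'' of relative size $\eps_2$ (whose total probability is a geometric tail starting at $2m-1$), any visit to $x$ with $l(x)=0$ occurs with normalized stream at $x$ at least $d_0(L)/2>0$, so the jump to $x-1$ also has probability a geometric tail in $2m-1$; taking $m$ large yields the uniform bound. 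Your substitute mechanism --- that one turnaround at $M'$ makes $\Delta(M'-1)$ ``strongly negative'' and the trap then self-reinforces --- does not hold as stated: a single crossing changes any $\Delta(j)$ by at most $1+\al$, and controlling the subsequent oscillations is exactly the step that needs the linear-system input. As written, the proposal restates the proposition rather than proving it.
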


Note that this last proposition discards the possibility of transience of the walk, that is, $R'\neq\varnothing$ a.s.

Knowing from \cite{ETW} that $\llvert  R'\rrvert  \ge L+2$ a.s., our next goal is to prove that $\llvert  R'\rrvert  \le L+3$.
Then, let $x\in\mathbb{Z}$ be the leftmost infinitely visited site, and let $K\in\mathbb{N}\cup\{0\}$ be the number of
\emph{interior} lattice sites of $R'$, that is,
\begin{eqnarray*}
x&:=&\inf \bigl\{y\in\mathbb{Z}\dvtx y\in R' \bigr\},
\\
K&:=& \bigl\llvert R' \bigr\rrvert -2.
\end{eqnarray*}

Note that $x$ and $K$ are random variables but that they can take only countably many values, so we can fix
$x\in\mathbb{Z}$ and $K\in\mathbb{N}\cup\{0\}$ and work on the event $ \{R'=\{x,x+1,\ldots,x+K+1\} \}=\{x=\inf
R'\}\cap\{\llvert  R'\rrvert  =K+2\}$.

The following result, proved in Section~\ref{aswei}, is crucial.

\begin{prop}\label{lemdeltak1}
Let $x\in\mathbb{Z}$ and let $K\in\mathbb{N}$. Almost surely, we have
\[
\bigl\{R'=\{x,\ldots,x+K+1\} \bigr\} \subset \bigcap
_{j=x+1}^{x+K} \biggl\{\lim_{k\rightarrow+\infty}
\frac{\Delta_k(j)}{k}=0 \biggr\}.
\]
\end{prop}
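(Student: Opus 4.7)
My plan is to fix an interior site $j \in \{x+1, \dots, x+K\}$ on the event $\{R' = \{x, \dots, x+K+1\}\}$ and prove $\Delta_k(j)/k \to 0$ by combining a martingale identity at $j$ with the bounded-increment structure of $k \mapsto \Delta_k(j)$. Since both $j-1$ and $j+1$ lie in $R'$, the walker visits $j$ infinitely often and makes infinitely many left- and right-exits from $j$, which is the ingredient that will provide the eventual contradiction whenever the asymptotic drift at $j$ fails to vanish.

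Write $\tau_i$ for the time of the $i$-th visit of $X$ to $j$. The centered indicators $\1_{\{X_{\tau_i + 1} = j+1\}} - p_+^{(i)}$, with $p_+^{(i)} := e^{\beta \Delta_{\tau_i}(j)}/(e^{\beta \Delta_{\tau_i}(j)} + e^{-\beta \Delta_{\tau_i}(j)})$, form a bounded martingale-difference sequence, so Azuma-Hoeffding gives a.s.\ convergence of the empirical averages to $0$. Coupling this with the nearest-neighbor edge-crossing balance $l_{\tau_n}(j+1) = 2 R_n + O(1)$ and $l_{\tau_n}(j) = 2 L_n + O(1)$, where $R_n$ and $L_n$ count right- and left-exits from $j$ during the first $n$ visits, and using $2 p_+^{(i)} - 1 = \tanh(\beta \Delta_{\tau_i}(j))$, I obtain the key identity
\begin{equation*}
l_k(j+1) - l_k(j) = 2 \sum_{i\,:\,\tau_i \leq k} \tanh(\beta \Delta_{\tau_i}(j)) + o(k) \quad \text{a.s.}
\end{equation*}

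I then argue by contradiction. Assume that $\limsup_k \Delta_k(j)/k = \delta > 0$ on a positive-probability set (the case $\liminf_k \Delta_k(j)/k < 0$ is symmetric). The per-step increments $|\Delta_{k+1}(j) - \Delta_k(j)|$ are bounded by $C := \max(1, \alpha)$, so there exist a subsequence $(k_n)$ and $c = c(\delta, \alpha) > 0$ with $\Delta_k(j) \geq \delta k_n / 3$ throughout each window $[k_n, (1+c) k_n]$. On such a window, at each visit to $j$ the conditional probability of a left-step is at most $e^{-2 \beta \delta k_n / 3}$; a Borel-Cantelli argument along a sparse choice such as $k_n = 2^n$ then shows that almost surely, for all large $n$, the walker makes no left-step from $j$ during the window. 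Consequently, the edge $\{j-1, j\}$ is crossed at most once per window (and only in the direction $j-1 \to j$), so $l_{(1+c) k_n}(j) - l_{k_n}(j) = O(1)$.

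Feeding this arrest of $l_k(j)$ into the identity above, together with the conservation law $l_k(j) + l_k(j+1) = 2 Z_k(j) + O(1)$, forces the walker's empirical density to concentrate away from $j$, contradicting $j \in R'$. The main obstacle lies in the rigorous propagation of this local arrest to a global contradiction, in particular in ruling out the oscillatory regime where $\limsup_k \Delta_k(j)/k > 0 > \liminf_k \Delta_k(j)/k$: this requires a joint analysis of the identity above applied at each neighboring interior site, together with the constraints $\sum_i l_k(i) = k$ and $|R'| = K + 2$, to derive a contradiction with the assumed localization on $R' = \{x, \dots, x+K+1\}$.
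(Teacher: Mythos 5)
Your preliminaries are sound: the uniform Borel--Cantelli statement (at large times the walker cannot step against a stream of order $\eps k$) is exactly Lemma \ref{stuck}(i) of the paper, and your Azuma identity $l_k(j+1)-l_k(j)=2\sum_{i:\tau_i\le k}\tanh(\beta\Delta_{\tau_i}(j))+o(k)$ is correct. But the proof has a genuine gap precisely where you acknowledge "the main obstacle". First, the contradiction you propose is not one: $j\in R'$ only means $j$ is visited infinitely often; it does not force $l_k(j)$ or $Z_k(j)$ to grow linearly, so "the empirical density concentrates away from $j$" does not contradict $j\in R'$ (positive density of every site of $R'$ is a \emph{consequence} of the proposition being proved, via Propositions \ref{systpos} and \ref{system}, not an a priori fact). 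Second, the single-site window argument only yields that $l_k(j)$ and $l_k(j-1)$ freeze while the walk is confined to the right of $j$; during that confinement $\Delta_k(j)$ changes by $-[l_k(j+1)-l_{k_n}(j+1)]+\al[l_k(j+2)-l_{k_n}(j+2)]$, whose sign and size cannot be determined from information at the single site $j$. All one can extract from your argument is that the stream must drop below $\eps k$ at each of the infinitely many left-exits from $j$ (forced by $j-1\in R'$), i.e.\ $\liminf_k\Delta_k(j)/k\le 0\le\limsup_k\Delta_k(j)/k$; ruling out the oscillatory regime is the entire content of the proposition, and you defer it to an unspecified "joint analysis".

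The missing ingredient is structural, not probabilistic: when the walk is confined to a subinterval $\{i_1,\dots,i_2+1\}$ over a time span, the normalized local-time increments form an approximate solution of the linear system $(E_{i_2-i_1})$, and Propositions \ref{systpos} and \ref{system} give a \emph{definite sign and a uniform lower bound} $d>0$ for the resulting drift of the boundary streams, with the sign depending on whether $i_2-i_1\le L-1$ or $i_2-i_1\ge L$ (this is Lemma \ref{stuck2}). The paper then runs a double induction over the interior sites, with the hierarchy of thresholds $3\eps M^{j(j-1)/2}$, using these signed drifts to show that whenever the stream at $x+j+1$ tries to fall (or rise) past the next threshold, the induced confinement drives it back, a contradiction. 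Without quantitative control of how the boundary stream of a confinement interval evolves as a function of the interval's length relative to $L$, no single-site martingale identity can distinguish the stable intervals from the unstable ones, so the argument cannot close as written.
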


The last proposition is a generalization of Proposition~2 of \cite{ETW}, which would only give this result for
$K\le L$, which is not sufficient. Hence, Proposition \ref{lemdeltak1} requires a more technical proof, which needs, in
particular, results of Section~\ref{large}.

From Proposition \ref{lemdeltak1}, we know that the normalized local times eventually approach the set of solutions
$(l_0,\ldots,l_{K+2})$ of the linear system defined by
\begin{equation}
d_1=d_2=\cdots=d_K=0\quad\mbox{and}\quad
\sum_{j=1}^{K+1}l_j=1,\label{tag}
\end{equation}
where, for all $j\in\{1,\ldots,K\}$, $d_j=-\al l_{j-1}+l_j-l_{j+1}+\al l_{j+2}$, and with the extra conditions
$l_0=l_{K+2}=0$.

So, all the information we can get about this system gives us information about the asymptotic behavior of the local
times. The sole purpose of Section~\ref{trigogo} is to study this generalized Fibonacci sequence and its solutions.
In fact, we also use some properties of the solutions of this system to prove Propositions \ref{proprang}~and~\ref{lemdeltak1}.

Let us roughly describe the solutions of such a system, which is detailed in Section~\ref{trigogo}. First, define
$d_0=-l_1+\al l_2$ and $d_{K+1}=-\al l_{K}+l_{K+1}$.
\begin{itemize}
\item If $K<L$, then the solution to the system is unique, all the $l_j$'s are positive, $d_0$ is negative and, by symmetry,
$d_{K+1}=-d_0>0$. As $\Delta_k(x)/k$ and $\Delta_k(x+K+1)/k$, respectively, approach $d_0$ and $d_{K+1}$ as $k$ goes to
infinity, it gives us the intuition that we cannot have $\llvert  R'\rrvert  <L+2$, since otherwise the local streams at the boundaries
would strongly push the walker out of this interval.

\item If $K=L$, then the solution is still unique, with all the $l_j$'s positive, and we have $d_0=-d_{L+1}>0$. So, we guess
that $\llvert  R'\rrvert  =L+2$ is a good candidate, as the local streams on the boundaries would keep the walker inside the interval.

\item If $K=L+1$, the unique solution is similar to the previous one, which makes $\llvert  R'\rrvert  =L+3$ another good candidate.

\item Otherwise, if $K>L+1$, the solution may not be unique, and we cannot determine {a priori} the
sign of $d_0$ nor $d_{K+1}$. Moreover, as it is noticed in \cite{ETW} (see also Remark \ref{remsyst}), we could find
many $K$'s for which the set of solutions to the system is such that all the $l_j$'s are nonnegative, $d_0>0$ and
$d_{K+1}<0$, that is, the local streams on the boundaries push the walker inward.
In other words, we could find many good candidates for the size of $R'$. The goal is then to exclude these larger
values of $K$.
\end{itemize}

Recall that $L\ge1$ and that $\al\in(\al_{L+1},\al_L)$, so that $\al>1/3$. Let $\om\in(0,\pi)$ be the unique real number such that
\[
\cos(\om)=\frac{1-\al}{2\al}.
\]
The following corollary is a bit less general than Proposition~\ref{system} which we state and prove in Section~\ref{large}. As we have already noticed, the linear system is quite
difficult to handle when $K$ is large, but we can prove some useful results under some convenient assumptions.

\begin{coro}\label{systneg}
Assume that $\al\in(\al_{L+1},\al_L)$ and $K\geq L$. If $(l_0,\ldots,l_{K+2})$ satisfies the previous system \eqref{tag} and if
$l_1,\ldots,l_{K+1}\geq0$, then:

\begin{longlist}[(ii)]
\item[(i)] $d_0\ge c(K)$ and $d_{K+1}\leq -c(K)$, where $c(K)$ is a positive constant depending only on $\al$ and $K$;

\item[(ii)] for the same positive constant $c(K)$, we have
\begin{eqnarray*}
l_{L+2}-\al l_{L+1}&=&-\frac{\sin((({L+2})/{2})\om)}{\sin(({L}/{2})\om)}l_1+2\al
\frac{\cos({\om}/{2})\sin(((L+3)/{2})\om)}{\sin(({L}/{2})/\om)}l_{L+1}
\\
&\le& -c(K).
\end{eqnarray*}
\end{longlist}
\end{coro}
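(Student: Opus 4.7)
The plan is to read both parts of the corollary off the closed-form expression for the solution, supplied by Proposition~\ref{system}. Because the linear recurrence $d_j = 0$ has characteristic polynomial $\alpha x^3 - x^2 + x - \alpha = (x-1)(\alpha x^2 + (\alpha-1)x + \alpha)$ whose quadratic factor has roots $e^{\pm i\om}$ by the very definition of $\om$, the two Dirichlet conditions $l_0 = l_{K+2} = 0$ determine the sequence up to a scalar as
\[
l_j \;=\; C \sin(j\om/2)\,\sin\bigl((K+2-j)\om/2\bigr), \qquad j=0,\ldots,K+2;
\]
the hypothesis $l_1,\ldots,l_{K+1} \ge 0$ fixes the sign of $C$, and the normalization $\sum_j l_j = 1$ fixes its magnitude. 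I will use this formula throughout.

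For (i), I compute $d_0 = -l_1 + \al l_2$ directly. Expanding $\sin(\om) = 2\sin(\om/2)\cos(\om/2)$ in $l_2$ and applying the product-to-sum identities, the defining relation $\al(1+2\cos\om) = 1$ collapses the computation to
\[
d_0 \;=\; -\al C \sin(\om/2)\sin\bigl((K+3)\om/2\bigr).
\]
The symmetry $l_j = l_{K+2-j}$ built into the closed form gives $l_K = l_2$ and $l_{K+1} = l_1$, so $d_{K+1} = -d_0$. Finally, $\om\in(2\pi/(L+3), 2\pi/(L+2))$ and $K\ge L$ together force $(K+3)\om/2\in(\pi, 2\pi)$, hence $\sin((K+3)\om/2) < 0$, which, combined with the sign of $C$ (forced by $l_1\ge 0$), yields $d_0 > 0$. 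The resulting explicit value of $d_0$ provides the constant $c(K)$, and $d_{K+1} = -d_0 \le -c(K)$.

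For (ii), I would first establish the stated linear identity as an algebraic fact about any sequence satisfying $l_0 = 0$ and $d_1 = \dots = d_L = 0$: the solution space is $2$-dimensional, generically parametrized by $(l_1, l_{L+1})$, and substituting the closed form into both sides of the claimed identity reduces its verification to the single trigonometric statement
\[
\sin(L\om/2)\sin((K{-}L)\om/2) + \sin(\om/2)\sin((K{+}1)\om/2) = \sin((L{+}1)\om/2)\sin((K{-}L{+}1)\om/2),
\]
which follows from the product-to-sum formula (both sides equal $\tfrac12[\cos((2L-K)\om/2) - \cos((K+2)\om/2)]$), together with the reduction $\sin(L\om/2) + 2\cos(\om/2)\sin((L+3)\om/2) = \sin((L+2)\om/2)/\al$ via $\al(1+2\cos\om)=1$. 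With the identity in hand, sign-tracking finishes the job: $\sin(L\om/2) > 0$ and $\sin((L+2)\om/2) > 0$ (both arguments lie in $(0,\pi)$) make the coefficient of $l_1$ negative, while $\sin((L+3)\om/2) < 0$ (as in~(i)) makes the coefficient of $l_{L+1}$ negative. Since $l_1,l_{L+1}\ge 0$ and cannot both vanish in view of $\sum_j l_j = 1$ and the closed form, we obtain $l_{L+2}-\al l_{L+1}\le -c(K)$ for a positive constant, which can be taken equal to the one in~(i) upon passing to the minimum.

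The main obstacle is the trigonometric identity behind~(ii): its verification requires several rounds of product-to-sum manipulations combined with the substitution $\al(1+2\cos\om)=1$ applied at precisely the right moments. Everything else is routine sign bookkeeping in the regime $\om\in(2\pi/(L+3),2\pi/(L+2))$.
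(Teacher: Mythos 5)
Your argument founders on the assumption that every solution of \eqref{tag} with $K\ge L$ is, up to a scalar, the closed form $l_j=C\sin(j\om/2)\sin((K+2-j)\om/2)$ — but this is exactly the difficulty the corollary is designed to overcome. Two things go wrong. First, when $(K+2)\om\in2\pi\mathbb{Z}$ the conditions $l_0=0$ and $l_{K+2}=0$ coincide, the solution set of \eqref{tag} is a one-parameter affine family (this is Remark \ref{premsyst}), and the non-negative solutions need not be of your closed form at all; your proof says nothing about them, and the constant $c(K)$, which must be uniform over this continuum, is never produced (``$l_1$ and $l_{L+1}$ cannot both vanish'' yields strict positivity pointwise, not a uniform lower bound). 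Second, and independently, the sign bookkeeping in your part (i) fails for large $K$: from $\om\in(2\pi/(L+3),2\pi/(L+2))$ one can only conclude $(K+3)\om/2<2\pi$ when roughly $K\le 2L+1$; for larger $K$ the angle $(K+3)\om/2$ wraps around the circle, $\sin((K+3)\om/2)$ has no definite sign, and neither does $\sin((K+1)\om/2)$ appearing in $l_1$, so neither the sign of $C$ nor the positivity of $d_0$ follows. Indeed, for $K\ge L+2$ the products $\sin(j\om/2)\sin((K+2-j)\om/2)$ already change sign in $j$, which is why the hypothesis $l_1,\dots,l_{K+1}\ge0$ is a genuine restriction on which $(\al,K)$ admit solutions, rather than a device for normalizing $C$.

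The paper avoids the closed form entirely in the regime $K\ge L$. For (i) it first proves $\sum_{k=1}^{L+1}l_k\ge c>0$ by contradiction (if $l_1,l_2<c$, the recurrence together with non-negativity forces $l_j\le 6^{j-1}c$ for all $j$, contradicting $\sum_j l_j=1$ for $c$ small), and then applies the affine $K=L$ analysis (Corollary \ref{corgensyst}) to the rescaled first $L+2$ coordinates, obtaining $d_0\ge d_0(L)\cdot c$. For (ii), the identity \eqref{y} is derived from the general two-parameter representation \eqref{lj3} with $j_0=L+1$ — valid for every solution, unique or not — and the quantitative bound comes from showing $l_1+l_{L+1}\ge\tilde c$ by the same kind of contradiction argument. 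Your coefficient sign checks for (ii) (positivity of $\sin(L\om/2)$ and $\sin((L+2)\om/2)$, negativity of $\sin((L+3)\om/2)$) are correct, since those angles involve only $L$; but without a uniform lower bound on $l_1+l_{L+1}$ and without any treatment of the non-unique case, the proof as proposed does not establish the corollary.
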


The first point is used to prove Proposition \ref{lemdeltak1}, whereas the second point is used to prove the following
series convergence.

\begin{lem}\label{somfinie}
Let $x,K\in\mathbb{Z}$ such that $K\ge L$. Then, almost surely, for any $a>0$,
\[
\bigl\{R'=\{x,\ldots,x+K+1\} \bigr\}\subset \Biggl\{\sum
_{k=1}^{+\infty} e^{a\beta[l_k(x+L+2)-\al
l_k(x+L+1)]}<+\infty \Biggr\}.
\]
\end{lem}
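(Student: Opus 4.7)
The plan is to show that, on the event $\{R'=\{x,\dots,x+K+1\}\}$, the quantity $l_k(x+L+2)-\alpha l_k(x+L+1)$ diverges linearly to $-\infty$, so that the exponential in the series decays geometrically and summability becomes automatic. The key inputs are Proposition \ref{lemdeltak1}, which forces the normalized local times to approach the solution set of system \eqref{tag}, together with Corollary \ref{systneg}$(ii)$, which bounds $l_{L+2}-\alpha l_{L+1}$ uniformly away from zero on that solution set.

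Concretely, I would restrict attention to the event $\{R'=\{x,\dots,x+K+1\}\}$. On this event every edge outside the interval $[x,x+K+1]$ is crossed only finitely many times, so $l_k(x)/k$ and $l_k(x+K+2)/k$ tend to $0$, while $\sum_{j=1}^{K+1}l_k(x+j)/k\to 1$. The sequence of vectors $(l_k(x+j)/k)_{0\le j\le K+2}$ is bounded, and along any convergent subsequence its limit $(\ell_0,\dots,\ell_{K+2})$ satisfies $\ell_0=\ell_{K+2}=0$, $\ell_j\ge 0$, and $\sum_{j=1}^{K+1}\ell_j=1$. Since $\Delta_k(x+j)$ is a fixed linear combination of the $l_k(\cdot)$'s, Proposition \ref{lemdeltak1} forces $-\alpha\ell_{j-1}+\ell_j-\ell_{j+1}+\alpha\ell_{j+2}=0$ for each $j=1,\dots,K$. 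Thus the limit is a nonnegative normalized solution of \eqref{tag}, and Corollary \ref{systneg}$(ii)$ applies, giving $\ell_{L+2}-\alpha\ell_{L+1}\le -c(K)$. Because every subsequential limit obeys the same bound, a standard compactness argument yields
\[
\limsup_{k\to\infty}\frac{l_k(x+L+2)-\alpha l_k(x+L+1)}{k}\le -c(K).
\]

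Consequently, on our event, there exists a random integer $k_0$ such that $l_k(x+L+2)-\alpha l_k(x+L+1)\le -\tfrac{c(K)}{2}k$ for every $k\ge k_0$, and hence
\[
\sum_{k\ge k_0} e^{a\beta[l_k(x+L+2)-\alpha l_k(x+L+1)]}\le \sum_{k\ge k_0} e^{-\frac{a\beta c(K)}{2}k}<\infty,
\]
which is the desired conclusion. There is no serious obstacle: the one point to check carefully is that every subsequential limit of the normalized local-time vector is genuinely a nonnegative normalized solution of \eqref{tag}, but this reduces immediately to the continuity of $\Delta_k(\cdot)$ as a linear functional of the local times, combined with Proposition \ref{lemdeltak1}. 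The whole argument is thus a compactness translation of the uniform bound in Corollary \ref{systneg}$(ii)$ into a pathwise linear decay of $l_k(x+L+2)-\alpha l_k(x+L+1)$.
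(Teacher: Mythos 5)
Your argument is correct and rests on the same pillars as the paper's: Proposition~\ref{lemdeltak1} to force $\Delta_k(j)/k\to 0$ inside the interval, and Corollary~\ref{systneg}$(ii)$ to get a uniform negative bound on $l_{L+2}-\alpha l_{L+1}$ over the admissible solution set, which then yields linear decay of $l_k(x+L+2)-\alpha l_k(x+L+1)$ and hence geometric summability.

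The only genuine difference is how the limiting linear system is reached. The paper fixes a (random) time $N$ after which the walk is confined and works with the normalized \emph{increments} $\widetilde{l}_j=(l_k(x+j)-l_N(x+j))/(k-N)$: these satisfy $\widetilde{l}_0=\widetilde{l}_{K+2}=0$ and $\sum\widetilde{l}_j=1$ \emph{exactly} for every finite $k\ge 2N$, leaving only the $\widetilde{d}_j$'s as small perturbations, so a direct perturbation bound on the linear system applies at each finite $k$. You instead work with $l_k(x+j)/k$, for which the boundary and normalization constraints only hold in the limit, and recover the same conclusion by extracting subsequential limits and applying a $\limsup$ argument. Both routes are valid and ultimately use the same compactness/continuity of the constraint set; the paper's increment trick is arguably cleaner in that it produces an explicit finite-time bound (with explicit time $2N$) without passing to subsequences, which is convenient when one wants to exhibit the summability constant concretely.
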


Its proof is simple but needs several technical details to be rigorous, thus it is set forth in Section~\ref{aswei2}: we use that, on the event $\{R'=\{x,\ldots,x+K+1\}\}$,  the vector of local times $(l_k(x),\ldots,l_k(K+2))/k$ approaches, as $k$ goes to infinity, the set of solutions to the
linear system \eqref{tag} introduced in the previous paragraph.

In order to prove Theorem \ref{L2L3}, we make use of an embedding of the original walk into a continuous-time process, obtained via a time-line construction similar to the one often used for Markov chains. It was initially proposed by Herman Rubin, and used in the papers of Davis~\cite{Davis} and Sellke~\cite{Sellke}, in the context of edge-reinforced random walks. We make use of a variant of this construction, initially introduced by Tarr\`es~\cite{PTsurvey} for the study of VRRW with nondecreasing \emph{weight functions}, which allows powerful couplings in the one-dimensional case, in order to prove nonconvergence toward some unstable limit sets.

However, the nonmonotonic setting studied here does not satisfy the conditions of the results in \cite{PTsurvey}, and we therefore extend the techniques to that case: this will enable us to eliminate the large unstable intervals as candidates for the localization set.

Let us take advantage of some different ways to write the probability for the walker to jump on his left when he is on a
certain vertex:
\begin{eqnarray}\label{ec3}
&& \Pb(X_{k+1}=X_k-1|\Fk)\nonumber
\\[-2pt]
&&\qquad = \frac{1}{1+e^{2\beta\Delta_k}}
\\[-2pt]
&&\qquad = \frac{e^{2\beta [-l_k(X_k)+\alpha l_k(X_k-1) ]}}{e^{2\beta [-l_k(X_k)+\alpha
l_k(X_k-1) ]}+e^{2\beta [-l_k(X_k+1)+\alpha l_k(X_k+2) ]}}.\nonumber
\end{eqnarray}

It is worth noticing that \eqref{ec3} consists of two terms: one depending only on the local times on edges on the
left of the considered vertex and the second one depending only on the local times on edges on the right of it. Hence,
after one excursion on the right, or on the left, of this vertex, only one of the two terms changes. Note
that we still do not have any monotonicity.

Before making this construction more precise, let us introduce some definitions. First, for all $y\in\mathbb{Z}$ and
$k\in\mathbb{N}$, define the number of times the \emph{oriented} edge \mbox{$(y,y\pm1)$} has been crossed, up to time $k$:
\[
N_k(y,y\pm1):=\sum_{n=0}^{k-1}
\1_{\{X_n=y,X_{n+1}=y\pm1\}},
\]
and notice that if $X_k=y$ then we have
\begin{eqnarray*}
Z_k(y\pm1)&=&\frac{l_k(y+((1\pm1)/2))+l_k(y+((1\pm3)/2)-\1_{\{y\pm1=0\}}}{2},
\\
N_k(y,y\pm1)&=&\frac{l_k(y+((1\pm1)/2))-\1_{\{\pm y<0\}}}{2}.
\end{eqnarray*}
Define two functions $f^+$ and $f^-$ from $\mathbb{Z}\times(\mathbb{N}\cup\{0\})$ in $\mathbb{R}^+$, and a function $w$
from $\mathbb{N}\cup\{0\}$ to $\mathbb{R}^+$, such that, for all $n\in\mathbb{N}\cup\{0\}$ and $y\in\mathbb{Z}$,
\begin{eqnarray}
f^{\pm}(y,n)&:=&\exp \bigl(2\beta \bigl[2(1+\al)n-\al
\1_{\{y\pm1=0\}}+(1+\al)\1_{\{\pm y<0\}
} \bigr] \bigr),\label{deffpm}
\\[-2pt]
w(n)&:=&\exp (4\beta\al n ).\label{defw}
\end{eqnarray}

Then, if $X_k=y$, we have
\begin{eqnarray}
e^{2\beta [-l_k(y+1)+\alpha l_k(y+2) ]}&=& \frac{w(Z_k(y+1))}{f^+(y,N_k(y,y+1))}, \label{egrub1}
\\[-2pt]
e^{2\beta [-l_k(y)+\alpha l_k(y-1) ]}&=& \frac{w(Z_k(y-1))}{f^-(y,N_k(y,y-1))},\label{egrub2}
\end{eqnarray}
which enables us to rewrite  \eqref{ec3} as
\begin{eqnarray}\label{probarubin}
&&\Pb(X_{k+1}=X_k-1|\Fk)
\nonumber
\\[-2pt]
&&\qquad = \frac{w(Z_k(X_k -1))}{f^-(X_k,N_k(X_k,X_k-1))}
\\[-2pt]
&&\quad\qquad{}\Big/
\biggl({\frac{w(Z_k(X_k
-1))}{f^-(X_k,N_k(X_k,X_k-1))}+ \frac{w(Z_k(X_k+1))}{f^+(X_k,N_k(X_k,X_k+1))}}\biggr). \nonumber
\end{eqnarray}

Now, we are going to use Rubin's construction, which is the following. Recall that, in our case, the function $w$
is defined as in \eqref{defw}, but this construction can be done for any nondecreasing weight function. The way we
present this construction is very similar to the one in \cite{BSS1}. We will embed the walk into a continuous-time
process, involving time-lines corresponding to
sequences of clocks (see Figure~\ref{etwdessin1}). More precisely, fix a collection of positive real numbers:
\[
\xi:=\bigl(\xi^{\pm}_{k}(y),y\in\mathbb{Z},k\ge0\bigr)\in
\mathbb{R}_+^{\mathbb{N}}.
\]
We call $\Mc$ the function which maps a collection of positive real numbers $\xi$ to a continuous-time walk
$\widetilde{X}=(\widetilde{X}_t)_{t\in\mathbb{R}_+}=:\Mc(\xi)$ on $\mathbb{Z}$, constructed as follows.

\begin{figure}[t]

\includegraphics{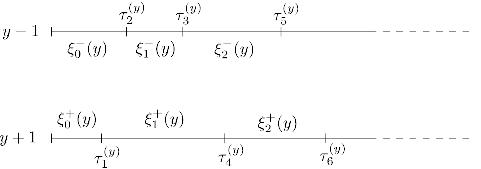}

   \caption{Time-lines around a vertex $y$. The times $\tau^{(y)}_k$ are times of jumps  from $y$ to $y\pm 1$.}
\label{etwdessin1}
\end{figure}

For each $y\in\mathbb{Z}$ and $k\in\mathbb{N}\cup\{0\}$, the value $\xi_k^+(y)$ [resp., $\xi_k^-(y)$] will be related to
the duration of a clock attached to the oriented edge $(y,y+1)$ [resp., $(y,y-1)$], and given the collection $\xi$, the
evolution of the walk is deterministic, created as follows:
\begin{itemize}
\item Set $\widetilde{X}_0=0$ and attach two clocks to the oriented edges $(0,-1)$ and $(0,1)$ ringing, respectively, at
times $\xi_0^-(0)/w(0)$ and $\xi_0^+(0)/w(0)$.
\item At time $\tau_1:= (\xi_0^-(0)/w(0) )\wedge (\xi_0^+(0)/w(0) )$, one of the alarms rings, then
the walker crosses instantaneously the corresponding edge and both clocks are stopped. Then we have set
$\widetilde{X}_{\tau_1}=\pm1$ depending on which clock has rung first. If both clocks ring at the same time, then
$\widetilde{X}$ stays at $0$ forever and we say that the construction fails.
\end{itemize}
Now, assume that we have constructed $\widetilde{X}$ up to some time $t>0$ at which time the process makes a right jump
from $y-1$ to $y$, for some vertex $y\in\mathbb{Z}$. Denote by $k$ the number of jumps from $y$ to $y-1$ and by $n$ the
number of visits to $y-1$ before time $t$. We continue the construction according to the following procedure:
\begin{itemize}
  \item Start a new clock attached to the oriented edge $(y,y-1)$, which will ring after a time $\xi_k^-(y)/w(n)$.
\item If the walker has already been in $y$ previously, then restart the clock attached to the oriented edge $(y,y+1)$
that has been stopped (without ringing) the last time the process left $y$. Otherwise, start a new clock for $(y,y+1)$
which will ring after a time $\xi_0^+(y)/w(0)$.
\item When one of the alarms rings, the walker crosses instantaneously the corresponding edge, and both clocks are
stopped. If both alarms ring simultaneously, then $\widetilde{X}$ stays at $y$ forever and we say that the construction
fails.
\end{itemize}
We follow the symmetric procedure when the process jumps from $y+1$ to $y$.

Let us naturally adopt the notations $\widetilde{\mathcal{F}}_t$, $\tilde{l}_t(y)$, $\widetilde{N}_t(y,y\pm1)$,
$\widetilde{Z}_t(y)$, $\widetilde{R}'$ inherited from $\mathcal{F}_k$, $l_k(y)$, $N_k(y,y\pm1)$, $Z_k(y)$, $R'$.

Let us define $\tau_0:=0$ and $\tau_k$ the time\vspace*{1pt} of the $k$th jump of $\widetilde{X}$. Then define the discrete-time
\emph{embedded walk} $(\widetilde{X}_{\tau_k})_k$.

Recalling the definitions \eqref{deffpm} of $f^\pm$,
\eqref{defw} of $w$, and the transition probability \eqref{probarubin}, we prove the following proposition.

\begin{prop} \label{rubexp}
Choose $\xi$ to be such that, for all $y\in\mathbb{Z}$ and $k\in\mathbb{N}\cup\{0\}$, $\xi_k^\pm(y)$ are exponential
random variables, independent from each other, and with mean $f^\pm(y,k)$. Define\vspace*{1pt} the continuous-time walk
$\widetilde{X}:=\Mc(\xi)$ and its embedded walk $(\widetilde{X}_{\tau_k})$.

Then\vspace*{1pt} the construction of $\widetilde{X}$ does not fail with probability $1$. Moreover, the processes
$(\widetilde{X}_{\tau_k})_{k\ge0}$ and $(X_k)_{k\ge0}$ have the same distribution.
\end{prop}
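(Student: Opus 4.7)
The plan is to prove the two assertions separately. The a.s. non-failure of the construction is essentially measure-theoretic: failure at some step requires two of the countably many clock durations $\xi_k^\pm(y)/w(n)$ to coincide; since the $\xi_k^\pm(y)$ are independent continuous random variables, a union bound over the countable set of pairs of indices gives probability zero.

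For the distributional identity I would proceed by induction on the jump index $k$, proving simultaneously the two facts (i) $(\widetilde X_{\tau_0},\ldots,\widetilde X_{\tau_k})$ has the same law as $(X_0,\ldots,X_k)$, and (ii) the conditional invariant: given $\widetilde{\mathcal F}_{\tau_k}$ and $\widetilde X_{\tau_k}=y$, the remaining lifetimes of the clocks then attached to $(y,y-1)$ and $(y,y+1)$ are independent exponentials with means $f^-(y,m_-)/w(n_-)$ and $f^+(y,m_+)/w(n_+)$, where $m_\pm=\widetilde N_{\tau_k}(y,y\pm1)$ and $n_\pm=\widetilde Z_{\tau_k}(y\pm1)$.

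Given (ii) at step $k$, the direction of the $(k+1)$-th jump is decided by the race between two independent exponentials, and the standard competing-exponentials formula yields
\[
\Pb\bigl(\widetilde X_{\tau_{k+1}}=y-1\,\big|\,\widetilde{\mathcal F}_{\tau_k}\bigr)=\frac{w(n_-)/f^-(y,m_-)}{w(n_-)/f^-(y,m_-)+w(n_+)/f^+(y,m_+)},
\]
which matches \eqref{probarubin} term by term under the inductive hypothesis (i). This settles the marginal law of $\widetilde X_{\tau_{k+1}}$.

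The only genuinely subtle point --- and where I expect the bulk of the technical work --- is propagating the invariant (ii) across the jump. At the new vertex $y'=y\pm1$, the construction freshly starts a clock on the edge back to $y$ whose lifetime is an exponential with the prescribed mean by definition. For the other clock at $y'$, either $y'$ is being visited for the first time, in which case the clock is fresh with the correct law, or a previously stopped clock is being resumed. In the latter case I would invoke the memoryless property of $\xi_j^\pm(y')$: conditionally on the clock not having yet rung --- which is all the past trajectory reveals about it, by independence across the $\xi$'s --- its remaining lifetime is again exponential with the same rate, and the relevant neighbor visit count entering $w(\cdot)$ has not changed, since the excursion that brought the walker back to $y'$ occurred on the side opposite to the clock in question. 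Independence of the two remaining lifetimes then follows from joint independence of the $\xi$'s together with the classical fact that, in a race between two independent exponentials, the overshoot of the losing clock is independent of the winning time. These observations close the induction and establish both (i) and (ii) for all $k$.
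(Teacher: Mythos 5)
Your proposal is correct and follows essentially the same route as the paper's (much terser) proof: the race between two conditionally independent exponentials reproduces \eqref{probarubin}, the memoryless property handles resumed clocks, and a.s.\ non-failure follows because two independent exponentials are a.s.\ distinct. The one point worth phrasing carefully is the non-failure argument: it is cleaner to deduce it from your invariant (ii) --- at each step the two remaining lifetimes are conditionally independent exponentials, hence a.s.\ unequal, then take a union over the countably many jumps --- rather than from a union bound over pairs of raw $\xi$'s, since simultaneous ringing concerns the rescaled remaining lifetimes rather than the raw clock values.
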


\begin{pf}
Recall that if $U$ and $V$ are two independent exponential random variables of parameter $u$ and $v$ (i.e., with mean
$1/u$ and $1/v$), then $\Pb(U<V)=u/(u+v)$ and $\Pb(U=V)=0$.

Then it suffices to recall \eqref{probarubin} and to notice that, conditionally on $\widetilde{\mathcal{F}}_{\tau_k}$,
the quantities
$\xi_{\widetilde{N}_{\tau_k}(\widetilde{X}_{\tau_k},\widetilde{X}_{\tau_k}\pm1)}^\pm(y)/w(\widetilde{Z}_{\tau_k}(\widetilde{X}_{\tau_k}\pm1))$
are independent exponential random variables with respective means
\[
\frac{f^+(\widetilde{X}_{\tau_k},\widetilde{N}_{\tau_k}(\widetilde{X}_{\tau_k},\widetilde{X}_{\tau_k}+1))}{w(\widetilde{Z}_{\tau_k}(\widetilde{X}_{\tau_k}+1))} \quad\mbox{and}\quad \frac{f^-(\widetilde{X}_{\tau_k},\widetilde{N}_{\tau_k}(\widetilde{X}_{\tau_k},\widetilde{X}_{\tau_k}-1))}{w(\widetilde{Z}_{\tau_k}(\widetilde{X}_{\tau_k}-1))}.
\]\upqed
\end{pf}

Until\vspace*{1pt} the end of this section, we work with $\xi$ defined as in this last proposition, which then implies that
$(\widetilde{X}_{\tau_k})_k$, $(\widetilde{Z}_{\tau_k}(y))_{k,y}$, $(\tilde{l}_{\tau_k}(y))_{k,y}$ and $\widetilde{R}'$
have the same laws as $(X_k)$, $(Z_k(y))$, $(l_k(y))$ and $R'$.

Define, for each $y\in\mathbb{Z}$, the total time consumed by
the clocks attached to the oriented edge $(y,y\pm1)$:
\begin{equation}
T_y^\pm:=\sum_{k\ge0}
\1_{\{\widetilde{X}_{\tau_k}=y,\widetilde{X}_{\tau_{k+1}}=y\pm1\}}\frac{\xi^{\pm}_{\widetilde{N}_{\tau_k}(y,y\pm1)}(y)}{w(\widetilde{Z}_{\tau_k}(y\pm1))}.\label{Ty}
\end{equation}

From the time-line construction, it is clear that
\begin{eqnarray}\label{inclusion}
&& \bigl\{\widetilde{Z}_\infty(y-1)=\infty \bigr\}\cap \bigl\{
\widetilde{Z}_\infty(y+1)=\infty \bigr\}\cap \bigl\{T_y^+
\wedge T_y^-<\infty \bigr\}
\nonumber\\[-8pt]\\[-8pt]\nonumber
&&\qquad \subset \bigl\{T_y^+=T_y^-<\infty \bigr\}
\end{eqnarray}
since otherwise, after a certain time, the walk would jump infinitely many times on one side of $y$ before performing
one more jump on the other side.

Section~\ref{rubin} is dedicated to proving the next proposition, using monotonicity properties of the time-line
construction.

\begin{prop} \label{proba0}
If $\widetilde{X}$ is a continuous-time process defined as in Proposition \ref{rubexp}, then, for any $y\in\mathbb{Z}$,
\[
\Pb \bigl(T_y^+=T_y^-<\infty,\widetilde{Z}_\infty(y-1)=
\widetilde{Z}_\infty(y+1)=\infty \bigr)=0.
\]
\end{prop}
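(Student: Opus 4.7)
The plan is to combine the time-line construction from Proposition \ref{rubexp} with a conditioning-and-monotonicity argument inspired by Tarr\`es \cite{PTsurvey}. By the inclusion \eqref{inclusion} displayed just above the proposition, on the event $\{\widetilde{Z}_\infty(y-1) = \widetilde{Z}_\infty(y+1) = \infty\}$ the two events $\{T_y^+ \wedge T_y^- < \infty\}$ and $\{T_y^+ = T_y^- < \infty\}$ coincide, so it is enough to prove
\[
\Pb\bigl(T_y^+ \wedge T_y^- < \infty,\; \widetilde{Z}_\infty(y-1) = \widetilde{Z}_\infty(y+1) = \infty\bigr) = 0,
\]
which I would obtain by showing that the equality $T_y^+ = T_y^-$ cannot hold on this event except on a null set.

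The core step is to condition on the $\sigma$-algebra $\mathcal{G}_y$ generated by every clock of the Rubin construction \emph{except} the right-side clocks at $y$, namely
\[
\mathcal{G}_y := \sigma\bigl((\xi^\pm_k(y'))_{y' \neq y,\, k \geq 0},\; (\xi^-_k(y))_{k \geq 0}\bigr).
\]
Because $\xi$ is a collection of mutually independent exponentials, conditionally on $\mathcal{G}_y$ the sequence $(\xi^+_k(y))_{k \geq 0}$ remains independent, with $\xi^+_k(y)$ exponential of mean $f^+(y,k)$, and both $T_y^\pm$ and the trajectory of $\widetilde X$ become measurable functionals of this sequence. On the event considered, the sum \eqref{Ty} defining $T_y^+$ contains infinitely many terms, each of which depends linearly on one $\xi^+_k(y)$. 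I would then invoke the monotonicity of the time-line construction to show that a perturbation of a single $\xi^+_k(y)$ produces a quantitatively controlled shift of $T_y^+$, while its effect on $T_y^-$ is mediated only by the ensuing modification of later excursions away from $y$, and can be shown to be of a different order. Combined with the absolute continuity of the exponential law of $\xi^+_k(y)$, this should imply that the conditional law of $T_y^+ - T_y^-$ given $\mathcal{G}_y$ has no atom at $0$, whence $\Pb(T_y^+ = T_y^- \mid \mathcal{G}_y) = 0$ almost surely on the event, and integrating over $\mathcal{G}_y$ would yield the proposition.

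The principal difficulty, and the reason why the classical monotone coupling of \cite{PTsurvey} cannot be applied off the shelf, is that the Stuck Walk is not monotonic in the sense of that paper: the transition probabilities are governed by $\Delta_k(j)$ in \eqref{defdelta}, whose coefficients $-\alpha,+1,-1,+\alpha$ mix signs, so an additional crossing of a given edge can either increase or decrease the drift at different neighbouring vertices. The adaptation announced in the introduction, and carried out in Section \ref{rubin}, therefore has to track how the perturbation of a single clock $\xi^+_k(y)$ propagates through several subsequent rings at $y$ and through the vertices $y \pm 1, y \pm 2$, and to rule out any exact cancellation of the induced shifts of $T_y^+$ and $T_y^-$ on a set of positive conditional probability; this is the step I would expect to be the most delicate.
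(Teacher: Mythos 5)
Your overall strategy --- condition on the clocks other than (some of) the right-hand clocks at $y$, then combine a monotonicity property of the time-line construction with the atomlessness of the exponential law --- is the same as the paper's, but the central step is left vague in a way that matters, and the mechanism you sketch for it is not the right one. The paper conditions on $\mathcal{H}_y$, which fixes \emph{every} clock except the single variable $u=\xi_0^+(y)$, and applies Lemma \ref{lemPT} to the two deterministic constructions $\Mc(\mathcal{H}_y,u)$ and $\Mc(\mathcal{H}_y,u')$: for $u>u'$ one gets $T_y^+(\mathcal{H}_y,u)>T_y^+(\mathcal{H}_y,u')$ and $T_y^-(\mathcal{H}_y,u)\le T_y^-(\mathcal{H}_y,u')$ on the relevant event. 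The two shifts have \emph{opposite signs}, so $u\mapsto T_y^+-T_y^-$ is strictly monotone and can vanish for at most one value $u_0(\mathcal{H}_y)$; the atomlessness of the law of $\xi_0^+(y)$, independent of $\mathcal{H}_y$, then finishes the proof. Your version keeps the whole sequence $(\xi_k^+(y))_{k\ge0}$ random under $\mathcal{G}_y$, proposes to perturb ``a single $\xi_k^+(y)$'', and argues that the induced shift of $T_y^-$ is ``of a different order'' than that of $T_y^+$. That is not what is needed, and it is not clear it is true: what kills the atom at $0$ is not an order-of-magnitude comparison but the fact that the two shifts never have the same sign, which is exactly the content of the monotone coupling. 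Moreover, to conclude from a perturbation of one $\xi_k^+(y)$ you would need the conditional law of $T_y^+-T_y^-$ given all the \emph{other} clocks (including $\xi_{k'}^+(y)$, $k'\neq k$) to be atomless, which brings you back to the paper's one-parameter conditioning on $\mathcal{H}_y$ anyway.

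You also misidentify the principal difficulty. The non-monotonicity of the Stuck Walk (the mixed signs of the coefficients in $\Delta_k(j)$) is not an obstacle to Lemma \ref{lemPT}: that lemma concerns the deterministic time-line construction and only requires the weight $w$ to be non-decreasing; the functions $f^\pm$ enter solely through the means of the clock values, which are already fixed in the coupling. The paper states explicitly that all of Section \ref{monomart} holds for arbitrary positive $f^\pm$ and any non-decreasing $w$, even though $w/f^\pm$ is non-monotonic. In particular there is no need to ``track how the perturbation propagates through $y\pm1,y\pm2$'' nor to ``rule out exact cancellation'': the coupling shows that $T_y^+$ and $T_y^-$ move in opposite directions under the perturbation, so cancellation is impossible by construction. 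Without this observation your plan does not close.
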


The following proposition is proved in Section~\ref{martarg}, using martingale arguments.

\begin{prop}\label{inclu2}
If $\widetilde{X}$ is a continuous-time process defined as in Proposition \ref{rubexp}, then, for any $y\in\mathbb{Z}$,\begin{eqnarray*}
\Biggl\{\sum_{k=0}^{+\infty} e^{2\beta[\tilde{l}_{\tau_k}(y)-\al
\tilde{l}_{\tau_k}(y-1)]}<
\infty \Biggr\}\subset \bigl\{T_y^-<\infty \bigr\}.
\end{eqnarray*}
\end{prop}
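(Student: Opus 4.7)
I would use a martingale argument built on the memoryless property of the exponentials. Enumerate the arrivals of $\widetilde X$ at $y$ as $\sigma_1<\sigma_2<\cdots$ and let $D_i$ denote the dwell time at the $i$-th visit (the time elapsed between $\sigma_i$ and the next jump). When $y$ is visited only finitely often, $T_y^-$ is a finite sum and there is nothing to prove; we may therefore assume that $y$ is visited infinitely often. At each $\sigma_i$, the memoryless property together with the independence of the family $(\xi^\pm)$ guarantees that the remaining lifetimes of the two clocks at $y$ are independent exponential random variables with respective means $\mu_L^{(i)}$ and $\mu_R^{(i)}$, and identity \eqref{egrub2} identifies the former as the summand of the hypothesis evaluated at a visit time:
\[
\mu_L^{(i)}=\frac{f^-(y,\widetilde N_{\sigma_i}(y,y-1))}{w(\widetilde Z_{\sigma_i}(y-1))}=e^{2\beta[\widetilde l_{\sigma_i}(y)-\al \widetilde l_{\sigma_i}(y-1)]}.
\]
Consequently, conditionally on the filtration $\mathcal G_{i-1}$ generated by $\widetilde X$ up to time $\sigma_i$, the dwell time $D_i$ is $\mathrm{Exp}(1/\mu_L^{(i)}+1/\mu_R^{(i)})$, with conditional mean $\nu_i:=\mu_L^{(i)}\mu_R^{(i)}/(\mu_L^{(i)}+\mu_R^{(i)})\le \mu_L^{(i)}$.

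\textbf{Martingale step and conclusion.} On the event $A:=\{\sum_k e^{2\beta[\widetilde l_{\tau_k}(y)-\al \widetilde l_{\tau_k}(y-1)]}<\infty\}$, the visits to $y$ being a sub-family of the indices $k$, the sum $\sum_i \mu_L^{(i)}$ is bounded above by the sum in the hypothesis and is hence finite, so that $\sum_i\nu_i<\infty$. Consider the martingale $M_n:=\sum_{i=1}^n(D_i-\nu_i)$, whose predictable quadratic variation is $\langle M\rangle_n=\sum_{i=1}^n \nu_i^2$ by the exponential variance formula. On $A$, $\nu_i\to 0$, so $\sup_i\nu_i<\infty$ and $\sum_i\nu_i^2\le(\sup_i\nu_i)\sum_i\nu_i<\infty$, and a standard localization argument together with the $L^2$ martingale convergence theorem yields that $M_n$ converges almost surely to some $M_\infty$ on $A$. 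Therefore $\sum_i D_i=\sum_i\nu_i+M_\infty<\infty$ on $A$. Since the total time spent by $\widetilde X$ at $y$ equals $\sum_i D_i$, and $T_y^-$ is bounded above by this total time (each summand of $T_y^-$ being the total accumulated running time of a given left clock throughout its lifetime, with $T_y^-$ summing only over clocks that actually ring), one deduces $T_y^-\le\sum_i D_i<\infty$ on $A$, which is the desired inclusion.

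\textbf{Main obstacle.} The only delicate point is justifying, at each arrival of $\widetilde X$ at $y$, the claimed conditional exponential distribution of $D_i$: regardless of whether each clock at $y$ is newly started or merely resumed after having been paused during an excursion, its remaining lifetime must be shown to be a fresh exponential independent of $\mathcal{G}_{i-1}$ with the parameter announced above. This is a direct consequence of the memoryless property combined with the independence of the family $(\xi^\pm_k(y))$, but has to be threaded carefully through the Rubin construction, keeping in mind that during the lifetime of a given left clock both $\widetilde N(y,y-1)$ and $\widetilde Z(y-1)$ remain constant. Once this is in hand, the rest of the proof is a routine application of martingale convergence.
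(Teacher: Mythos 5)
Your argument is correct, but it takes a genuinely different route from the paper's. The paper works in the time scale of the left clocks at $y$: it sets $T_y^-(n)=\sum_{k\le n}\1_{\{\theta_{k+1}<\infty\}}\xi_k^-(y)/w(i_{k+1})$ and its compensator $Y_y^-(n)=\sum_{k\le n}\1_{\{\theta_{k+1}<\infty\}}f^-(y,k)/w(i_{k+1})$, shows $M_n=T_y^-(n)-Y_y^-(n)$ is a square-integrable martingale, and concludes from $\langle M\rangle_\infty\le\sum_k e^{4\beta[\widetilde{l}_{\tau_k}(y)-\al\widetilde{l}_{\tau_k}(y-1)]}$, which is finite whenever the $2\beta$-series is. The decisive device there is the filtration $\mathcal{G}_n$, which reveals \emph{all} clocks except the left clocks at $y$ of index larger than $n$: this makes $\theta_{n+2}$, $i_{n+2}$ and $\1_{\{\theta_{n+2}<\infty\}}$ measurable while the full variable $\xi_{n+1}^-(y)$ stays independent of $\mathcal{G}_n$, so no memorylessness argument is needed at all (hence the remark that the proof works for arbitrary positive $f^\pm$ and $w$). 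You instead bound $T_y^-$ by the total real time spent at $y$, decompose that into dwell times indexed by visits, and build the martingale on the conditional exponential law of the two remaining lifetimes at each arrival --- exactly the point you flag as the main obstacle. That claim is true and is the same fact that underlies Proposition \ref{rubexp}, so your proof closes; the subsum bound $\sum_i\mu_L^{(i)}\le\sum_k e^{2\beta[\cdots]}$, the estimate $\nu_i\le\mu_L^{(i)}$, and the control $\sum_i\nu_i^2\le(\sup_i\nu_i)\sum_i\nu_i$ are all sound. The trade-off is that your route imports the delicate conditioning step that the paper's filtration is engineered to avoid, and your comparison $T_y^-\le\sum_i D_i$ is slightly cruder (it also counts the running time of clocks that never ring), but this costs nothing since the controlling series is the same.
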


Before proving all the previous technical results in the following sections, let us use them in order to prove the main theorem.

\begin{pf*}{Proof of Theorem \ref{L2L3}}
Fix $x\in\mathbb{Z}$ and fix $K\ge L+2$, so that $\{\widetilde{R}'=\{x,\ldots,x+K+1\}\}\subset\{\llvert  \widetilde{R}' \rrvert  \ge
L+4\}$ and
\begin{eqnarray*}
&& \bigl\{\widetilde{R}'=\{x,\ldots,x+K+1\} \bigr\}
\\
&&\qquad \subset \bigl\{
\widetilde{Z}_\infty(x+L+1)=\infty \bigr\}\cap \bigl\{
\widetilde{Z}_\infty(x+L+3)= \infty \bigr\}.
\end{eqnarray*}

Using the series convergence of Lemma \ref{somfinie} and Proposition \ref{inclu2}, we deduce
\[
\bigl\{\widetilde{R}'=\{x,\ldots,x+K+1\}\bigr\}\subset\bigl
\{T_{x+L+2}^-<+\infty\bigr\}.
\]
Hence, using the inclusion \eqref{inclusion},
\[
\bigl\{\widetilde{R}'=\{x,\ldots,x+K+1\}\bigr\}\subset \bigl
\{T_{x+L+2}^-=T_{x+L+2}^+<+\infty\bigr\}.
\]
Therefore, using Proposition \ref{proba0}, it implies that
\[
\Pb \bigl(\widetilde{R}'=\{x,\ldots,x+K+1\} \bigr)=0.
\]
Taking finally the union over $x\in\mathbb{Z}$ and $K\ge L+2$, and using Proposition \ref{rubexp}, we conclude that
\[
\Pb \bigl(\bigl\llvert R'\bigr\rrvert \ge L+4 \bigr)=\Pb \bigl(
\bigl\llvert \widetilde{R}' \bigr\rrvert \ge L+4 \bigr)=0.
\]\upqed
\end{pf*}


\section{Monotonicity of the time-line construction and martingale arguments} \label{monomart}


In Section~\ref{rubin}, we prove Proposition \ref{proba0} by taking advantage of some monotonicity properties occurring
in the time-line construction. The idea of the whole proof is due to Tarr\`es, \cite{PTsurvey}, but we use, at the end of this
proof, a more convenient argument, due to Basdevant, Schapira and Singh, \cite{BSS1}.
In Section~\ref{martarg}, we use classical martingale arguments to prove Proposition \ref{inclu2}.

In these two sections, we generalize some previous results to a class of dynamics involving
nonmonotonic weight function. More precisely, all the results hold for any positive functions
$f^{\pm}(\cdot,\cdot)$, and any positive nondecreasing function $w(\cdot)$. Note that $w/f^{\pm}$ can be nonmonotonic.


\subsection{Proof of Proposition \texorpdfstring{\protect\ref{proba0}}{2.6}} \label{rubin}


Recall the time-line construction introduced in the previous section, in which we have defined the function $\Mc$ which
maps a collection $\xi$ of positive real numbers to a continuous-time walk $\Mc(\xi)$.

\begin{definition}
Given $\xi^1=((\xi^1)_k^\pm(y))_{y\in\mathbb{Z},k\ge0}$ and $\xi^2=((\xi^2)_k^\pm(y))_{y\in\mathbb{Z},k\ge0}$ two
collections of random variables on $\mathbb{R}_+$, we say that $\xi^1\gg \xi^2$ if, for all $k\ge0$ and
$y\in\mathbb{Z}$, we have that $(\xi^1)_k^+(y)\le(\xi^2)_k^+(y)$ and $(\xi^1)_k^-(y)\ge(\xi^2)_k^-(y)$ almost surely.
\end{definition}

Let us define two continuous-time walks $\widetilde{X}{}^1:=\Mc(\xi^1)$
and $\widetilde{X}{}^2:=\Mc(\xi^2)$, adopting the natural notation $\widetilde{Z}_t^1(y)$ and $\widetilde{Z}_t^2(y)$,
for any $y\in\mathbb{Z}$. For any $i\in\mathbb{N}$ and any $y\in\mathbb{Z}$, let $t_{\{y,y+1\}}^1(i)$ [resp.,
$t_{\{y,y+1\}}^2(i)$] be the time of the $i$th crossing of $\widetilde{X}{}^1$ (resp., $\widetilde{X}{}^2$) of the
nonoriented edge $\{y,y+1\}$, with the convention that $t_{\{y,y+1\}}^1(0)=t_{\{y,y+1\}}^2(0)=0$.

\begin{lem}[(Tarr\`es, \cite{PTsurvey})] \label{lemPT}
Assume that $\xi^1\gg\xi^2$ and that the constructions of $\widetilde{X}{}^1$ and $\widetilde{X}{}^2$ do not fail, then,
for any $i\in\mathbb{N}$ and $y\in\mathbb{Z}$, we almost surely have that
\[
\widetilde{Z}_{t_{\{y,y+1\}}^1(i)}^1(y+1)\ge \widetilde{Z}_{t_{\{y,y+1\}}^2(i)}^2(y+1)
\]
and
\[
\widetilde{Z}_{t_{\{y,y+1\}}^1(i)}^1(y)\le
\widetilde{Z}_{t_{\{y,y+1\}}^2(i)}^2(y).
\]
\end{lem}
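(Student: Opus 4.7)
The key combinatorial observation is that consecutive crossings of the same non-oriented edge $\{y, y+1\}$ necessarily alternate in direction, since after each crossing the walker sits at an endpoint and must return there before the edge can be crossed again. Since both walks start at $0$, the direction of the $i$-th crossing of $\{y, y+1\}$ is completely determined by $(y, i)$ and is the same for both walks. In particular, the counts $\widetilde{N}^j(y, y+1)$ and $\widetilde{N}^j(y+1, y)$ at the respective times $t^j_{\{y, y+1\}}(i)$ do not depend on $j$. Writing $\widetilde{Z}(y+1) = \widetilde{N}(y, y+1) + \widetilde{N}(y+2, y+1)$, the inequality on $\widetilde{Z}(y+1)$ therefore reduces to the inequality $\widetilde{N}^1(y+2, y+1) \ge \widetilde{N}^2(y+2, y+1)$ (at the corresponding times), and symmetrically for $\widetilde{Z}(y)$ the claim becomes $\widetilde{N}^1(y-1, y) \le \widetilde{N}^2(y-1, y)$.

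I would then prove the lemma by induction on $i$, with the base case $i = 0$ trivially satisfied since all the quantities in question are $0$. For the inductive step, the plan is to analyze the excursion of each walk between its $(i-1)$-th and $i$-th crossings of $\{y, y+1\}$. By the alternation property, this excursion takes place on a fixed side of the edge, the same side for both walks, determined by $(y, i)$. On that side, say the right side $\{y+1, y+2, \dots\}$, the walk evolves as a sub-walk governed by the restrictions of the collections $\xi^j$ to vertices $\ge y+1$, and these restrictions still satisfy $\xi^1 \gg \xi^2$. The goal is to compare the two sub-excursions and conclude that walk~$1$ accrues at least as many crossings of $\{y+1, y+2\}$ (equivalently, of any deeper edge in the sub-excursion) as walk~$2$ before finally returning to execute the $i$-th crossing of $\{y, y+1\}$.

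The main obstacle is the feedback through the weight $w$: the effective ringing time of a restarted clock on an oriented edge $(z, z\pm 1)$ is $\xi^\pm_k(z)/w(\widetilde{Z}(z\pm 1))$, which depends on visit counts that are themselves subject to comparison between the two walks. I would handle this via a nested induction on the jumps performed inside each sub-excursion, using crucially the fact that $w$ is non-decreasing: whenever the outer inductive hypothesis guarantees that walk~$1$ has at least as many visits as walk~$2$ at the relevant right-neighbor vertex (and at most as many on the left), the restart weights $w$ only reinforce the ordering of the effective ringing times, and the single jump can then be compared using the raw $\xi^1 \gg \xi^2$ ordering. This is the precise point where the asymmetric monotonicity $(\xi^1)^+_k \le (\xi^2)^+_k$ and $(\xi^1)^-_k \ge (\xi^2)^-_k$ is needed: it biases walk~$1$ outward at each jump within the sub-excursion, so that it makes at least as many crossings of $\{y+1, y+2\}$ before returning to trigger the next crossing of $\{y, y+1\}$. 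Combining this jump-by-jump comparison with the outer inductive hypothesis at $i-1$, applied both to $\{y, y+1\}$ and to its neighbors, yields the desired inequalities at index $i$ and closes the induction.
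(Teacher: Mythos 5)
First, a point of reference: the paper does not prove Lemma \ref{lemPT} --- it is imported from Tarr\`es \cite{PTsurvey} and used as a black box --- so there is no in-paper argument to compare yours against, and I can only assess your sketch on its own terms. Your preliminary reductions are correct and are the right way to begin: crossings of a non-oriented edge alternate in direction, so $\widetilde{N}_{t_{\{y,y+1\}}(i)}(y,y+1)$ is the same for both walks and the claim reduces to comparing $\widetilde{N}(y+2,y+1)$ and $\widetilde{N}(y-1,y)$ at the respective crossing times. You also correctly identify the two mechanisms that make the comparison work: the ordering $\xi^1\gg\xi^2$ of the raw clocks and the monotonicity of $w$, which act in the same direction on the effective ringing times $\xi/w(\cdot)$.

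The gap is in the inductive architecture. Your outer induction is on the crossing index $i$ of the \emph{fixed} edge $\{y,y+1\}$, and the inductive step must compare how many times each walk's right clock at $y+1$ rings before its left clock at $y+1$ does, during the excursion between crossings $i-1$ and $i$. But the effective duration of the clock governing the $m$-th crossing of $(y+1,y+2)$ is $\xi_{m-1}^{+}(y+1)/w\bigl(\widetilde{Z}(y+2)\bigr)$ with $\widetilde{Z}(y+2)$ evaluated when that clock is started; ordering these quantities between the two walks requires precisely the conclusion of the lemma for the edge $\{y+1,y+2\}$ at crossing indices $m$ that are unbounded in terms of $i$, since a single inter-crossing excursion of $\{y,y+1\}$ can contain arbitrarily many crossings of $\{y+1,y+2\}$ (and, recursively, of deeper edges). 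The hypothesis ``at $i-1$, applied to $\{y,y+1\}$ and its neighbors'' therefore does not supply what the step needs, and the ``nested induction on the jumps performed inside each sub-excursion'' is not well defined as stated, because the two walks' sub-excursions are not synchronized and contain different numbers of jumps. This circularity --- the statement for $\{y,y+1\}$ at small $i$ needs the statement for $\{y+1,y+2\}$ at large $m$, and conversely --- is exactly the difficulty of the lemma; it is resolved in \cite{PTsurvey} not by an edge-local induction but by a global one (a first-failure argument over all edges and all crossing indices simultaneously, well-ordered by the jump count of one of the walks). As written, your inductive step asserts the crux rather than proving it.
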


Using this last result, here follows the main proof of this section.

\begin{pf*}{Proof of Proposition \ref{proba0}}
In order to apply the last lemma, fix $y\in\mathbb{Z}$ and define the collection:
\[
\mathcal{H}_y:= \bigl(\bigl(\xi_k^\pm(x),k
\ge0,x\neq y\bigr),\bigl(\xi_k^-(y),k\ge 0\bigr),\bigl(
\xi_k^+(y),k\ge1\bigr) \bigr),
\]
where the $\xi$'s are independent and defined as in Proposition \ref{rubexp}, that is, $\xi_k^\pm(x)$ is an exponential
random variable with mean $f^\pm(x,k)$.

Then note that the process $\widetilde{X}$ defined in Proposition \ref{rubexp} has the same law as
$\Mc(\mathcal{H}_y,\xi_0^+(y))$, where $\xi_0^+(y)$ is an exponential random variable with mean $1$, independent from
$\mathcal{H}_y$. This means that $X$ has the same law as the embedded walk of $\Mc(\mathcal{H}_y,\xi_0^+(y))$.

Moreover, we can create a family of walks $\widetilde{X}{}^{(u)}:=\Mc(\mathcal{H}_y,u)$, $u>0$, which correspond to
continuous-time walks defined according to the time-line construction, with $\xi_0^+(y)=u$.

For all $u>0$, in a way similar to \eqref{Ty}, define
\begin{eqnarray*}
T_y^\pm(\mathcal{H}_y,u)&=&\sum
_{k\ge0} \1_{\{\widetilde{X}_{\tau_k}^{(u)}=y,\widetilde{X}_{\tau_k+1}^{(u)}=y\pm1\}}\frac{\xi^{\pm}_{\widetilde{N}_{\tau_k}^{(u)}(y,y\pm1)}(y)}{w(\widetilde{Z}_{\tau_k}^{(u)}(y\pm1))}
\\
&=&\sum_{k=0}^{\widetilde{N}_\infty^{(u)}(y,y\pm1)}\frac{\xi_k^\pm(y)}{w (\widetilde{Z}_{t_{\{y,y\pm1\}}^{(u)}(2k+\1_{\{\pm
y<0\}})}^{(u)}(y\pm1) )},
\end{eqnarray*}
which is the total time consumed by the clocks attached to the oriented edge \mbox{$(y,y\pm1)$} for the walk $X^{(u)}$.

Then, Lemma \ref{lemPT} implies that, for any $u,u'>0$ such that $u>u'$, almost surely on the event
$\{T_y^\pm(\mathcal{H}_y,u),T_y^\pm(\mathcal{H}_y,u')<\infty\}\cap\{\widetilde{Z}_\infty^{(u)}(y\pm1)=\widetilde{Z}_\infty^{(u')}(y\pm1)=\infty\}$, on which the times $T_y^\pm(\mathcal{H}_y,u),T_y^\pm(\mathcal{H}_y,u')$ are all four finite,
we have that
\[
T_y^+(\mathcal{H}_y,u)>T_y^+\bigl(
\mathcal{H}_y,u'\bigr)\quad\mbox{and}\quad
T_y^-(\mathcal{H}_y,u)\le T_y^-\bigl(
\mathcal{H}_y,u'\bigr).
\]
Thus, it implies that, given $\mathcal{H}_y$, there exists at most one value $u_0=u_0(\mathcal{H}_y)$ of $\xi_0^+(y)$
such that the event $\{T_y^+=T_y^-<\infty\}\cap\{\widetilde{Z}_\infty(y+1)=\widetilde{Z}_\infty(y-1)=\infty\}$ occurs.
If such a value does not exist, then we use the convention $u_0=\infty$.

Note that $\{\widetilde{Z}_\infty(y+1)=\widetilde{Z}_\infty(y-1)=\infty\}$ implies that the construction succeed, which
is already proved to happen a.s.

Then we conclude with
\begin{eqnarray*}
&& \Pb \bigl(T_y^+=T_y^-<\infty,\widetilde{Z}_\infty(y+1)=
\widetilde{Z}_\infty(y-1)=\infty \bigr)
\\
&&\qquad \le\E \bigl(\Pb \bigl(\xi_0^+(y)=u_0|
\mathcal{H}_y \bigr) \bigr)=0.
\end{eqnarray*}\upqed
\end{pf*}


\subsection{Proof of Proposition \texorpdfstring{\protect\ref{inclu2}}{2.7}} \label{martarg}


Fix $y\in\mathbb{Z}$ and assume for simplicity that $y>0$ (but similar arguments hold whenever $y\le0$). Let
$\theta_k$ be the time of the $k$th jump from $y-1$ to $y$, and let $i_k$ be the number of visits to site $y-1$ up to time
$\theta_k$. Define a new filtration $\mathcal{G}$ such that, for all $n\ge0$, we have
\[
\mathcal{G}_n=\sigma \bigl(\bigl(\xi_k^\pm(x),k
\ge0,x\neq y\bigr),\bigl(\xi_k^+(y),k\ge0\bigr),\bigl(
\xi_k^-(y),0\le k\le n\bigr) \bigr),\vadjust{\goodbreak}
\]
 and recall that the $\xi$'s are exponential clocks defined as in Proposition \ref{rubexp}. For any $n\ge0$, define
\begin{eqnarray*}
Y_y^-(n)&:=&\sum_{k=0}^n
\1_{\{\theta_{k+1}<\infty\}}\frac{f^-(y,k)}{w(i_{k+1})},
\\
T_y^-(n)&:=&\sum_{k=0}^n
\1_{\{\theta_{k+1}<\infty\}}\frac{\xi_{k}^-(y)}{w(i_{k+1})},
\\
M_n&:=&T_y^-(n)-Y_y^-(n),
\\
\langle M\rangle_n&:=&\sum_{k=0}^{n}
\E \bigl((M_{k+1}-M_k)^2|\mathcal{G}_k
\bigr).
\end{eqnarray*}

Notice that, given $\mathcal{G}_n$, we know the duration of the $n+1$ first clocks attached to the oriented edge
$(y,y-1)$, and the duration of any other clock attached to any other edge.

Then, as $y>0$, $\theta_{n+2}$, and thus $i_{n+2}$ are $\mathcal{G}_n$-measurable, whereas $\xi_{n+1}^-(y)$ is
independent of $\mathcal{G}_n$. We have
\begin{eqnarray*}
\E (M_{n+1}-M_n|\mathcal{G}_n )&=&
\frac{\1_{\{\theta_{n+2}<\infty\}}}{w(i_{n+2})}\E \bigl(\xi_{n+1}^-(y)-f^-(y,n+1) \bigr)=0,
\\
\E \bigl((M_{n+1}-M_n)^2|\mathcal{G}_n
\bigr)&=&\frac{\1_{\{\theta_{n+2}<\infty\}}}{ (w(i_{n+2}) )^2}\E \bigl( \bigl(\xi_{n+1}^-(y)-f^-(y,n+1)
\bigr)^2 \bigr)
\\
&=&\frac{\1_{\{\theta_{n+2}<\infty\}}(f^-(y,n+1))^2}{ (w(i_{n+2}) )^2}.
\end{eqnarray*}
Hence, $M_n$ is a square integrable $\mathcal{G}$-martingale. By a classic result about square integrable martingales, we know that $M_n$ converges a.s. to
a finite limit on the event $\{\langle M\rangle_\infty<\infty\}$, see \cite{Neveu}, Proposition VII-2.3. Thus, on the
event $\{\langle M\rangle_\infty<\infty\}\cap\{Y_y^-(\infty)<\infty\}$, we have a.s. $T_y^-=\lim_nT_y^-(n)<\infty$.

Besides, \eqref{egrub2} implies
\begin{eqnarray*}
Y_y^-(\infty)&\le& \sum_{k=0}^{+\infty}
e^{2\beta[\tilde{l}_{\tau_k}(y)-\al \tilde{l}_{\tau_k}(y-1)]},
\\
\langle M\rangle_\infty&\le&\sum_{k=0}^{+\infty}
e^{4\beta[\tilde{l}_{\tau_k}(y)-\al \tilde{l}_{\tau_k}(y-1)]},
\end{eqnarray*}
so that $\{Y^-_y(\infty)<\infty\}\subset\{\langle M\rangle_\infty<\infty\}$.
Thus, the following inclusion a.s.\break holds:
\begin{eqnarray*}
\Biggl\{\sum_{k=0}^{+\infty} e^{2\beta[\tilde{l}_{\tau_k}(y)-\al
\tilde{l}_{\tau_k}(y-1)]}<
\infty \Biggr\}\subset \bigl\{T_y^-<\infty \bigr\}.
\end{eqnarray*}

\begin{rem}
Note that, in fact, we do not need the explicit form of $f^\pm$ and~$w$. More generally, for any positive function
$f^\pm$ and $w$, the inclusions $\{Y^-_y(\infty)<\infty\}\subset\{\langle M\rangle_\infty<\infty\}$ and consequently
$\{Y^-_y(\infty)<\infty\}\subset\{T^-_y<\infty\}$ hold.
\end{rem}


\section{Trigonometric results} \label{trigogo}


\subsection{Definition of the linear system} \label{trigo}


Fix $L\ge1$ and assume that $\al\in(\al_{L+1}, \al_L)$, where $(\al_L)_{L\ge1}$ is defined in \eqref{defalpha}.\vadjust{\goodbreak}

We will describe some properties of the set of possible solutions $(l_0,
\ldots,\break l_{K+2})$ to linear equations that
are part of a bi-infinite Fibonacci-type sequence. First, we introduce this linear system, then we distinguish between
three cases, depending on the size of the system.

For all $K\in\mathbb{N}$ and for all $j\in\{1,\ldots,K\}$, let
\begin{eqnarray}
d_0&=&l_0-l_1 +\al l_2,
\nonumber
\\
d_j&=&-\alpha l_{j-1}+l_j-l_{j+1}+
\alpha l_{j+2},\label{dj}
\\
d_{K+1}&=&-\al l_k+l_{K+1}-l_{K+2}.
\nonumber
\end{eqnarray}

For all $K\in\mathbb{N}$, define the linear system:
%
{\renewcommand{\theequation}{$E_K$}
\begin{equation}
\label{eq}
d_1=d_2=
\cdots=d_K=0,\qquad l_0=0\quad\mbox{and}\quad \sum
_{j=1}^{K+1} l_j=1.
\end{equation}\setcounter{equation}{13}}%

Recall that, given $\al>1/3$, $\om\in(0,\pi)$ is the unique real
number such that
\[
\cos(\om)=\frac{1-\al}{2\al}.
\]
Then, using \eqref{defalpha},
\[
\frac{2\pi}{L+3}<\om<\frac{2\pi}{L+2}.
\]

In Section~\ref{small}, we describe the unique solution of the linear system \eqref{eq} with the extra condition
$l_{K+2}=0$, when $K$ is \emph{small}, namely $K\in\{0,\ldots,L+1\}$.

In Section~\ref{medium}, we prove some properties of an associated affine system when $K=L$. In other words, we
consider the same system, but with general $d_j$'s (i.e., not necessarily equal to $0$), and we emphasize some
identities.

In Section~\ref{large}, we describe some properties of the solutions of the linear system \eqref{eq}, with the
conditions $l_1,\ldots,l_{K+2}\ge0$, when $K$ is \emph{large}, that is, $K\ge L$. We also prove Corollary \ref{systneg}.

We refer the reader to \cite{Syst}, for instance, for known properties of these solutions.


\subsection{Small-size system: \texorpdfstring{$K\in\{0,\ldots,L+1\}$}
{Kin\{0,\ldots,L+1\}}} \label{small}


The following proposition gives an explicit form for the solution of \eqref{eq}, with $l_{K+2}=0$, when $K$ is
\emph{small}, that is, $K\in\{0,\ldots,L+1\}$. As we prove in Section~\ref{aswei}, these solutions correspond to the
asymptotic normalized local times of the walk, on the corresponding events.

\begin{prop}\label{systpos}
If $\al\in(\al_{L+1},\al_L)$ and $K\in\{0,\ldots,L+1\}$, then the solution of \eqref{eq}, with the extra condition
$l_{K+2}=0$, is unique and satisfies
\begin{equation}
\label{lj}
\qquad l_j=\frac{\sin (((K+2-j)/2)\om )\sin ({j\om}/{2} )}{Z}\qquad\mbox{for all } j\in\{0,
\ldots,K+2\},
\end{equation}
where $Z$ is a normalisation constant. Moreover,
\begin{equation}
\label{d01} d_0=-d_{K+1}=-\al\frac{\sin(({(K+3)}/{2})\om)\sin({\om}/{2})}{Z}.
\end{equation}
The quantities $l_1,\ldots,l_{K+1}$ are positive and if $K\in\{0,\ldots,L-1\}$ (resp., $K\in\{L,L+1\}$) then $d_0<0$ (resp.,
$d_0>0$).

Moreover, \eqref{lj} and \eqref{d01} hold when $\al=\al_L$ and $K\in\{1,\ldots,L-1\}$.
\end{prop}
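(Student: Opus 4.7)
The plan is to solve the linear recurrence explicitly via its characteristic polynomial, apply the boundary conditions and normalization to extract the closed form \eqref{lj}, and then verify \eqref{d01} by a short trigonometric computation.

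First, substituting $l_j = x^j$ into $-\alpha l_{j-1}+l_j-l_{j+1}+\alpha l_{j+2}=0$ yields the characteristic polynomial
\[
\alpha x^3 - x^2 + x - \alpha = (x-1)\bigl(\alpha x^2 + (\alpha-1)x + \alpha\bigr).
\]
Its discriminant $-(3\alpha-1)(\alpha+1)$ is negative for $\alpha > 1/3$, so the quadratic factor has two complex-conjugate roots on the unit circle with real part $(1-\alpha)/(2\alpha) = \cos\omega$. Hence every solution of the recurrence is of the form $l_j = A + D\cos(j\omega) + E\sin(j\omega)$, a three-parameter family matching the order of the recurrence.

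Next I impose $l_0 = 0$, which forces $D = -A$, and rewrite using half-angle formulas as $l_j = 2\sin(j\omega/2)\bigl[A\sin(j\omega/2) + E\cos(j\omega/2)\bigr]$. The condition $l_{K+2}=0$, combined with $\sin((K+2)\omega/2)\neq 0$ --- which holds throughout the parameter ranges considered, since $\omega\in(2\pi/(L+3),2\pi/(L+2))$ keeps $(K+2)\omega/2$ in $(0,\pi)$ for $K\le L$ and strictly between $\pi$ and $(L+3)\pi/(L+2)<2\pi$ for $K=L+1$ --- lets me parametrize $(A,E) = \lambda(-\cos((K+2)\omega/2),\sin((K+2)\omega/2))$ for some scalar $\lambda$. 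A sine-difference identity then collapses the bracket to $\lambda\sin((K+2-j)\omega/2)$, giving $l_j = 2\lambda\sin(j\omega/2)\sin((K+2-j)\omega/2)$. The normalization $\sum_{j=1}^{K+1} l_j = 1$ fixes $\lambda$ (equivalently $Z:=1/(2\lambda)$) and produces the unique solution \eqref{lj}. For $j\in\{1,\ldots,K+1\}$ both $j\omega/2$ and $(K+2-j)\omega/2$ lie in $(0,\pi)$, so each sine factor is positive; since the sum equals $1>0$, this forces $Z>0$, and hence $l_1,\ldots,l_{K+1}>0$.

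To prove \eqref{d01} I substitute into $d_0 = -l_1 + \alpha l_2$ and factor out $2\lambda\sin(\omega/2)$, reducing the claim to the trigonometric identity
\[
-\sin((K+1)\omega/2)+2\alpha\cos(\omega/2)\sin(K\omega/2) = -\alpha\sin((K+3)\omega/2).
\]
To verify it, expand $2\cos(\omega/2)\sin(K\omega/2) = \sin((K+1)\omega/2) + \sin((K-1)\omega/2)$, use $\alpha - 1 = -2\alpha\cos\omega$ (from the definition of $\omega$), then apply $2\sin((K+1)\omega/2)\cos\omega = \sin((K+3)\omega/2) + \sin((K-1)\omega/2)$; the $\sin((K-1)\omega/2)$ terms cancel and the right-hand side emerges. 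The equality $d_{K+1} = -d_0$ is immediate from the palindromic symmetry $l_{K+2-j} = l_j$ of the explicit formula, via $d_{K+1} = -\alpha l_2 + l_1 = -d_0$.

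Finally, the sign of $d_0 = -\alpha\sin((K+3)\omega/2)\sin(\omega/2)/Z$ reduces to that of $-\sin((K+3)\omega/2)$. Using $\omega\in(2\pi/(L+3),2\pi/(L+2))$, one checks $(K+3)\omega/2\in(0,\pi)$ for $K\in\{0,\ldots,L-1\}$, giving $d_0<0$, and $(K+3)\omega/2\in(\pi,2\pi)$ for $K\in\{L,L+1\}$, giving $d_0>0$. The critical case $\alpha=\alpha_L$ with $K\in\{1,\ldots,L-1\}$ goes through by the same computation: at $\omega=2\pi/(L+2)$ one still has $\sin((K+2)\omega/2)\neq 0$, so \eqref{lj} and \eqref{d01} remain meaningful. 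The main obstacle in executing this plan is the trigonometric identity for $d_0$, but the substitution $\alpha - 1 = -2\alpha\cos\omega$ reduces it to a one-line product-to-sum manipulation.
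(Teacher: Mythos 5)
Your proof is correct and follows essentially the same route as the paper: solve the three-term recurrence via the characteristic polynomial $\al(X-1)(X-e^{i\om})(X-e^{-i\om})$, impose $l_0=l_{K+2}=0$ and the normalization to get the product-of-sines formula, then read off signs from $\om\in(2\pi/(L+3),2\pi/(L+2))$. The only (cosmetic) difference is in \eqref{d01}: the paper extends the bi-infinite Fibonacci-type sequence to $j=-1$ and uses $d_0=\al l_{-1}$, whereas you verify the equivalent trigonometric identity directly via $\al-1=-2\al\cos\om$; both computations are valid.
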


\begin{rem}\label{notationd0}
We will later deal with two different systems \eqref{eq} and $(E_{K'})$ together, with $K\neq K'$. We will then use the
notations $d_0(K)$, $d_{K+1}(K)$ and $d_0(K')$, $d_{K'+1}(K')$ in order to avoid any confusion.
\end{rem}

\begin{pf*}{Proof of Proposition \ref{systpos}}
All the arguments in the beginning of this proof, until \eqref{lj3}, hold for any $K\in\mathbb{N}$ and will be used
later.

Let us first emphasize the fact that the set of solutions of the system $d_1=\cdots=d_K=0$ is $3$-dimensional. Indeed, let
$(l_0,\ldots,l_{K+2})$ be a real-valued vector satisfying $d_1=\cdots=d_K=0$. If we fix $l_0$, $l_1$ and $l_2$, then it is
easy to see that, by induction, we can find a unique solution. It implies that the set of solutions is
$3$-dimensional.

Moreover, these solutions satisfy linear recurrence relations with characteristic polynomial $P(X)=\al
X^3-X^2+X-\al=\al(X-1)(X-e^{i\om})(X-e^{-i\om})$, hence the general solutions to \eqref{eq} are such that
\begin{equation}
\label{ljgen1} l_j=A+Be^{ij\om}+Ce^{-ij\om}\qquad
\mbox{for all } j\in\{0,\ldots,K+2\},
\end{equation}
for some constants $A, B, C\in\mathbb{C}$. Recall that $\om$ is such that $\cos(\om)=(1-\al)/2\al$.

Then, using the conditions $l_0=0$, $l_{K+2}=0$ and $\sum l_j =1$, we can compute $A$, $B$ and $C$, which will imply
the uniqueness of the solution $(l_0,\ldots,l_{K+2})$ of \eqref{eq} with $l_{K+2}=0$.

First, the condition $l_0=A+B+C=0$ implies
\begin{equation}
\label{ljgen} l_j=A \bigl(1-e^{ij\om} \bigr)-C
\bigl(e^{ij\om}-e^{-ij\om} \bigr).
\end{equation}

Now, let us write $l_j$ as a function of $A$ and $l_{j_0}$, for some $j_0\in\{1,\ldots,(L+3)\wedge(K+2)\}$.

As $2\pi/(L+3)<\om<2\pi/(L+2)$, we have $e^{ij_0\om}-e^{-ij_0\om}\neq0$, for any $j_0\in\{1,\ldots,(L+3)\wedge(K+2)\}$.
Now fix $j_0\in\{1,\ldots,(L+3)\wedge(K+2)\}$, then \eqref{ljgen} applied to $j=j_0$ implies that
\[
C=\frac{A (1-e^{ij_0\om} )-l_{j_0}}{e^{ij_0\om}-e^{-ij_0\om}},
\]
and, for all $j\in\{1,\ldots,K+2\}$, we have
\begin{eqnarray}\label{lj3}
l_j&=& A \bigl(1-e^{ij\om} \bigr)+ \bigl(A
\bigl(e^{ij_0\om}-1 \bigr)+l_{j_0} \bigr)\frac{e^{ij\om}-e^{-ij\om}}{e^{ij_0\om}-e^{-ij_0\om}}
\nonumber
\\
&=&\frac{\sin(j\om)}{\sin(j_0\om)}l_{j_0}+A\frac{\sin(j_0\om)-\sin((j_0-j)\om)-\sin(j\om)}{\sin(j_0\om)}
\nonumber\\[-8pt]\\[-8pt]\nonumber
&=&\frac{\sin(j\om)}{\sin(j_0\om)}l_{j_0}+A\frac{\cos(({j_0}/{2})\om)-\cos((({j_0}/{2})-j)\om)}{\cos(({j_0}/{2})\om)}
\nonumber
\\
&=&\frac{\sin(j\om)}{\sin(j_0\om)}l_{j_0}-2A\frac{\sin((({j_0-j})/{2})\om)\sin(j\om/{2})}{\cos(({j_0}/{2})\om)}.\nonumber
\end{eqnarray}
The previous arguments hold for any $K\in\mathbb{N}$, but we now focus on the case $K\in\{0,\ldots,L+1\}$. We can apply \eqref{lj3} to $j_0=K+2$, and use the conditions $l_{K+2}=0$ and $\sum l_j=1$. This
yields
\begin{eqnarray}
l_j&=&\frac{\sin (((K+2-j)/2)\om )\sin (j\om/2)}{\sum_{i=1}^{K+1}\sin (((K+2-i)/2)\om )\sin ({i\om}/{2} )}
\nonumber\\[-8pt]\\[-8pt]\nonumber
&=&\frac{\cos ((((K+2)/2)-j)\omega )-\cos (((K+2)/2)\omega )}{\sum_{i=1}^{K+1} [\cos ((((K+2)/2)-i)\omega )-
\cos (((K+2)/2)\omega ) ]}.\label{labelattias}
\end{eqnarray}
Notice that, if $\al\in(\al_{L+1},\al_L)$ and $K\in\{0,\ldots,L+1\}$, then for all $j\in\{1,\ldots, K+1\}$:
\[
\sin \biggl(\frac{K+2-j}{2}\om \biggr)>0\quad\mbox{and}\quad\sin \biggl(
\frac{j\om}{2} \biggr)>0.
\]
We have now proved \eqref{lj}.

Let us compute $d_0=l_0-l_1+\al l_2$. As the solution is symmetric, that is, $l_j=l_{K+2-j}$, we have that
$d_{K+1}=-d_0$.

There is an easy way to compute $d_0$. Indeed, let us extend the definition \eqref{lj} of $l_j$ to $j=-1$, then, using
the fact that the system is a part of a bi-infinite Fibonacci-type sequence (for which $d_j=0$ for all
$j\in\mathbb{Z}$), one can show that $-\al l_{-1}+l_0-l_1+\al l_2=0$. Then it implies
\[
d_0=\al l_{-1}=-\al\frac{\sin (((K+2+1)/2)\om )\sin ({\om}/{2} )}{\sum_{i=1}^{K+1}\sin
(((K+2-i)/2)\om )\sin (i\om/2 )},
\]
which completes the proof.
\end{pf*}

\begin{rem}\label{premsyst}
The critical case $\al=\al_L$ is more difficult to describe. When $K=L$, then the conditions $l_0=0$ and $l_{L+2}=0$
are equivalent, hence the solution is not unique anymore, and the set of solutions is in fact $1$-dimensional. More
generally, the solution is not unique as soon as $(K+2)\om$ is a multiple of $2\pi$.
\end{rem}

\begin{rem}\label{remsyst}
If $\al\in(\al_{L+1},\al_L)$ is such that $\omega/\pi$ is irrational, then the solution of the system, with extra condition
$l_{K+2}=0$, is unique for any $K$. Moreover, we can find infinitely many $K$'s such that $l_1,\ldots,l_{K+1}, d_0$ are
positive and $d_{K+1}$ is negative.
Indeed, this occurs when the distance between $(K+2)\omega/2$ and $(2\mathbb{Z}+1)\pi$ is smaller than the distance
between $j\omega/2$ and $(2\mathbb{Z}+1)\pi$, for all $j\in\{1,\ldots,K,K+4\}$. One can convince itself of that fact by using
\eqref{labelattias} and drawing a picture of the unit circle. We refer the reader to \cite{ETW} for more details.

Nevertheless, when $K\ge L$, there can be more than one solution, some $l_j$'s can be nonpositive and $d_0$,
$d_{K+1}$ can take arbitrary signs.
\end{rem}


\subsection{Associated affine medium-size system: $K=L$} \label{medium}


The main purpose of this section is to prove the following proposition and its corollary, which will be used to prove
Proposition \ref{proprang}.

Let us first define the affine version of the system $(E_L)$.

Fix $L\ge1$ and assume that $\al\in(\al_{L+1},\al_L)$, where $(\al_L)_{L\ge1}$ is defined in \eqref{defalpha}. Fix also
$L$ real numbers $d_j$, for $j\in\{1,\ldots,L\}$. Then consider the following system on $(l_0,\ldots,l_{L+2})$:
\[
AS(d_1,\ldots,d_L) \cases{
\displaystyle l_0=l_{L+2}=0,
\vspace*{3pt}\cr
\displaystyle d_j=-\alpha l_{j-1}+l_j-l_{j+1}+ \alpha l_{j+2},\qquad\mbox{for all }j\in\{1,\ldots,L\},
\vspace*{3pt}\cr
\displaystyle \sum_{j=1}^{L+1} l_j=1.}
\]

Define also the quantities
\begin{eqnarray*}
d_0&:=&-l_1+\al l_2,
\\
d_{L+1}&:=&-\al l_L+l_{L+1}.
\end{eqnarray*}

\begin{prop}\label{gensyst}
Assume that $\al\in(\al_{L+1},\al_L)$. There exists a unique solution $(l_0,\ldots,l_{L+2})$ of $AS(d_1,\ldots,d_L)$, and
there exist positive constants $c_1,\ldots,c_L$, that do not depend on $d_1,\ldots,d_L$, such that
\begin{eqnarray*}
d_{L+1}&=&-d_0(L)-\sum_{k=1}^L
c_kd_k,
\\
d_0&=&d_0(L)-\sum_{k=1}^L
c_{L+1-k}d_k,
\end{eqnarray*}
where $d_0(L)$ is a positive constant defined in Remark \ref{notationd0}.
\end{prop}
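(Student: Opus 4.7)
The plan is to exploit the linear (affine) structure of $AS(d_1,\ldots,d_L)$ and then the left--right symmetry $j\leftrightarrow L+2-j$ to obtain the two formulas simultaneously, leaving only the positivity of the $c_k$ as genuine work.

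\emph{Step 1: Existence and uniqueness.} After substituting $l_0 = l_{L+2} = 0$, the system becomes an affine system on $(l_1,\ldots,l_{L+1})\in\mathbb{R}^{L+1}$ with $L+1$ scalar equations, so existence and uniqueness amount to showing that its homogeneous version ($d_1 = \cdots = d_L = 0$ and $\sum_j l_j = 0$) has only the trivial solution. The arguments of Proposition~\ref{systpos} show that any sequence satisfying $l_0 = l_{L+2} = 0$ together with the $L$ homogeneous recurrences spans a one-dimensional space (this uses $(L+2)\omega\notin 2\pi\mathbb{Z}$, which holds since $\omega\in(2\pi/(L+3),2\pi/(L+2))$), spanned by the solution of Proposition~\ref{systpos}; as the latter has sum $1$, the extra constraint $\sum l_j = 0$ forces the zero solution.

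\emph{Step 2: Affine decomposition.} By linearity the unique solution of $AS(d_1,\ldots,d_L)$ can be written as
\[
l_j = \tilde l^{(0)}_j + \sum_{k=1}^{L} d_k\, G_k(j),
\]
where $\tilde l^{(0)}$ is the Proposition~\ref{systpos} solution for $K=L$ and $G_k$ is the Green's function, i.e.\ the unique sequence with $d_i = \delta_{ik}$, $l_0 = l_{L+2} = 0$ and $\sum_j G_k(j)=0$ (whose existence and uniqueness follow from Step~1). Substituting in the definitions of $d_0$ and $d_{L+1}$ produces affine expressions
\[
d_0 = d_0(L) + \sum_{k=1}^L \gamma_k\, d_k, \qquad d_{L+1} = -d_0(L) + \sum_{k=1}^L \delta_k\, d_k,
\]
with $\gamma_k = -G_k(1)+\alpha G_k(2)$ and $\delta_k = -\alpha G_k(L)+G_k(L+1)$; the constant parts are read off from $\tilde l^{(0)}$ using the symmetry $\tilde l^{(0)}_j = \tilde l^{(0)}_{L+2-j}$, and $d_0(L)>0$ is the content of Proposition~\ref{systpos} with $K=L$.

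\emph{Step 3: Symmetry.} A direct substitution in the recurrence shows that if $(l_j)$ solves $AS(d_1,\ldots,d_L)$, then $l_j' := l_{L+2-j}$ solves $AS(-d_L,\ldots,-d_1)$, and moreover $d_0' = -d_{L+1}$. Applying the Step~2 formula for $d_0$ to the reflected data, then substituting the Step~2 formula for $d_{L+1}$ and equating coefficients in $d_k$, one gets $\gamma_k = \delta_{L+1-k}$ for every $k$. Setting $c_k := -\delta_k$ recovers precisely the two formulas of the statement.

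\emph{Step 4: Positivity of $c_k$.} This is the main obstacle. The strategy is to compute $G_k$ explicitly by gluing two real solutions of the homogeneous recurrence, one on $\{0,\ldots,k+1\}$ vanishing at $0$ and the other on $\{k-1,\ldots,L+2\}$ vanishing at $L+2$, matched through the unit jump imposed by $d_k = 1$. Expanding each piece in the basis $\{1,\cos(j\omega),\sin(j\omega)\}$ associated with the characteristic polynomial $\alpha(X-1)(X-e^{i\omega})(X-e^{-i\omega})$, one obtains closed-form expressions for $G_k(L)$ and $G_k(L+1)$ as ratios of products of sines of half-integer multiples of $\omega$, and the positivity of $c_k = \alpha G_k(L) - G_k(L+1)$ should then follow from $\omega \in (2\pi/(L+3),2\pi/(L+2))$, exactly in the spirit of the trigonometric bookkeeping of Proposition~\ref{systpos} and Corollary~\ref{systneg}. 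This sign analysis is where the bulk of the technical work lies.
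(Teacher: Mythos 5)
Your Steps 1--3 are essentially the paper's argument, just phrased in terms of Green's functions instead of the matrix $M^{-1}$: the invertibility follows from uniqueness of the $(E_L)$ solution (Proposition~\ref{systpos}), the affine decomposition is the same, and your reflection $j\mapsto L+2-j$ with $d'_j=-d_{L+1-j}$, $d'_0=-d_{L+1}$ is exactly how the paper deduces the second identity from the first. These parts are fine.

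The gap is Step~4, which you acknowledge but do not close. "Compute $G_k$ by gluing and then sign-analyse a ratio of sines" is a legitimate direction, but it is exactly the hard computation, and saying the positivity "should then follow" from $\om\in(2\pi/(L+3),2\pi/(L+2))$ is not a proof: the Green's function $G_k$ requires matching two 2-parameter families with a jump condition plus the normalisation $\sum_j G_k(j)=0$, and once you expand $\alpha G_k(L)-G_k(L+1)$ you face a sign question for a genuinely new trigonometric expression depending on both $k$ and $L$ that is not covered by Proposition~\ref{systpos} or Corollary~\ref{systneg}. The paper avoids this entirely by a recursive trick: to show $c_k>0$, set $d_{k+1}=\cdots=d_L=0$ and instead of prescribing $d_1,\ldots,d_k$, prescribe $l_1=\cdots=l_k=0$. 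Then the shifted vector $(l_k,\ldots,l_{L+2})$ is the unique solution of $(E_{L-k})$ with vanishing endpoint, so Proposition~\ref{systpos} gives the values and signs of $d_{k-1}=\alpha l_{k+1}>0$, $d_k=d_0(L-k)<0$, $d_{L+1}=-d_0(L-k)>0$, and $d_1=\cdots=d_{k-2}=0$. Plugging these into the already-established identity $d_{L+1}=-d_0(L)-\sum_m c_m d_m$ isolates $c_k=\bigl(-d_0(L)-d_{L+1}-c_{k-1}d_{k-1}\bigr)/d_k$, whose positivity follows from the induction hypothesis $c_{k-1}>0$ and the known signs. No new trigonometric sign analysis is needed. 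You should replace your Step~4 with this inductive argument, or else actually carry out the sign computation you sketched.
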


\begin{coro}\label{corgensyst}
Assume that $\al\in(\al_{L+1},\al_L)$, and that $(l_0,\ldots,l_{L+2})$
is a solution of the system
\begin{eqnarray*}
l_0&=&0,\qquad l_{L+2}\ge0,
\nonumber
\\
d_j&=&-\alpha l_{j-1}+l_j-l_{j+1}+
\alpha l_{j+2}\qquad\mbox{for all }j\in\{1,\ldots,L\},
\\
\sum_{j=1}^{L+1} l_j&=&1.
\end{eqnarray*}
Then the following inequality holds:
\begin{eqnarray*}
d_0&\ge&d_0(L)-\sum_{k=1}^L
c_{L+1-k}d_k.
\end{eqnarray*}
\end{coro}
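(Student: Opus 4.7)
The strategy is to reduce the corollary to Proposition \ref{gensyst} by a simple truncation argument: the only difference between the hypothesis here and in the proposition is the boundary condition at the right endpoint, so the plan is to ``close up'' the system at $L+2$ and track exactly what this does to the $d_j$'s.

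Concretely, I would set $\tilde l_j := l_j$ for $j\in\{0,\dots,L+1\}$ and $\tilde l_{L+2}:=0$, and then compute the $\tilde d_j$ associated to $(\tilde l_0,\dots,\tilde l_{L+2})$ via the formula \eqref{dj}. Since $\tilde l_j=l_j$ for $j\le L+1$, we have $\tilde d_j=d_j$ for every $j\in\{1,\dots,L-1\}$ (the index $l_{L+2}$ never enters these), while
\[
\tilde d_L=-\al l_{L-1}+l_L-l_{L+1}+\al\cdot 0=d_L-\al l_{L+2}.
\]
Moreover $\tilde l_0=\tilde l_{L+2}=0$ and $\sum_{j=1}^{L+1}\tilde l_j=\sum_{j=1}^{L+1}l_j=1$, so $(\tilde l_0,\dots,\tilde l_{L+2})$ solves the system $AS(d_1,\dots,d_{L-1},d_L-\al l_{L+2})$.

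Now I would apply Proposition \ref{gensyst} to this auxiliary system. Since the value $d_0=-l_1+\al l_2$ depends only on $l_1,l_2$, which were not modified, the quantity $d_0$ in the original problem equals the $d_0$ produced by Proposition \ref{gensyst} applied to the tilde-system. Hence
\[
d_0=d_0(L)-\sum_{k=1}^{L-1}c_{L+1-k}d_k-c_1\bigl(d_L-\al l_{L+2}\bigr)=d_0(L)-\sum_{k=1}^{L}c_{L+1-k}d_k+\al c_1 l_{L+2}.
\]
Because $l_{L+2}\ge 0$ by assumption and $c_1>0$ (this positivity is exactly the part of Proposition \ref{gensyst} that does the work), the last term is nonnegative and the desired inequality follows.

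The argument has essentially no obstacle beyond invoking Proposition \ref{gensyst}; the only subtle point is the bookkeeping which guarantees that only the $d_L$ coordinate is affected by the truncation, so that the correction to the formula is a single nonnegative term $\al c_1 l_{L+2}$ that conveniently goes in the right direction. Thus the entire weight of the proof is carried by the positivity of the constant $c_1$, already provided by the proposition.
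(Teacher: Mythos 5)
Your proof is correct and is essentially identical to the paper's: the paper also truncates by setting $\widetilde{l}_{L+2}=0$, observes that the resulting vector solves $AS(d_1,\dots,d_{L-1},d_L-\al l_{L+2})$, applies Proposition \ref{gensyst}, and concludes from $c_1>0$ and $l_{L+2}\ge 0$. Your version just spells out the bookkeeping that the paper leaves implicit.
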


\begin{pf} 
Define $\tilde{l}_j=l_j$ for $j\in\{0,\ldots,L+1\}$, and $\tilde{l}_{L+2}=0$. Then $\widetilde{d}_0=d_0$, and
notice that $(\tilde{l}_0,\ldots,\tilde{l}_{L+2})$ is a solution of $AS(d_1,\ldots, d_{L-1},d_L-\al l_{L+2})$, hence
\[
d_0=d_0(L)-\sum_{k=1}^L
c_{L+1-k}d_k+\al c_1 l_{L+2},
\]
which enables us to conclude, using that $l_{L+2}\ge0$.
\end{pf}

\begin{pf*}{Proof of Proposition \ref{gensyst}}
First, notice that the matrix associated to the system $AS(d_1,\ldots,d_L)$ can be written as
\[
M:=\pmatrix{ 1 & 0 & \cdots & \cdots & \cdots & \cdots & 0
\cr
0 & \cdots &
\cdots & \cdots & \cdots &0 & 1
\cr
0& 1 & \cdots & \cdots & \cdots & 1 & 0
\cr
-
\al&1&-1&\al&0& \cdots &0
\cr
&\ddots&\ddots&\ddots&\ddots&&
\cr
&&\ddots&\ddots&
\ddots&\ddots&
\cr
0& \cdots &0&-\al&1&-1&\al},
\]
where the size of $M$ is $(L+3)^2$, and $AS(d_1,\ldots,d_L)$ is then rewritten as
\[
M\pmatrix{ l_0
\cr
\vdots
\cr
l_{L+2}}= \pmatrix{ 0
\cr
0
\cr
1
\cr
d_1
\cr
\vdots
\cr
d_L}.
\]

By Proposition \ref{systpos}, there exists a unique solution to $(E_L)$ with $l_{L+2}=0$, which is equivalent to
$AS(0,\ldots,0)$. Therefore, $M$ has a nonzero determinant, thus it is invertible.

Now, we first want to prove that, for some constants $c_1$, \ldots, $c_L$,
\begin{eqnarray}
d_{L+1}&=&-d_0(L)-\sum_{k=1}^L
c_kd_k,\label{dl1gene}
\end{eqnarray}
which, by symmetry, will imply
\begin{eqnarray*}
d_0&=&d_0(L)-\sum_{k=1}^L
c_{L+1-k}d_k,
\end{eqnarray*}
and we will then prove that these constants are positive. Notice that
\begin{eqnarray*}
d_{L+1}&=&-\al l_L+l_{L+1}=\pmatrix{ 0& \cdots
&0&-\al&1&0}M^{-1}\pmatrix{ 0
\cr
0
\cr
1
\cr
d_1
\cr
\vdots
\cr
d_L}
\\
&=&\pmatrix{ 0& \cdots &0&-\al&1&0}M^{-1} \Biggl(v_3 +
\sum_{k=1}^L d_k
v_{3+k} \Biggr),
\end{eqnarray*}
where $(v_k)_k$ is the canonical basis of $\mathbb{R}^{L+3}$. Then, using Proposition \ref{systpos} and the notations
of Remark \ref{notationd0}, the vector $M^{-1}v_3$ is the solution to $(E_L)$ with $l_{L+2}=0$, which yields
\[
d_0(L)={-}\pmatrix{0& \cdots &0&-\al&1&0} M^{-1}v_3,
\]
and we prove \eqref{dl1gene} by defining, for all $k\in\{1,\ldots,L\}$,
\[
c_k={-}\pmatrix{ 0& \cdots &0&-\al&1&0}M^{-1}v_{3+k}.
\]

It remains to prove that, for all $k\in\{1,\ldots,L\}$, $c_k>0$. Recall that these constants do not depend on
$d_1,\ldots,d_L$. We proceed by induction on $k=1,\ldots,L$.

First, for $k=1$, choose $d_2=\cdots=d_L=0$, and\vspace*{1pt} instead of fixing $d_1$ let us fix $l_1=0$. Then the shifted vector
$(\tilde{l}_0,\ldots,\tilde{l}_{(L-1)+2}):=(l_1,\ldots,l_{L+2})$ is the unique solution of $(E_{L-1})$ with
$\tilde{l}_{(L-1)+2}=0$. Therefore, using Proposition \ref{systpos} and the notation of Remark \ref{notationd0}, the
solution is unique, $d_1=d_0(L-1)<0$ and $d_{L+1}=-d_0(L-1)>0$. On the other hand, \eqref{dl1gene} implies that
$c_1= (-d_0(L)-d_{L+1} )/ d_1>0$, which finishes the case $k=1$.

Now, assume that, for some $k\in\{2,\ldots,L\}$, $c_1,\ldots,c_{k-1}>0$, and let us prove that $c_{k}>0$. As previously, fix
$d_{k+1}=\cdots=d_L=0$ (except if $k=L$), and fix $l_1=\cdots=l_k=0$. Then the shifted vector
$(\tilde{l}_0,\ldots,\tilde{l}_{(L-k)+2}):=(l_{k},\ldots,l_{L+2})$ is the unique solution of $(E_{L-k})$ with
$\tilde{l}_{(L-k)+2}=0$. Thus, $d_1=\cdots=d_{k-2}=0$ (if $k\ge3$), $d_{k-1}=\al l_{k+1}>0$, $d_k=d_0(L-k)<0$ and
$d_{L+1}=-d_0(L-k)>0$. On the other hand, \eqref{dl1gene} implies that
$c_k= (-d_0(L)-d_{L+1}-c_{k-1}d_{k-1} )/d_k>0$, which completes the proof.
\end{pf*}


\subsection{Nonnegative solutions of the large-size system, 
that is, \texorpdfstring{$K\ge L$}{K>= L}, and proof of 
Corollary \texorpdfstring{\protect\ref{systneg}}{2.3}}
\label{large}


Recall the definition of the system \eqref{eq} introduced in Section~\ref{trigo}. When $K\ge L$, as it is noticed
in Remarks \ref{premsyst} and \ref{remsyst}, the set of solutions can be $1$-dimensional, and {a priori} we cannot
determine the signs of $d_0$ nor $d_{K+1}$.

But, the following result, which implies Corollary \ref{systneg}, is helpful in order to understand the behavior of the
solutions, even without uniqueness.

\begin{prop}\label{system}
\textup{(i)} Assume that $\al\in(\al_{L+1},\al_L)$ and $K\geq L$. Then, if some real-valued vector $(l_0,\ldots,l_{K+2})$ satisfies
\eqref{eq} and if $l_1,\ldots,l_{K+2}\geq0$, then $d_0\ge c(K)>0$, where $c(K)$ is a positive constant depending only on
$\al$ and $K$.

It implies that if $l_{K+2}=0$, then, by symmetry, we have $d_{K+1}\leq-c(K)$.

\textup{(ii)} Assume that $\al\in(\al_{L+1},\al_L]$. If $K\ge L$ and if $(l_0,\ldots,l_{K+2})$ satisfies \eqref{eq}, then
\begin{eqnarray}
\label{y}
\qquad && l_{L+2}-\al l_{L+1}
\nonumber\\[-8pt]\\[-8pt]\nonumber
&&\qquad =-\frac{\sin(((L+2)/2)\om)}{\sin((L/2)\om)}l_1+2
\al\frac{\cos(\om/2)\sin(((L+3)/2)\om)}{\sin((L/2)\om)}l_{L+1}.
\end{eqnarray}
Moreover, if $\al\in(\al_{L+1},\al_L)$ and if $l_1,\ldots,l_{K+2}\geq0$, then $l_{L+2}-\al l_{L+1}<-c(K)$, where we can choose $c(K)$ such that it is
the same positive constant as in \textup{(i)} by convenience.
\end{prop}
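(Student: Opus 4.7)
My plan is to parametrize the 2-dimensional space of solutions to $d_1 = \cdots = d_L = 0$ with $l_0 = 0$ by $l_j = A(1 - \cos(j\om)) + D\sin(j\om)$ (the condition $l_0 = 0$ automatically kills the constant mode in the characteristic expansion), then to derive two closed-form bilinear identities expressing $l_{L+2}$ and $l_{-1}$ in terms of $l_1$ and $l_{L+1}$, and finally to combine these with non-negativity via a compactness argument.

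The central step is the master identity
\[
l_{L+2} = \frac{\sin(\frac{L+2}{2}\om)}{\sin(\frac{L}{2}\om)}\,(l_{L+1} - l_1),
\]
which I would obtain by applying formula \eqref{lj3} with $j_0 = 1$: solve for $A$ in terms of $l_1$ and $l_{L+1}$, substitute into the expression for $l_{L+2}$, and collapse the $l_1$-coefficient via sum-to-product identities (notably $\cos(\frac{L+2}{2}\om)\sin(\frac{L}{2}\om) - \sin(\frac{L+1}{2}\om)\cos(\frac{L+1}{2}\om) = -\frac{1}{2}\sin\om$). Since $\om \in (\frac{2\pi}{L+3}, \frac{2\pi}{L+2})$ places $\frac{\om}{2}, \frac{L\om}{2}, \frac{(L+1)\om}{2}, \frac{(L+2)\om}{2}$ strictly inside $(0,\pi)$, all these sines are positive, and the master identity yields the crucial consequence $l_{L+2} \geq 0 \iff l_{L+1} \geq l_1$. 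For part (ii), I would rewrite $l_{L+2} - \al l_{L+1}$ via the master identity and verify the coefficient identity $\sin(\frac{L+2}{2}\om) - \al\sin(\frac{L}{2}\om) = 2\al\cos(\frac{\om}{2})\sin(\frac{L+3}{2}\om)$, which reduces via $2\cos\om\sin(\frac{L+2}{2}\om) = \sin(\frac{L}{2}\om) + \sin(\frac{L+4}{2}\om)$ directly to the defining relation $\al(1 + 2\cos\om) = 1$; this gives \eqref{y}.

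Since $\frac{(L+3)\om}{2} \in (\pi, 2\pi)$, one has $\sin(\frac{L+3}{2}\om) < 0$ while every other trigonometric factor in \eqref{y} is positive, so both coefficients are strictly negative; thus $l_1, l_{L+1} \geq 0$ gives $l_{L+2} - \al l_{L+1} \leq 0$. Should $l_1 = l_{L+1} = 0$, the $2 \times 2$ linear map $(A,D) \mapsto (l_1, l_{L+1})$, whose determinant is $-4\sin(\frac{\om}{2})\sin(\frac{L}{2}\om)\sin(\frac{L+1}{2}\om) \neq 0$, would force $A = D = 0$, hence $l \equiv 0$, contradicting $\sum l_j = 1$. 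On the compact feasible set $\mathcal{S}_K$ of vectors satisfying \eqref{eq} together with $l_1, \ldots, l_{K+2} \geq 0$, continuity of $l_1 + l_{L+1}$ then gives a uniform positive lower bound, whence $l_{L+2} - \al l_{L+1} < -c(K)$.

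For part (i), I would apply the same machinery to $l_{-1}$: the bi-infinite extension of the recurrence at $j = 0$ gives $d_0 = \al l_{-1}$, and evaluating formula \eqref{lj3} at $j = -1$ with $j_0 = 1$ produces
\[
d_0 = -A_1\, l_1 + A_2\, l_{L+1}, \quad A_1 := \frac{\al\sin(\frac{L+2}{2}\om)}{\sin(\frac{L}{2}\om)},\quad A_2 := \frac{2\al\sin^2(\frac{\om}{2})\cos(\frac{\om}{2})}{\sin(\frac{L+1}{2}\om)\sin(\frac{L}{2}\om)},
\]
with $A_1, A_2 > 0$. The same trigonometric reduction as in (ii) yields $A_2 - A_1 = -\al\sin(\frac{L+3}{2}\om)/\sin(\frac{L+1}{2}\om) > 0$; combining with $l_{L+1} \geq l_1$ (from $l_{L+2} \geq 0$) gives $d_0 \geq (A_2 - A_1)\,l_{L+1}$. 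A contradiction argument as above shows $l_{L+1} > 0$ on $\mathcal{S}_K$, and compactness yields $d_0 \geq c(K) > 0$. The companion bound $d_{K+1} \leq -c(K)$ when $l_{K+2} = 0$ follows by applying (i) to the reversed sequence $\tilde l_j := l_{K+2-j}$, which also lies in $\mathcal{S}_K$ and satisfies $\tilde d_0 = -d_{K+1}$. The main technical obstacle is executing the two trigonometric collapses that put $l_{L+2}$ and $l_{-1}$ in clean bilinear form in $(l_1, l_{L+1})$; once those identities are in hand, both parts follow from the same non-negativity plus compactness scheme.
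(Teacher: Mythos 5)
Your proof is correct, and while part (ii) follows essentially the paper's route, your part (i) is genuinely different. For (ii), the paper also writes the solution space via \eqref{lj3} (with $j_0=L+1$ rather than your $j_0=1$), performs the same kind of product-to-sum collapse using $\al(1+2\cos\om)=1$ to reach \eqref{y}, and then lower-bounds $l_1+l_{L+1}$ by a quantitative contradiction (small $l_1,l_{L+1}$ force small $A$, hence $\sum l_j<1$) where you invoke compactness of $\mathcal{S}_K$ --- both work, since $c(K)$ is allowed to depend on $\al$ and $K$ and need not be explicit. For (i), however, the paper takes a structurally different path: it first shows $\max(l_1,l_2)\ge c$ via the growth bound $l_j\le 6^{j-1}c$ (so that $\sum_{k=1}^{L+1}l_k\ge c$), then normalizes and feeds the truncated vector $(l_0,\dots,l_{L+2})$ into Corollary \ref{corgensyst}, whose constants $c_1,\dots,c_L$ require the separate inductive argument of Proposition \ref{gensyst} on the affine system $AS(d_1,\dots,d_L)$. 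You instead exploit $d_0=\al l_{-1}$ together with a closed-form bilinear expression $d_0=-A_1l_1+A_2l_{L+1}$ and the ordering $l_{L+1}\ge l_1$ coming from $l_{K+2-\text{free}}$... more precisely from $l_{L+2}\ge0$ via your master identity; I checked your identities ($A_1=\al\sin(\tfrac{L+2}{2}\om)/\sin(\tfrac{L}{2}\om)$, $A_2-A_1=-\al\sin(\tfrac{L+3}{2}\om)/\sin(\tfrac{L+1}{2}\om)>0$, and the determinant $-4\sin(\tfrac{\om}{2})\sin(\tfrac{L}{2}\om)\sin(\tfrac{L+1}{2}\om)$) and they are all right, and the step $l_{L+1}=0\Rightarrow l_1=0\Rightarrow l\equiv0$ closes the compactness argument since $0\le l_1\le l_{L+1}$. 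Your approach buys a self-contained, fully explicit proof of this proposition that never touches Section \ref{medium}; the paper's detour through $AS(d_1,\dots,d_L)$ is not wasted in context, though, because Corollary \ref{corgensyst} is needed independently for Proposition \ref{proprang}, and its stability under nonzero $d_1,\dots,d_L$ is what Lemma \ref{stuck2} later relies on. One cosmetic caveat: at the endpoint $\al=\al_L$ allowed in (ii), $\tfrac{L+2}{2}\om=\pi$, so your statement that all the relevant half-angles lie strictly inside $(0,\pi)$ needs the obvious adjustment --- the identity \eqref{y} still holds there since $\sin(\tfrac{L+2}{2}\om)$ only appears in numerators, and the strict-sign claims are only invoked for $\al$ in the open interval.
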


\begin{pf}
Let us prove the first point:

\begin{longlist}[(iii)]
\item[(i)] Recall that $\al\in(\al_{L+1},\al_L)$. Let us first prove that $\sum_{k=1}^{L+1} l_k> c$, for some positive
constant $c$.

Recall that $l_0=0$ and $d_1=\cdots=d_K=0$. If $l_1<c$ and $l_2<c$, we can prove, by induction, that
\begin{eqnarray}
\label{inductionpds} l_j\leq 6^{j-1}c\qquad\mbox{for all } j\in
\{1,\ldots,K+2\},
\end{eqnarray}
which implies, as soon as $c$ is small enough, that $\sum_{j=1}^{K+1}l_j<1$ which contradicts the weight condition. In
order to prove \eqref{inductionpds} by induction, recall that $\al>1/3$ and notice that, for all $j\in\{2,\ldots,K+1\}$,
$d_{j-1}=-\al l_{j-2}+l_{j-1}-l_j+\al l_{j+1}=0$ and $l_{j-1}\ge0$, which implies that
\[
l_{j+1}\leq l_{j-2}+l_j/\al.
\]

Now, define, for all $j\in\{0,\ldots,L+2\}$,
\[
\tilde{l}_j:=\frac{l_j}{\sum_{k=1}^{L+1} l_k}.
\]
Now, we use Corollary \ref{corgensyst} with $(\tilde{l}_0,\ldots,
\tilde{l}_{L+2})$, which enables us to conclude
\[
d_0\ge d_0(L)\times\sum_{k=1}^{L+1}
l_k> d_0(L)\times c>0.
\]

\item[(ii)] Here, we assume $\al\in(\al_{L+1},\al_L]$.
To prove (ii), we consider the system \eqref{eq}, together with the conditions $l_1,\ldots,l_{K+2}\geq0$ and let $l_{L+1}$
be a parameter in order to show the $l_j$'s in terms of $l_{L+1}$.

We can use the arguments at the beginning of the proof of Proposition \ref{systpos}. In particular, noticing that
$\sin((L+1)\om)\neq0$, we can apply \eqref{lj3} to $j_0=L+1$, which yields, for all $j\in\{1,\ldots,K+2\}$, and for some
constant $A$,
\begin{eqnarray}
l_j&=&\frac{\sin(j\om)}{\sin((L+1)\om)}l_{L+1}-2A\frac{\sin(((L+1-j)/2)\om)\sin(j\om/2)}{\cos(((L+1)/2)\om)}.\label{lj31}
\end{eqnarray}
Then we can show $A$ in terms of $l_{L+1}$ and $l_1$:
\begin{eqnarray}
\label{lj32} 2A\frac{\sin((L/2)\om)\sin(\om/2)}{\cos(((L+1)/2)\om)}=\frac{\sin(\om)}{\sin((L+1)\om)}l_{L+1}-l_1,
\end{eqnarray}
 and compute
\begin{eqnarray*}
l_{L+2}-\al l_{L+1}&=& \biggl(\frac{\sin((L+2)\om)}{\sin((L+1)\om)}-\al
\biggr)l_{L+1}+2A\frac{\sin(((L+2)/2)\om)\sin(\om/2)}{\cos(((L+1)/2)\om)}
\\
&=& \biggl(\frac{\sin((L+2)\om)}{\sin((L+1)\om)}-\al+\frac{\sin(\om)\sin(((L+2)/2)\om)}{\sin((L+1)\om)\sin(L\om/2)} \biggr)l_{L+1}
\\
&&{}-\frac{\sin(((L+2)/2)\om)}{\sin(L\om/2)}l_1.
\end{eqnarray*}
Now, in order to prove \eqref{y}, define $E$ such that
\begin{eqnarray*}
&& \frac{\sin((L+2)\om)}{\sin((L+1)\om)}-\al+\frac{\sin(\om)\sin(((L+2)/2)\om)}{\sin((L+1)\om)\sin(L\om/2)}
\\
&&\qquad =-\frac{\al
E}{4\sin(L\om/2)\sin((L+1)\om)},
\end{eqnarray*}
then, using that $\al^{-1}=1+2\cos(\om)$ and the exponential forms of $\sin$ and $\cos$, we have
\begin{eqnarray*}
E&=& \bigl(e^{i(L/2)\om}-e^{-i(L/2)\om} \bigr) \bigl(e^{i(L+2)\om}-e^{-i(L+2)\om}
\bigr) \bigl(1+e^{i\om}+e^{-i\om} \bigr)
\\
&&{}- \bigl(e^{i(L/2)\om}-e^{-i(L/2)\om} \bigr) \bigl(e^{i(L+1)\om}-e^{-i(L+1)\om}
\bigr)
\\
&&{} + \bigl(e^{i((L+2)/2)\om}-e^{-i((L+2)/2)\om} \bigr) \bigl(e^{i\om}-e^{-i\om}
\bigr) \bigl(1+e^{i\om}+e^{-i\om} \bigr)
\\
&=&e^{(3/2)L\om} \bigl(e^{i2\om}+e^{i3\om}
\bigr)+e^{-(3/2)L\om} \bigl(e^{-i2\om}+e^{-i3\om} \bigr)
\\
&&{}-e^{(L/2)\om} \bigl(1+e^{-i\om} \bigr)-e^{-(L/2)\om}
\bigl(1+e^{i\om} \bigr)
\\
&=&4\cos \biggl(\frac{\om}{2} \biggr) \biggl(\cos \biggl( \biggl(
\frac{3}{2}L+\frac{5}{2} \biggr)\om \biggr)-\cos \biggl(
\frac{L-1}{2}\om \biggr) \biggr)
\\
&=&-8\cos \biggl(\frac{\om}{2} \biggr)\sin \biggl(\frac{L+3}{2}\om
\biggr)\sin \bigl((L+1)\om \bigr),
\end{eqnarray*}
which proves \eqref{y}.\vadjust{\goodbreak}

Let us prove the second part of (ii). Assume that $\al\in(\al_{L+1},\al_L)$, and recall that it is equivalent to
$2\pi/(L+3)<\om<2\pi/(L+2)$. Here, we use the notation $\verb?Cst?(\al,K)$ to denote a positive constant, depending on
$\al$ and $K$, which can be different from line to line or between two (in-)equalities.

Let us show that there exists a constant $\tilde{c}$ such that $l_1+l_{L+1}\ge \tilde{c}$.
It will imply the conclusion, using \eqref{y}, that is, $l_{L+2}-\al l_{L+1}\leq -\verb?Cst?(\al,K)(l_1+l_{L+1})\le
-\verb?Cst?(\al,K)$.

By contradiction, assume that $l_1+l_{L+1}< \tilde{c}$, then we obviously have $l_1<\tilde{c}$ and $l_{L+1}<\tilde{c}$.
Formula \eqref{lj32} implies that
\[
0\leq 2A<\verb?Cst?(\al,K)\times \tilde{c},
\]
then \eqref{lj31} implies that
\[
\sum_{j=1}^{K+1}l_j<(K+1)
\verb?Cst?(\al,K)\times \tilde{c}.
\]
Now $\tilde{c}$ small enough would imply $\sum_{j=1}^{K+1}l_j<1$, and thus contradicts the weight condition.\quad\qed
\end{longlist}\noqed
\end{pf}


\section{The walk has finite range: Proof of Proposition 
\texorpdfstring{\protect\ref{proprang}}{2.1}} \label{finiterange}


In the section, we prove Proposition \ref{proprang}, which states that the walk has finite range.
The general strategy of the proof is inspired by the one of \cite{PV}. This proof is
quite simple, making use of Corollary \ref{corgensyst}.

\begin{pf*}{Proof of Proposition \protect\ref{proprang}}
Let us prove that there exists a constant $p>0$ such that, for all $x\in\mathbb{Z}$ such that $x<-L-1$, if the walker
visits $x+L+1$, then, with probability at least $p$, the walker does not visit the site $x-1$, that is,
\[
\Pb (x-1\in R|x+L+1\in R )<1-p.
\]
Note that $p$ will not depend on $x$.

This will imply that
\[
\Pb (\inf R=-\infty )=0.
\]
Then the symmetric argument will hold and imply the conclusion.

For all $y\in\mathbb{Z}$ and $m\in\mathbb{N}$, let $u_m(y)$ be the first time $k$ such that $Z_k(y)=m$, that is,
\begin{eqnarray*}
u_m(y):=\inf \bigl\{k\in\mathbb{N}\dvtx Z_k(y)=m \bigr
\}.
\end{eqnarray*}
If
$Z_\infty(y)<m$, then $u_m(y)=+\infty$.

Now fix $x\in\mathbb{Z}$ such that $x<-L-1$. For all $m\in\mathbb{N}$, we have
\[
\Pb (X_{k+1}=x+L+1 |\Fk )\ge\frac{\1_{ \{u_m(x+L)=k,Z_k(x+L-1)=0 \}}}{1+e^{2(2m-1)\beta}}.
\]
Together with
\begin{eqnarray*}
&& \bigl\{Z_{u_{m}(x+L)}(x+L-1)=0 \bigr\}
\\
&&\qquad =\bigcap_{k=1}^{m-1} \bigl
\{X_{u_k(x+L)+1}=x+L+1, u_k(x+L)<\infty \bigr\}\cup \bigl
\{u_k(x+L)=+\infty \bigr\},
\end{eqnarray*}
this implies that, for all $m\in\mathbb{N}$,
\begin{eqnarray*}
\Pb \bigl(Z_{u_{m}(x+L)}(x+L-1)=0|\mathcal{F}_{u_1(x+L)} \bigr) &\ge&
\1_{ \{u_1(x+L)<\infty \}}\eps_1,
\end{eqnarray*}
where $\eps_1:=\eps_1(m)$ is a constant depending on $m$, but not on $x$.

From now on, we work on the event $ \{Z_{u_{m}(x+L)}(x+L-1)=0 \}$. Notice that, on this event, for any
$n\in\mathbb{N}$, $u_n(x)\ge u_1(x)>u_m(x+L)$.

Define, for all $k\in\mathbb{N}$, the time spent in the interval $[x,x+L+1]$, that is,
\begin{eqnarray}
l_k^{(x)}:=l_k(x)+l_k(x+1)+
\cdots+l_k(x+L+1),\label{deflx111}
\end{eqnarray}
and notice that $l_{u_m(x+L)}^{(x)}=2m-1$.\vspace*{1.5pt}

Fix $\eps_2>0$, and define, for all $n\in\mathbb{N}$, the event $U_n$ which is such that the walker performs an upstream
jump to the left on $\{x+1,\ldots,x+L\}$ of intensity at least $\eps_2$ between the times $u_m(x+L)$ and $u_n(x)$, that is,
\begin{eqnarray*}
U_n&:=&\biggl\{\exists k\in \bigl[u_m(x+L),u_n(x)
\bigr], \exists j\in\{1,\ldots,L\}\dvtx \frac{\Delta_k(x+j)}{l_k^{(x)}}>\eps_2,
\\
&&\hspace*{117pt} X_k=x+j\mbox{ and }X_{k+1}=x+j-1 \biggr\}.
\end{eqnarray*}
If we define $u_\infty(x)=\infty$, then $\bigcup_nU_n=U_\infty$.

Now, notice that, on the event $ \{Z_{u_{m}(x+L)}(x+L-1)=0 \}$, we have
\begin{eqnarray}\label{incluuinf}
&& \bigl\{Z_\infty(x-1)>0 \bigr\}\nonumber
\\
&&\qquad \subset \bigcup
_{n=1}^\infty \bigl\{u_1(x-1)=
u_n(x)+1 \bigr\}\cap \bigl\{u_n(x)<\infty \bigr\}
\\
&&\qquad \subset \Biggl(\bigcup_{n=1}^\infty \bigl
\{u_1(x-1)=u_n(x)+1 \bigr\}\cap \bigl\{u_n(x)<
\infty \bigr\}\cap U_n^c \Biggr)\cup U_\infty.\nonumber
\end{eqnarray}

It is easy to upper-bound the probability of $U_\infty$ by writing
\[
\Pb (U_\infty |\mathcal{F}_{u_m(x+L)} )\le\sum
_{k=l_{u_m(x+L)}^{(x)}}^{+\infty} e^{-2\beta\eps_2 k}.
\]
Then let us focus on the other union of events in \eqref{incluuinf}.
Until the conclusion of the proof, fix $n\in\mathbb{N}$, assume that $u_n(x)<+\infty$ and $l_{u_n(x)}(x)=0$, which is
necessary to have $u_1(x-1)=u_n(x)+1$. Moreover, assume that $U_n^c$ holds and let us show that, at time $u_n(x)$, the
stream at $x$ is strongly positive, thus the probability for the walker to jump on $x-1$ is small.

For $j\in\{1,\ldots,L\}$, let $k_j$ denote the last time before $u_n(x)$ such that $X_{k_j}=x+j$, that is,
\begin{eqnarray*}
k_j:=\max \bigl\{k\le u_n(x)\dvtx X_k=x+j
\bigr\}.
\end{eqnarray*}
Then, as $X_{k_j+1}=x+j-1$ and
$U_n^c$ holds, we have $\Delta_{k_j}(x+j)/l_{k_j}^{(x)}\le\eps_2$. Therefore, we have that, for any $j\in\{1,\ldots,L\}$,
\begin{eqnarray*}
\frac{\Delta_{u_n(x)}(x+j)}{l_{u_n(x)}^{(x)}}&=&\frac{-\alpha
 (l_{u_n(x)}(x+j-1)-l_{k_j}(x+j-1) )+\Delta_{k_j}(x+j)+1}{l_{u_n(x)}^{(x)}}
\\
&\le&2\eps_2,
\end{eqnarray*}
where the inequality holds as soon as we choose $m>(1+\eps_2)/(2\eps_2)$.

Moreover, recall that $l_{u_n(x)}(x)=0$ and notice, using \eqref{deflx111}, that
\[
\sum_{j=1}^{L+1} \frac{l_{u_n(x)}(x+j)}{l_{u_n(x)}^{(x)}}=1.
\]
Therefore, using Corollary \ref{corgensyst}, with $l_j:=l_{u_n(x)}(x+j)/l_{u_n(x)}^{(x)}$ for all $j\in\{0,\ldots,L+2\}$, we
obtain
\begin{eqnarray*}
\frac{\Delta_{u_n(x)}(x)}{l_{u_n(x)}^{(x)}}&\ge&d_0(L)-\sum_{k=1}^L
\frac{\Delta_{u_n(x)}(x+k)}{l_{u_n(x)}^{(x)}}c_{L+1-k}
\\
&\ge&d_0(L)-2\eps_2\sum_{k=1}^Lc_k>
\frac{d_0(L)}{2}>0,
\end{eqnarray*}
as soon as $\eps_2$ is small enough, depending only on the constants $c_k$'s and $d_0(L)$, where the $c_k$'s are
positive constants defined in Proposition \ref{gensyst} and $d_0(L)$ is defined in Remark \ref{notationd0}.

Using \eqref{incluuinf} and using that $l_{u_m(x+L)}^{(x)}=2m-1$, we are finally able to conclude, as soon as $m$ is
large enough, as we have
\begin{eqnarray*}
&& \Pb \bigl(Z_{\infty}(x-1)>0 |\mathcal{F}_{u_m(x+L)} \bigr)
\1_{ \{Z_{u_m(x+L)}(x+L-1)=0 \}}
\\
&&\qquad \le \sum_{k=l_{u_m(x+L)}^{(x)}}^{+\infty}
e^{-2\beta(d_0(L)/2)k}+\sum_{k=l_{u_m(x+L)}^{(x)}}^{+\infty}
e^{-2\beta\eps_2 k}.
\end{eqnarray*}\upqed
\end{pf*}


\section{Asymptotic streams} \label{aswei}


Given $L\ge1$, assume that $\al\in(\al_{L+1},\al_L)$ and fix $x\in\mathbb{Z}$ and $K\in\mathbb{N}$. The goal is here to
prove Proposition \ref{lemdeltak1} in Section~\ref{aswei1}, and Lemma \ref{somfinie} in Section~\ref{aswei2}.


\subsection{Proof of Proposition \texorpdfstring{\protect\ref{lemdeltak1}}{2.2}} \label{aswei1}


Recall that Proposition \ref{lemdeltak1} states that, almost surely, we have
\begin{eqnarray}
\label{butprop} \bigl\{R'=\{x,\ldots,x+K+1\} \bigr\} \subset \bigcap
_{j=x+1}^{x+K} \biggl\{\lim
_{k\rightarrow+\infty} \frac{\Delta_k(j)}{k}=0 \biggr\}.
\end{eqnarray}

Proposition \ref{lemdeltak1} extends to general $K\in\mathbb{N}$ the result of Erschler, T\'oth and Werner in
\cite{ETW}, initially proved for $K<L+1$. Its proof is mainly combinatorial. Let us first state and prove two lemmas that will be used several times to prove Proposition~\ref{lemdeltak1}.

The only probabilistic tool of the proof is Borel--Cantelli lemma, which is used to prove the following result.

\begin{lem}\label{stuck} Let $\eps>0$. There exist two random times $N_1$ and $N_2$ depending on $\eps$ such that:

\begin{longlist}[(ii)]
\item[(i)] For all $j\in\mathbb{Z}$, and for all $k\ge N_1$, if $\Delta_k(j)/k\ge\eps$ and $X_k=j$, then $X_{k+1}=j+1$, and
if $\Delta_k(j)/k\le-\eps$ and $X_k=j$, then $X_{k+1}=j-1$.

\item[(ii)] For all $k\ge N_2$, for all $j\in\mathbb{Z}$, if $X_k<j$, then $\Delta_k(j)/k<3\eps/2$ and if \mbox{$X_k>j$}, then
$\Delta_k(j)/k>-3\eps/2$.
\end{longlist}
\end{lem}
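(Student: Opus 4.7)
The plan for (i) is a direct Borel--Cantelli argument on the transition probabilities. From \eqref{ec1}, whenever $X_k = j$ and $\Delta_k(j)/k \ge \eps$ one has
\begin{equation*}
\Pb\bigl(X_{k+1} = j-1 \,\big|\, \Fk\bigr) = \frac{1}{1+e^{2\beta\Delta_k}} \le e^{-2\beta k\eps},
\end{equation*}
so the probability of a ``wrong'' jump at a fixed vertex is exponentially small and summable in $k$. For each fixed $j$, Borel--Cantelli then gives that the event $\{X_k=j,\,X_{k+1}=j-1,\,\Delta_k(j)/k \ge \eps\}$ occurs only finitely often almost surely; taking a countable union over $j \in \mathbb{Z}$ and invoking Proposition \ref{proprang} to ensure that only finitely many sites are ever visited (so that the supremum over $j$ of the last bad time is finite) yields a finite random $N_1$. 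The symmetric statement about $\Delta_k(j)/k \le -\eps$ forcing a right jump follows identically.

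For (ii), I would deduce the claim from (i) via a purely deterministic bookkeeping of how $\Delta_k(j)$ can evolve after the last visit of the walker to $j$. Fix $j$ and a time $k \ge N_1$ with $X_k < j$, and let $m \le k$ denote the last time $X_m = j$. If no such $m$ exists, then the walker has stayed strictly left of $j$, which forces $j > 0$ and $l_k(j) = l_k(j+1) = l_k(j+2) = 0$, so $\Delta_k(j) = -\alpha l_k(j-1) \le 0$ and the bound is immediate. Otherwise, since $m$ is the last visit and $X_k < j$, one must have $X_{m+1} = j-1$ and the walker remains $\le j-1$ throughout $(m,k]$; hence $l_k(j+1) = l_m(j+1)$, $l_k(j+2) = l_m(j+2)$, $l_k(j) = l_m(j) + 1$ and $l_k(j-1) \ge l_m(j-1)$, which gives
\begin{equation*}
\Delta_k(j) - \Delta_m(j) = 1 - \alpha\bigl(l_k(j-1) - l_m(j-1)\bigr) \le 1.
\end{equation*}
Applying (i) to the jump $j \to j-1$ at time $m$ forces $\Delta_m(j) < m\eps \le k\eps$ as soon as $m \ge N_1$; the residual case $m < N_1$ is absorbed via the crude bound $|\Delta_m(j)| \le C N_1$ (with $C$ depending only on $\alpha$), since every local time at time $m$ is at most $m$. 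One then chooses $N_2 = N_2(N_1,\eps)$ large enough so that the resulting $O(1)/k$ error is below $\eps/2$, yielding $\Delta_k(j)/k < 3\eps/2$. The other direction of (ii) is fully symmetric.

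The main obstacle is the subtlety in deducing (ii) from (i): the quantity $\Delta_k(j)$ is a signed combination of local times, a priori non-monotone in $k$, and (i) delivers information only at those times when $X_k = j$. The key observation that makes the reduction work is that once the walker has definitively left $j$ to the left, the local times on the two edges $\{j,j+1\}$ and $\{j+1,j+2\}$ are \emph{frozen}, so the only way $\Delta_k(j)$ can grow from $\Delta_m(j)$ is through the single unit contributed by the final left jump, while the increasing $l_k(j-1)$ on the far side gives a nonpositive correction. This freezing phenomenon isolates the increment exactly and converts the per-step statement of (i) into the uniform-over-$k$ statement of (ii).
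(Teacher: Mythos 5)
Your proof is correct and follows essentially the same route as the paper: Borel--Cantelli applied to the exponentially small probability of jumping against a stream of strength $\eps k$ for (i), and the last-visit "freezing" argument (only $l(j)$ gains one unit and $l(j-1)$ can only push $\Delta(j)$ down after the final departure to the left) for (ii). The only cosmetic difference is that for uniformity in $j$ in part (i) the paper does not need to invoke Proposition \ref{proprang}: since $X_k$ occupies a single site at each time, the union over all $j\in\mathbb{Z}$ of the bad events at time $k$ has conditional probability at most $2/(1+e^{2\beta\eps k})$, so a single application of Borel--Cantelli already yields a finite $N_1$.
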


\begin{pf}
Let us prove the first point.

Recall that, for all $j\in\mathbb{Z}$, we have
\[
\Pb (X_{k+1}=X_k+1 | \Fk )=\frac{1}{1+e^{-2\beta\Delta_k(X_k)}},
\]
which yields
\[
\Pb \biggl(X_k=j,X_{k+1}=j+1,\frac{\Delta_k(j)}{k}\le -\eps \Big|
\Fk \biggr)\le\frac{\mathds{1}_{\{X_k=j\}}}{1+e^{2\beta\eps k}}.
\]
Therefore, we have that
\begin{eqnarray*}
\Pb \biggl(\bigcup_{j\in\mathbb{Z}} \biggl
\{X_k=j,X_{k+1}=j\pm1,\frac{\mp\Delta_k(j)}{k}\ge \eps \biggr\} \Big|
\Fk \biggr)\le\frac{2}{1+e^{2\beta\eps k}},
\end{eqnarray*}
which is summable. Then, by Borel--Cantelli lemma, there exists a random $N_1\in\mathbb{N}$ such that for all
$j\in\mathbb{Z}$, and for all $k\ge N_1$, if $\Delta_k(j)/k\ge\eps$ and $X_k=j$, then $X_{k+1}=j+1$, and if
$\Delta_k(j)/k\le-\eps$ and $X_k=j$, then $X_{k+1}=j-1$.

Let us prove the second point of the lemma. Let $N_2$ be the smallest integer such that $N_2>2(1+\al) N_1/3\eps>N_1$
and $N_2>4/\eps$.

Assume that $k\ge N_2$ and $X_k<j$. Recall that, by definition of $N_1$, the walker cannot jump from $j$ to $j-1$ at
some time $k\ge N_1$ if $\Delta_k(j)/k\ge\eps$. Recall also the definition of the local stream at site $j$ and at time
$k$:
\[
\Delta_k(j)=-\alpha l_k(j-1)+l_k(j)-l_k(j+1)+
\alpha l_k(j+2).
\]

First, if the walker has not visited $j$ between $N_1$ and $k$, then $\Delta_{k}(j)\le\Delta_{N_1}(j)\le
N_1\times(1+\al)$, hence
\[
\frac{\Delta_{k}(j)}{k}\le\frac{\Delta_{N_1}(j)}{N_1}\times\frac{N_1}{N_2}<
\frac{3\eps}{2}.
\]
Otherwise, let $k'$ be the last time before $k$ such that $X_{k'}=j$. Then $X_{k'+1}=j-1$, and therefore
$\Delta_{k'}(j)/k'<\eps$. Note that $\Delta_k(j)\le \Delta_{k'}(j)+1$, hence
\[
\frac{\Delta_{k}(j)}{k}<\frac{3\eps}{2},
\]
which proves the first part of (ii) and the second part is proved by the symmetric argument.
\end{pf}

Recall that $L\ge1$, $\al\in(\al_{L+1},\al_L)$, $K\in\mathbb{N}$ are fixed. The following lemma uses the results of
Section~\ref{trigogo} in order to describe the evolutions of the local times at two sites when the walk is stuck between them.

\begin{lem} \label{stuck2}
Fix $i_1,i_2\in\mathbb{Z}$ such that $0<i_2-i_1<K$. Fix two times $k$ and $k'$ such that $k<k'$.

Assume that the walk remains in $\{i_1,\ldots,i_2+1\}$ between the time $k$ and $k'$ and define
\[
\delta_{\max}:=\max \biggl\{\biggl\llvert \frac{\Delta_n(i)}{n}\biggr
\rrvert\dvtx n\in\bigl[k,k'\bigr],i\in\{i_1+1,
\ldots,i_2\} \biggr\}.
\]
Then there exist two positive constants $d$ and $C_1$, depending on $\al$ and $K$, such that
\begin{eqnarray*}
\frac{\Delta_{k'}(i_1+m(i_2-i_1))}{k'}&>&\frac{\Delta_{k}(i_1+m(i_2-i_1))}{k}\times\frac{k}{k'}
\\
&&{}+d\times\frac{k'-k}{k'}-2C_1\delta_{\max},
\\
\frac{\Delta_{k'}(i_2+1-m(i_2-i_1))}{k'}&<&\frac{\Delta_{k}(i_2+1-m(i_2-i_1))}{k}\times\frac{k}{k'}
\\
&&{}-d\times\frac{k'-k}{k'}+2C_1\delta_{\max},
\end{eqnarray*}
where $m(k):= (k+1)\1_{\{k< L\}}$, for all $k\in\mathbb{N}$.
\end{lem}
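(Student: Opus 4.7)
The approach is to interpret the local-time increments during the confinement interval $[k,k']$ as a non-negative solution of a perturbed version of the linear system of Section \ref{trigogo}, and then control the boundary stream increments via Propositions \ref{systpos}, \ref{gensyst} and \ref{system}. Set $K':=i_2-i_1\in\{1,\dots,K-1\}$ and $N:=k'-k$. Since the walker stays in $\{i_1,\dots,i_2+1\}$ during $[k,k']$, the edges outside the interval (namely $l(i_1-1)$, $l(i_1)$, $l(i_2+2)$, $l(i_2+3)$) do not change. Define $\bar l_j:=(l_{k'}(i_1+j)-l_k(i_1+j))/N$ for $j\in\{-1,\dots,K'+3\}$, so that $\bar l_j\geq 0$ for all $j$, $\bar l_{-1}=\bar l_0=\bar l_{K'+2}=\bar l_{K'+3}=0$, and $\sum_{j=1}^{K'+1}\bar l_j=1$. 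Setting $\bar d_r:=-\alpha\bar l_{r-1}+\bar l_r-\bar l_{r+1}+\alpha\bar l_{r+2}$, a direct computation gives $\Delta_{k'}(i_1+r)-\Delta_k(i_1+r)=N\bar d_r$ for every $r\in\{0,\dots,K'+1\}$, hence, writing $\eps:=N/k'$,
$$\frac{\Delta_{k'}(i_1+r)}{k'}-\frac{\Delta_k(i_1+r)}{k}\cdot\frac{k}{k'}=\eps\bar d_r,$$
and the hypothesis on $\delta_{\max}$ gives $|\eps\bar d_r|\le 2\delta_{\max}$ for every interior $r\in\{1,\dots,K'\}$.

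If $K'<L$, so that $m(K')=K'+1$, the two desired inequalities concern $\bar d_{K'+1}$ and $\bar d_0$ respectively. By Proposition \ref{systpos}, the unperturbed system (all $\bar d_r=0$ for $r=1,\dots,K'$) has a unique solution with $d_0(K')<0$ and $d_{K'+1}(K')=-d_0(K')>0$. The proof of Proposition \ref{gensyst}, whose only ingredient is the invertibility of the matrix associated to the system (itself a consequence of uniqueness in Proposition \ref{systpos}), extends verbatim to any $K'\in\{0,\dots,L+1\}$ and provides affine expansions
$$\bar d_0=d_0(K')-\sum_{k=1}^{K'}c_{K'+1-k}^{(K')}\bar d_k,\qquad \bar d_{K'+1}=-d_0(K')-\sum_{k=1}^{K'}c_k^{(K')}\bar d_k,$$
with coefficients depending only on $\alpha$ and $K'$. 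Multiplying by $\eps$ and using $|\eps\bar d_r|\le 2\delta_{\max}$ gives both claimed inequalities, with $d$ any uniform lower bound of $-d_0(K')>0$ over $K'\in\{1,\dots,L-1\}$ and $C_1$ any uniform upper bound of $\sum_k|c_k^{(K')}|$.

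If $K'\ge L$, so that $m(K')=0$, one must instead bound $\bar d_0$ from below and $\bar d_{K'+1}$ from above. Following the strategy of Proposition \ref{system}, I truncate: set $s:=\sum_{j=1}^{L+1}\bar l_j$ and, assuming $s>0$, define $\tilde l_j:=\bar l_j/s$ for $j\in\{0,\dots,L+2\}$. Then $(\tilde l_j)$ satisfies the hypotheses of Corollary \ref{corgensyst} with perturbations $\tilde d_r=\bar d_r/s$, whence
$$\bar d_0=s\tilde d_0\ge s\,d_0(L)-\sum_{k=1}^{L}c_{L+1-k}\bar d_k.$$
A lower bound $s\ge c_0$ with $c_0>0$ depending only on $\alpha$ and $K$ is obtained by replicating the first part of the proof of Proposition \ref{system}: if $\bar l_1$ and $\bar l_2$ were both below a threshold, iterating $\alpha\bar l_{j+2}=\bar l_{j+1}-\bar l_j+\alpha\bar l_{j-1}+\bar d_{j-1}$ with $\bar l_j\ge 0$ and $|\bar d_r|\le 2\delta_{\max}/\eps$ would force $\sum_j\bar l_j<1$, contradicting normalization. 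This argument requires $\delta_{\max}/\eps$ to be below an explicit threshold depending on $\alpha$ and $K$; in the opposite regime the claimed inequalities hold trivially provided $C_1$ is chosen large enough, since $|\bar d_r|\le 1+\alpha$. Multiplying by $\eps$ yields $\eps\bar d_0\ge d\eps-2C_1\delta_{\max}$ with $d:=c_0\,d_0(L)$. The symmetric upper bound on $\bar d_{K'+1}$ follows from the same argument applied to the reversed vector $\hat l_j:=\bar l_{K'+2-j}$, for which a direct check gives $\hat d_r=-\bar d_{K'+1-r}$.

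The main obstacle is the non-uniqueness of solutions to the linear system for $K'\ge L+2$ outside the generic irrational-$\omega$ case, which prevents a direct affine perturbation as in Case~A; this is circumvented by the truncation step, which reduces the analysis to the $K=L$ case where Corollary \ref{corgensyst} applies. The quantitative lower bound $s\ge c_0$ under perturbations is the main technical step, and is combined with the vacuous-regime observation to ensure that $d$ and $C_1$ can be chosen uniformly over $K'\in\{1,\dots,K-1\}$.
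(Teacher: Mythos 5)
Your proposal is correct and follows essentially the same route as the paper: normalize the local-time increments over $[k,k']$ by $k'-k$, view them as a perturbed non-negative solution of $(E_{i_2-i_1})$ with perturbations bounded by $2\delta_{\max}k'/(k'-k)$, and split into the cases $i_2-i_1\le L-1$ (affine expansion of the boundary $d$'s from Propositions \ref{systpos}--\ref{gensyst}) and $i_2-i_1\ge L$ (Proposition \ref{system} via truncation to size $L$ and Corollary \ref{corgensyst}). The only difference is that you spell out the quantitative stability of Proposition \ref{system} under perturbation (the lower bound on $\sum_{j=1}^{L+1}\bar l_j$ and the vacuous large-perturbation regime), which the paper leaves implicit when it invokes that proposition for the perturbed system.
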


Before proving this result, let us emphasize two simple consequences. First, notice that
\begin{eqnarray*}
\frac{\Delta_{k'}(i_1+m(i_2-i_1))}{k'}&>&\min \biggl(\frac{\Delta_{k}(i_1+m(i_2-i_1))}{k},d \biggr)-2C_1
\delta_{\max},
\\
\frac{\Delta_{k'}(i_2+1-m(i_2-i_1))}{k'}&<&\max \biggl(\frac{\Delta_{k}(i_2+1-m(i_2-i_1))}{k},-d \biggr)
+2C_1\delta_{\max}.
\end{eqnarray*}
Second, if
\[
\frac{k'-k}{k}\ge \frac{2C_1\delta_{\max}}{d},
\]
then
\begin{eqnarray}
\frac{\Delta_{k'}(i_1+m(i_2-i_1))}{k'}&>&\frac{\Delta_{k}(i_1+m(i_2-i_1))}{k}\times\frac{k}{k'},\label{LB}
\\
\frac{\Delta_{k'}(i_2+1-m(i_2-i_1))}{k'}&<&\frac{\Delta_{k}(i_2+1-m(i_2-i_1))}{k}\times\frac{k}{k'}.\label{UB}
\end{eqnarray}

\begin{pf*}{Proof of Lemma \ref{stuck2}}
Define
\begin{eqnarray*}
\widetilde{l_i}&=&\frac{l_{k'}(i_1+i)-l_k(i_1+i)}{k'-k}\qquad\mbox{for all } i\in\{0,
\ldots,i_2-i_1+2\}
\end{eqnarray*}
and
\begin{eqnarray*}
\widetilde{d_i}=-\al \tilde{l}_{i-1}+
\tilde{l}_i-\tilde{l}_{i+1}+\al \tilde{l}_{i+2}
\qquad\mbox{for all } i\in\{1,\ldots,i_2-i_1\}.
\end{eqnarray*}
Thus,
\[
\tilde{l}_0=\tilde{l}_{i_2-i_1+2}=0,\qquad \sum
_{i=1}^{i_2-i_1+1}\tilde{l}_i=1
\]
and
\[
\quad \llvert \widetilde{d}_i\rrvert <2\delta_{\max}\times
\frac{k'}{k'-k}\qquad\forall i \in\{1,\ldots,i_2-i_1\}.
\]

Define also $\widetilde{d}_0=-\tilde{l}_1+\al\tilde{l}_{2}$ and
$\widetilde{d}_{i_2-i_1+1}=-\al\tilde{l}_{i_2}+\tilde{l}_{i_2+1}$.

Recall the notation $d_0(K')$ and $d_{K'+1}(K')$ defined in Remark \ref{notationd0}, $c(K')$ defined in
Proposition \ref{system} and  define
\begin{eqnarray}\label{defd}
d &:=& \min \bigl\{ \bigl(d_{K'+1} \bigl(K'
\bigr)=-d_0 \bigl(K' \bigr), K'=1,\ldots,L-1 \bigr),
\nonumber\\[-8pt]\\[-8pt]\nonumber
&&\hspace*{150pt} c(L),\ldots,c(K) \bigr\}>0.
\end{eqnarray}

Let us consider $(\tilde{l}_i)$ as a perturbed solution of the linear system $(E_{i_2-i_1})$ defined and described in Section~\ref{trigogo}. Then we have to distinguish two cases:
$i_2-i_1\le L-1$ and $i_2-i_1\ge L$.

Indeed, if $i_2-i_1\le L-1$, using Proposition \ref{systpos} and using that $(E_{i_2-i_1})$ is a linear system, there exists a positive constant $C_1$ such that
\begin{eqnarray*}
\widetilde{d}_{0}&<&-d+C_1\times2\delta_{\max}
\times \frac{k'}{k'-k},
\\
\widetilde{d}_{i_2-i_1+1}&>&d-C_1\times2\delta_{\max}
\times \frac{k'}{k'-k}.
\end{eqnarray*}

Otherwise, if $i_2-i_1\ge L$, using Proposition \ref{system}, we have similarly
\begin{eqnarray*}
\widetilde{d}_{0}&>&d-C_1\times2\delta_{\max}
\times \frac{k'}{k'-k},
\\
\widetilde{d}_{i_2-i_1+1}&<&-d+C_1\times2\delta_{\max}
\times \frac{k'}{k'-k}.
\end{eqnarray*}
We can conclude by multiplying these quantities by $(k'-k)/k'$ and noticing that, for $i\in\{0,i_2-i_1+1\}$,
\[
\widetilde{d}_i:=\frac{\Delta_{k'}(i_1+i)-\Delta_{k}(i_1+i)}{k'-k}.
\]\upqed
\end{pf*}

Let us now prove Proposition \ref{lemdeltak1},  that is, \eqref{butprop}.

\begin{pf*}{Proof of Proposition \ref{lemdeltak1}}
We assume throughout the proof that the probability event $ \{R'=\{x,\ldots,x+K+1\} \}$ holds.

Let us show that there exists a constant $M:=M(\al,d,C_1)$ such that, for all $j\in\{1,\ldots,K\}$, for all $\eps>0$
small enough, we have
\begin{equation}
\label{deltkgrd} \frac{\Delta_k(x+j)}{k}>-3\eps M^{j(j-1)/2}
\end{equation}
as soon as $k$ is large enough. Then the symmetric argument will hold and enable us to conclude. Note that, in fact,
the constant $M$ depends only on $\al$ and $K$, since $d$ and $C_1$ depend only on $\al$ and $K$.

For reasons that will become clear later, we choose $M$ and $\eps$ such that
\begin{equation}
\label{defMeps} M>\frac{3\times(d+8C_1)}{d}\times(3+\al)\quad\mbox{and}\quad \eps<
\frac{2(1+\al)\wedge d}{3M^{K(K+1)}}.
\end{equation}

As $R'=\{x,\ldots,x+K+1\}$, there exists a random $N_3\in\mathbb{N}$ such that $Z_\infty (i)=Z_{N_3}(i)$ if $i\notin
R'$.

Finally, recalling the definitions of $N_1$ and $N_2$ in Lemma \ref{stuck}, let us define the time $N:=\max (N_1,N_2, N_3, (1+\al)/\eps )$. We work at large times $k\ge N$, and the rest of the
proof is only combinatorial.

By definition of $R'$, there exists $k_1\ge N$ such that $X_{k_1}=x+K+1$. Therefore, using Lemma \ref{stuck}, for all
$j\in\{1,\ldots,K\}$,
\[
\frac{\Delta_{k_1}(x+j)}{k_1}>-2\eps.
\]
Let us prove by induction on $j=1,\ldots,K$, that, for all $k\ge k_1$, we have
\[
\frac{\Delta_{k}(x+j)}{k}>-3\eps M^{j(j-1)/2}.
\]

\begin{longlist}[(ii)]
\item[(i)] \emph{The base case}: $j=1$. Assume by contradiction that there exists $\kc\ge k_1$ such that
\[
\frac{\Delta_{\kc}(x+1)}{\kc}\le-3\eps,
\]
and define
\[
\kt:=\max \biggl\{k\in[k_1,\kc]\dvtx \frac{\Delta_k(x+1)}{k}>-2\eps
\biggr\}.
\]
Hence, using Lemma \ref{stuck} between $\kt$ and $\kc$, the walk remains left-hand from $x+1$, and thus stays confined to
$\{x,x+1\}$, therefore,
\[
\frac{\Delta_{\kc}(x+1)}{\kc}=\frac{\Delta_{\kt}(x+1)+(\kc-\kt)}{\kt+(\kc-\kt)}>-2\eps,
\]
which contradicts our assumption.

\item[(ii)] \emph{Induction step}. Assume that $j\in\{1,\ldots,K-1\}$, and that, for all $i\in\{1,\ldots,j\}$ and $k\ge k_1$, we
have
\begin{equation}
\label{hyprec1} \frac{\Delta_k(x+i)}{k}>-3\eps M^{i(i-1)/2}.
\end{equation}
We want to show that, for all $k\ge k_1$,
\[
\frac{\Delta_k(x+j+1)}{k}>-3\eps M^{j(j+1)/2}.
\]
By contradiction, assume that there exists a finite integer $k^*$ such that
\begin{equation}
\label{defkstar} k^*:=\min \biggl\{k\ge k_1\dvtx \frac{\Delta_{k}(x+j+1)}{k}
\le-3 \eps M^{j(j+1)/2} \biggr\}<\infty.
\end{equation}
By definition of $R'$, there exists $k_2>k^*$ such that $X_{k_2}=x+K+1$, and, using Lemma \ref{stuck},
\[
\frac{\Delta_{k_2}(x+j+1)}{k_2}>-2\eps.
\]
Define the following times:
\begin{eqnarray}
\ks&:=&\max_k \biggl\{k_1\le k \le k^*\dvtx
\frac{\Delta_k(x+j+1)}{k}>-3\eps M^{j(j-1)/2} \biggr\},\label{defks}
\\
\kb&:=&\min_k \biggl\{k^*\le k \le k_2\dvtx
\frac{\Delta_k(x+j+1)}{k}>-3\eps M^{j(j-1)/2} \biggr\}.\label{kb}
\end{eqnarray}

\begin{figure}

\includegraphics{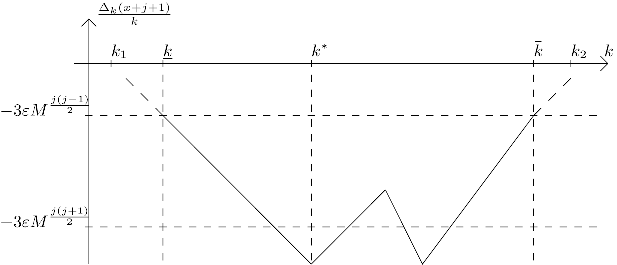}

\caption{Local stream at $x+j+1$.}\label{dessin2}
\end{figure}
Note that the definitions of $k_1$, $\ks$, $k^*$, $\kb$ and $k_2$ will be kept throughout the proof. All other
notation used to define some other times change from part to part.

We need the following lemma, which will be shown right after this proof.
\end{longlist}

\begin{lem}\label{deltaki}
Under the previous assumptions, in particular assuming \eqref{hyprec1} and \eqref{defkstar}, for all $i\in\{1,\ldots,j\}$,
and for all $k$ such that $\ks\le k \le\kb$, we have
\[
\biggl\llvert \frac{\Delta_k(x+i)}{k}\biggr\rrvert <3\eps M^{(j(j-1)/2)+(j-i)}.
\]
\end{lem}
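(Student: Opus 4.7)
The lower bound is immediate: for $i \in \{1,\dots,j\}$ one has $i(i-1)/2 \le j(j-1)/2 \le j(j-1)/2+(j-i)$, so the induction hypothesis \eqref{hyprec1} gives
\[
\frac{\Delta_k(x+i)}{k} > -3\eps M^{\frac{i(i-1)}{2}} \ge -3\eps M^{\frac{j(j-1)}{2}+(j-i)}.
\]
The substantive task is the matching upper bound.

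The first preparatory step is a confinement observation. By the definitions of $\ks$ and $\kb$, on $(\ks,\kb)$ we have $\Delta_k(x+j+1)/k \le -3\eps M^{j(j-1)/2} \le -\eps$, so Lemma \ref{stuck}$(i)$ forbids any jump $x+j+1\to x+j+2$ on this interval once $k \ge N_1$. Combined with the fact that the walker stays in $R'=\{x,\dots,x+K+1\}$ after $N_3$, this forces the walker, once inside $\{x,\dots,x+j+1\}$, to remain there until $\kb$. A direct comparison of $\Delta_\ks(x+j+1)/\ks > -3\eps M^{j(j-1)/2}$ with the much more negative value $\Delta_{k^*}(x+j+1)/k^* \le -3\eps M^{j(j+1)/2}$ shows that $\Delta_k(x+j+1)$ must actually strictly decrease between $\ks$ and $k^*$, which requires crossings of the edges $\{x+j,x+j+1\}$ or $\{x+j-1,x+j\}$; in particular the walker enters $\{x,\dots,x+j+1\}$ at some $k_{\mathrm{ent}} \in [\ks,k^*]$ and remains there up to $\kb$.

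The upper bound is then proved by a secondary downward induction on $i$, from $i=j$ to $i=1$. The geometric loss of a factor $M$ per step is natural because the error term $2C_1\delta_{\max}$ in Lemma \ref{stuck2} must be absorbed at each application; the calibration $M > 3(d+8C_1)/d \cdot (3+\alpha)$ in \eqref{defMeps} is precisely what permits this absorption. Concretely, at the step from $i+1$ to $i$, assume by contradiction that $\Delta_{k_0}(x+i)/k_0 \ge 3\eps M^{j(j-1)/2+(j-i)}$ for some $k_0 \in [\ks,\kb]$. Since this exceeds $\eps$, Lemma \ref{stuck}$(i)$ forbids the jump $x+i\to x+i-1$ for $k$ close to $k_0$, which, together with the blocking at $x+j+1$ and the already-proved upper bounds at indices $i+1,\dots,j$, traps the walker in $\{x+i,\dots,x+j+1\}$ on a sub-window. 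Applying Lemma \ref{stuck2} to this sub-window with $i_1=x+i$, $i_2=x+j$, and with $\delta_{\max}$ controlled by the previously-established upper bounds, yields a lower bound on $\Delta_{k^*}(x+j+1)/k^*$ that beats $-3\eps M^{j(j+1)/2}$, contradicting the very definition \eqref{defkstar} of $k^*$.

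The principal difficulty is the combinatorial bookkeeping: carving out maximal time-windows in $[k_{\mathrm{ent}},\kb]$ during which the walker is genuinely trapped in the appropriate sub-interval $\{x+i,\dots,x+j+1\}$ (rather than momentarily leaving it), verifying that these windows are long enough for the $d(k'-k)/k'$ term in Lemma \ref{stuck2} to dominate the $2C_1\delta_{\max}$ error, and tracking the constants carefully enough that the induction closes at every step. This is exactly the ``mainly combinatorial'' content to which the authors allude.
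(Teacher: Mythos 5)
Your overall architecture (downward induction from $i=j$ to $i=1$, confinement windows, Lemma \ref{stuck2} with the error $2C_1\delta_{\max}$ absorbed by the geometric factor $M$) matches the paper's, and the lower bound and the confinement observation are essentially right. But the induction step has a genuine gap: you propose a single contradiction mechanism, namely applying Lemma \ref{stuck2} on the sub-window $\{x+i,\dots,x+j+1\}$ to produce a lower bound on $\Delta(x+j+1)$ violating \eqref{defkstar}. This can only work when $j-i\le L-1$. Lemma \ref{stuck2} is orientation-sensitive through the function $m(\cdot)$: for a window with $i_2-i_1=j-i\le L-1$ interior sites it lower-bounds the stream at the \emph{right} endpoint $x+j+1$ and upper-bounds the stream at the \emph{left} endpoint $x+i$ (the regime of Proposition \ref{systpos}, where the boundary streams push outward), whereas for $j-i\ge L$ the inequalities flip (Proposition \ref{system}: the streams push inward), so you obtain a \emph{lower} bound on $\Delta(x+i)$ and an \emph{upper} bound on $\Delta(x+j+1)$ --- no contradiction with the definition of $k^*$ can be extracted. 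The paper handles $j-i\ge L$ by a different contradiction: if $\Delta_{\kt}(x+i)/\kt\ge 3\eps M^{\frac{j(j-1)}{2}+(j-i)}$ for some $\kt\in[\ks,\kb]$, then Lemma \ref{stuck2} keeps $\Delta_{\kc}(x+i)/\kc>\eps$ and $\Delta_{\kc}(x+j+1)/\kc<-\eps$ for all $\kc\ge\kt$, so the walk is trapped in $\{x+i,\dots,x+j+1\}$ forever, contradicting $x\in R'$. This dichotomy is the heart of the lemma --- it is exactly where the sign change of the linear system at size $L$ enters --- and your proposal contains only half of it.

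Two smaller points. Even in the regime $j-i\le L-1$ the paper's argument is two-staged: it first bounds $\Delta_{\ks}(x+i)/\ks$ by $\tfrac32\eps M^{\frac{j(j-1)}{2}+(j-i)}$ via the lower bound \eqref{LB} on $\Delta(x+j+1)$ (the contradiction being with the maximality in \eqref{defks}, at a time $\kc$ strictly before $k^*$ and with threshold $-3\eps M^{\frac{j(j-1)}{2}}$, not with \eqref{defkstar} itself), and then propagates the bound over $[\ks,\kb]$ using the \emph{upper} bound \eqref{UB} on $\Delta(x+i)$; your sketch collapses these into one step. Also, a decrease of $\Delta(x+j+1)$ is caused by crossings of $\{x+j-1,x+j\}$ or $\{x+j+1,x+j+2\}$, not of $\{x+j,x+j+1\}$, whose local time enters $\Delta(x+j+1)$ with a plus sign; the confinement is better read off directly from Lemma \ref{stuck}$(ii)$.
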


Assuming that \eqref{defkstar} holds, the times $\ks, k^*, \kb$ are key-times at which we know the behavior of the
local stream at $x+j+1$; see Figure~\ref{dessin2}. Using Lemmas \ref{deltaki}~and~\ref{stuck2}, we will compare the evolution of the
local times on the interval $\{x,\ldots,x+j+1\}$ to the solution of the system $(E_j)$, defined in Section~\ref{trigo}.

This will enable us to emphasize some contradictions about the evolution of the local streams and conclude that \eqref{defkstar} cannot hold.

Let us distinguish between two cases: $j\le L-1$ and $j\ge L$.

First, assume that $j\le L-1$. By Lemma \ref{stuck}, the walker remains left-hand from $x+j+1$ on $[\ks,k^*]$, thus the walk is confined to $\{x,\ldots,x+j+1\}$ on this time-span.

Moreover, notice that, at each time-step, $\Delta_k(x+j+1)$ varies by $0$, $\pm1$ or $\pm\al$. Using the definitions \eqref{defks} of $\ks$ and \eqref{defkstar} of $k^*$, and using \eqref{defMeps}, this yields
\begin{eqnarray}\label{inegk}
\frac{k^*-\ks}{k^*}&>&\frac{1}{1+\al}\times\frac{-\Delta_{k^*}(x+j+1)+\Delta_{\ks}(x+j+1)}{k^*}
\nonumber
\\
&>&\frac{1}{1+\al} \bigl(3\eps M^{j(j+1)/2}-3\eps M^{j(j-1)/2} \bigr)
\ge \frac{3\eps M^{(j(j+1)/2)-1}}{1+\al} (M-1 )\hspace*{-20pt}
\\
&>&\frac{6C_1\eps M^{(j(j+1)/2)-1}}{d}.\nonumber
\end{eqnarray}

Hence, using \eqref{LB}, deduced from Lemma \ref{stuck2}, with $i_1=x$, $i_2=x+j$, $k=\ks$ and $k'=k^*$, and using Lemma \ref{deltaki}, this yields
\begin{eqnarray*}
\frac{\Delta_{k^*}(x+j+1)}{k^*}&>&\frac{\Delta_{\ks}(x+j+1)}{\ks}\times\frac{\ks}{k^*}>-3\eps
M^{j(j-1)/2},
\end{eqnarray*}
where we use the definition \eqref{defks} of $\ks$ for the last inequality. This contradicts the definition \eqref{defkstar} of $k^*$ and concludes the first case $j\le L-1$.

Now, assume that $j\ge L$. Similarly, by Lemma \ref{stuck}, the walk is
confined to $\{x,\ldots,x+j+1\}$ on the time-span $[k^*,\kb]$.
Recall the definition \eqref{defkstar} of $k^*$, \eqref{defMeps},
and that $d>3\eps M^{j(j+1)/2}$. Then, using the upper bound of Lemma~\ref{stuck2} with $i_1=x$, $i_2=x+j$, $k=k^*$ and $k'=\kb$, and using Lemma \ref{deltaki}, this yields
\begin{eqnarray*}
\frac{\Delta_{\kb}(x+j+1)}{\kb}&<& -3\eps M^{j(j+1)/2}+6C_1\eps
M^{(j(j+1)/2)-1}
\\
&<&-3\eps M^{j(j-1)/2}.
\end{eqnarray*}
This contradicts the definition \eqref{kb} of $\kb$ and finishes the second case, the induction step and the proof of
Proposition \ref{lemdeltak1}.
\end{pf*}

\begin{pf*}{Proof of Lemma \ref{deltaki}}
By assumption \eqref{hyprec1}, we know that, for all $i\in\{1,\ldots,j\}$ and for all $k\ge k_1$,
\[
\frac{\Delta_k(x+i)}{k}>-3\eps M^{i(i-1)/2}\ge -3\eps M^{(j(j-1)/2)+(j-i)}.
\]
Thus, it remains to prove that, for all $k\in[\ks,\kb]$,
\begin{eqnarray}
\label{indi} \frac{\Delta_k(x+i)}{k}<3\eps M^{(j(j-1)/2)+(j-i)}.
\end{eqnarray}
We will proceed in the same way as in the previous proof, by induction for $i=j,\ldots,1$.

\begin{longlist}[(ii)]
\item[(i)] \emph{The base case}: $i=j$. Notice that $\Delta_{\ks}(x+j)/\ks<2\eps$ as $\Delta_{\ks+1}(x+j)/(\ks+1)<0$ by definition \eqref{defks} of $\ks$.

Now, let us derive this result on the whole time-span $[\ks,\kb]$, by proving that, for all $k\in[\ks,\kb]$,
$\Delta_{k}(x+j)/k<3\eps$. By contradiction, assume that there exists $\kc\in(\ks,\kb]$, such that
\[
\frac{\Delta_{\kc}(x+j)}{\kc}\ge3\eps,
\]
and let $\kt$ the last time before $\kc$ such that $\Delta_{\kt}(x+j)/\kt<2\eps$, that is,
\[
\kt:=\max \biggl\{k\in[\ks,\kc]\dvtx \frac{\Delta_{\kt}(x+j)}{\kt}<2\eps \biggr\}.
\]
Thus, using Lemma \ref{stuck}, the walk is confined to $\{j,j+1\}$ on the time-span $[\kt,\kc]$. It implies
\[
\frac{\Delta_{\kc}(x+j)}{\kc}=\frac{\Delta_{\kt}(x+j)-(\kc-\kt)}{\kt+(\kc-\kt)}<2\eps,
\]
which contradicts our assumptions.\vspace*{1pt}

Therefore, for all $k\in[\ks,\kb]$,
\[
\frac{\Delta_k(x+j)}{k}<3\eps\le3\eps M^{j(j-1)/2}.
\]

\item[(ii)] \emph{The induction step}. Assume that, for all $m\in\{i+1,\ldots,j\}$, $i\ge1$, and for all $k\in[\ks,\kb]$,
\begin{eqnarray}
\label{hyp} \biggl\llvert \frac{\Delta_k(x+m)}{k} \biggr\rrvert <3\eps
M^{(j(j-1)/2)+(j-m)}.
\end{eqnarray}
Let us prove this inequality for $m=i$. More precisely, using \eqref{hyprec1}, it remains to prove that:
\begin{eqnarray}
\label{etap2} \frac{\Delta_{k}(x+i)}{k}<3\eps M^{(j(j-1)/2)+(j-i)}\qquad\mbox{for all } k
\in[\ks,\kb].
\end{eqnarray}

We will now use Lemma \ref{stuck2}, distinguishing between two cases: $j-i\le L-1$ and $j-i\ge L$.

\begin{longlist}[(ii.a)]
\item[(ii.a)] \emph{Assume that $j-i\le L-1$}.

Let us start by proving that
\begin{eqnarray}
\label{etap1} \frac{\Delta_{\ks}(x+i)}{\ks}<\frac{3}{2}\eps M^{(j(j-1)/2)+(j-i)},
\end{eqnarray}
and we will then prove \eqref{etap2} by deriving this property on the whole time-span $[\ks,\kb]$.

By contradiction, assume that
\[
\frac{\Delta_{\ks}(x+i)}{\ks}\ge \frac{3}{2}\eps M^{(j(j-1)/2)+(j-i)}.
\]
Note that, using Lemma \ref{stuck} and recalling the definition \eqref{defks} of $\ks$ and \eqref{kb} of $\kb$, the
walk is confined to $\{x+i,\ldots,x+j+1\}$ as long as $\Delta_k(x+i)/k\ge \eps$ and $k\le\kb$. As
$\llvert  \Delta_{k+1}(x+i)-\Delta_k(x+i)\rrvert  \le 1+\al$, the walk stays on the right of $x+i$ at least until the time $\kc$, which is
the integer defined by
\begin{eqnarray}
\label{def1kc} \kc:= \biggl\lfloor\frac{1+\al +(3/2)\eps M^{(j(j-1)/2)+(j-i)}}{1+\al+\eps}\ks \biggr\rfloor,
\end{eqnarray}
where $\lfloor\cdot\rfloor$ is the floor function.
Using \eqref{inegk}, using that $\ks>1/\eps$ and $d<1$, let us prove that $\kc\in(\ks,k^*)$:
\begin{eqnarray*}
k^*-\kc&=& \bigl(k^*-\ks \bigr)-(\kc-\ks)
\\
&>& \ks \bigl(6C_1\eps M^{(j(j+1)/2)-1}-\tfrac{3}{2}\eps
M^{(j(j+1)/2)-i} \bigr)+\eps\ks-1>0.
\end{eqnarray*}

Hence,\vspace*{2pt} using Lemma \ref{stuck}, the walk is confined to $\{x+i,\ldots,x+j+1\}$ on
the time-span $[\ks,\kc]$.
Moreover, using \eqref{def1kc} and \eqref{defMeps},
\begin{eqnarray*}
\frac{\kc-\ks}{\kc}&>& \frac{(3/2)\eps M^{(j(j-1)/2)+(j-i)}-\eps}{1+\al+(3/2)\eps M^{(j(j-1)/2)+(j-i)}} > \frac{(3/2)\eps M^{(j(j-1)/2)+(j-i)}-\eps}{2(1+\al)}
\\
&>& \frac{6C_1\eps M^{(j(j-1)/2)+(j-i)-1}}{d} \biggl(\frac{M\times d}{4C_1(1+\al)}-\frac{d}{12C_1(1+\al)} \biggr)
\\
&>& \frac{6C_1\eps M^{(j(j-1)/2)+(j-i)-1}}{d}.
\end{eqnarray*}

Thus, using \eqref{LB} with $i_1=x+i$, $i_2=x+j$, $k=\ks$ and $k'=\kc$, and using the hypothesis \eqref{hyp}, we obtain
\begin{eqnarray*}
\frac{\Delta_{\kc}(x+j+1)}{\kc}&>&\frac{\Delta_{\ks}(x+j+1)}{\ks}\times\frac{\ks}{\kc}>-3\eps
M^{j(j-1)/2},
\end{eqnarray*}
which contradicts the definitions \eqref{defks} of $\ks$ and \eqref{defkstar} of $k^*$, and we can conclude that
\eqref{etap1} holds.

Now, let us derive \eqref{etap1} for the whole time-span $[\ks,\kb]$: in other words, let us prove \eqref{etap2}, which
states that, for all $k\in[\ks,\kb]$,
\begin{eqnarray*}
\frac{\Delta_{k}(x+i)}{k}<3\eps M^{(j(j-1)/2)+(j-i)}.
\end{eqnarray*}
By contradiction, assume that there exists $\kc\in[\ks,\kb]$ such that
\begin{eqnarray}
\label{defkc1} \frac{\Delta_{\kc}(x+i)}{\kc}\ge3\eps M^{(j(j-1)/2)+(j-i)},
\end{eqnarray}
and define $\kt$ such that
\begin{eqnarray}
\label{def2kt} \kt:=\max \biggl\{k\in[\ks,\kc]\dvtx \frac{\Delta_{\kt}(x+i)}{\kt}<
\frac{3}{2}\eps M^{(j(j-1)/2)+(j-i)} \biggr\}.
\end{eqnarray}
Using that $|\Delta_{k+1}(x+i)-\Delta_k(x+i)|\le1+\al$, we have that
\begin{eqnarray*}
\frac{\kc-\kt}{\kc}&>&\frac{1}{1+\al} \biggl(3\eps M^{(j(j-1)/2)+(j-i)}-
\frac{3}{2}\eps M^{(j(j-1)/2)+(j-i)} \biggr)
\\
&>& 3\eps M^{(j(j-1)/2)+(j-i)-1}\times \frac{M}{2(1+\al)}>\frac{6C_1\eps M^{(j(j-1)/2)+(j-i)-1}}{d}.
\end{eqnarray*}
Using Lemma \ref{stuck}, the walk is confined to $\{x+i,\ldots,x+j+1\}$ on the time-span $[\kt,\kc]$. Using \eqref{UB}, deduced from Lemma \ref{stuck2}, with $i_1=x+i$, $i_2=x+j$, $k=\kt$ and $k'=\kc$, and using the hypothesis \eqref{hyp}, we obtain
\begin{eqnarray*}
\frac{\Delta_{\kc}(x+i)}{\kc}&<&\frac{\Delta_{\kt}(x+i)}{\kt}\times\frac{\kt}{\kc}<
\frac{3}{2}\eps M^{(j(j-1)/2)+(j-i)},
\end{eqnarray*}
which contradicts assumption \eqref{defkc1}, proves \eqref{etap2} and finishes the first case $j-i\le L-1$.

\item[(ii.b)] \emph{Assume that  $j-i\ge L$}.

Here, we directly prove \eqref{etap2}. By contradiction, assume that there exists $\kt\in[\ks,\kb]$ such that
\begin{eqnarray}
\label{ptptpt} \frac{\Delta_{\kt}(x+i)}{\kt}\ge3\eps M^{(j(j-1)/2)+(j-i)},
\end{eqnarray}
which implies, by Lemma \ref{stuck}, that $X_{\kt}\in\{x+i,\ldots,x+j+1\}$. Let us prove that the walker stays stuck
forever in this interval. In other words, let us prove by induction that $X_{\kc}\in\{x+i,\ldots,x+j+1\}$ for all
$\kc\ge\kt$. This will contradict the definition of $R'$ and enable us to conclude.

Assume, for $\kc\ge\kt$, that $X_m\in\{x+i,\ldots,x+j+1\}$ for all $m\in[\kt,\kc]$, and let us prove that
$X_{\kc+1}\in\{x+i,\ldots,x+j+1\}$.

On one hand, using that $d>3\eps M^{K(K+1)}$ by \eqref{defMeps}, using \eqref{ptptpt}, and applying Lemma \ref{stuck2}, with $i_1=x+i$, $i_2=x+j$, $k=\kt$ and $k'=\kc$, with the hypothesis~\eqref{hyp}, we obtain
\begin{eqnarray*}
\frac{\Delta_{\kc}(x+i)}{\kc}&\ge&3\eps M^{(j(j-1)/2)+(j-i)}-6C_1\eps
M^{(j(j-1)/2)+(j-i)-1}> \eps.
\end{eqnarray*}
Thus, using Lemma \ref{stuck}, if $X_{\kc}=x+i$ then $X_{\kc+1}=x+i+1$.

On the other hand, if $\kc\le k^*$, then $\Delta_{\kc}(x+j+1)/\kc<-\eps$ and, therefore, by Lemma \ref{stuck}, if
$X_{\kc}=x+j+1$, then $X_{\kc+1}=x+j$. Otherwise, if $\kc>k^*$, applying Lemma \ref{stuck2}, with $i_1=x+i$, $i_2=x+j$, $k=k^*$ and $k'=\kc$, with the hypothesis~\eqref{hyp}, and using \eqref{defkstar} and \eqref{defMeps}, we obtain
\begin{eqnarray*}
\frac{\Delta_{\kc}(x+j+1)}{\kc}&<&-3\eps M^{j(j+1)/2}+6C_1\eps
M^{(j(j-1)/2)+(j-i)-1}<-3\eps M^{j(j-1)/2}.
\end{eqnarray*}

Thus,\vspace*{1pt} again using Lemma \ref{stuck}, if $X_{\kc}=x+j+1$ then $X_{\kc+1}=x+j$. Therefore, by induction, this implies
that $X_{\kc}\in\{x+i,\ldots,x+j+1\}$ for all $\kc\ge\kt$, which contradicts the definition of $R'$ and concludes the
second case, $j-i\ge L$, and the proof of the lemma.\quad\qed
\end{longlist}
\end{longlist}\noqed
\end{pf*}


\subsection{Proof of Lemma \texorpdfstring{\protect\ref{somfinie}}{2.4}} \label{aswei2}


Fix $L\ge1$ and $\al\in(\al_{L+1},\al_L)$. Let $x,K\in\mathbb{Z}$ such that $K\ge L$, and fix $a>0$. Recall that Lemma \ref{somfinie} states that, almost surely,
\begin{eqnarray*}
\bigl\{R'=\{x,x+1,\ldots,x+K+1\} \bigr\}\subset \Biggl\{\sum
_{k=1}^{+\infty} e^{a\beta[l_k(x+L+2)-\al
l_k(x+L+1)]}<+\infty \Biggr
\}.
\end{eqnarray*}

\begin{pf*}{Proof of Lemma \ref{somfinie}}
Let us work on the event $ \{R'=\{x,x+1,\ldots,x+K+1\} \}$, where $K\ge L$.

First, note that there exists a.s. $N_1\in\mathbb{N}$ such that, for all $k\ge N_1$, $X_k\in R'$.
Then we use Proposition \ref{lemdeltak1}, which implies that for all $\eps>0$, there a.s. exists $N_2\in\mathbb{N}$
such that for all $j\in\{x+1,\ldots,x+K\}$ and for all $k\ge N_2$,
\[
\biggl\llvert \frac{\Delta_k(j)}{k}\biggr\rrvert <\frac{\eps}{4}.
\]
Define $N:=\max (N_1, N_2 )$.

We\vspace*{1.5pt} need the results on the linear system presented in Proposition \ref{system}. Notice that, if
$(\widetilde{l_0},\ldots,\tilde{l}_{K+2})$ is such that $\widetilde{l_0}=0$, $\sum_{j=1}^{K+2}\tilde{l}_j=1$, and
$|\widetilde{d_j}|<\eps$ for all $j=1,\ldots,K$, where $\widetilde{d_j}$ is defined as in \eqref{dj}, then this vector can
be seen as a perturbed solution of the linear system (\ref{eq}). Thus, by Proposition \ref{system},
$\tilde{l}_{L+2}-\al \tilde{l}_{L+1}<-c(K)/2$ as soon as $\eps$ is small enough, depending on $K$ and $\al$.

Then, for any $k\ge 2N$, define for all $j\in\{0,\ldots,K+2\}$,
\[
\tilde{l}_j=\frac{l_k(x+j)-l_N(x+j)}{k-N}.
\]
Then we have $\tilde{l}_0=\tilde{l}_{K+2}=0$, $\sum_{j=1}^{K+2}\tilde{l}_j=1$ and, for all
$j\in\{1,\ldots,K\}$,
\[
\llvert \widetilde{d}_j\rrvert \le \frac{\llvert  \Delta_k(x+j)\rrvert  +\llvert  \Delta_N(x+j)\rrvert  }{k}\times
\frac{k}{k-N}<\eps.
\]
This implies, as soon as $\eps>0$ is small enough, that
\[
\frac{ (l_k(L+2)-l_N(L+2) )-\al ( l_k(L+1)-l_N(L+1) )}{k-N}<-\frac{c(K)}{2},
\]
therefore, for any $a>0$, we can complete the proof with
\begin{eqnarray*}
&& \sum_{k=2N}^{+\infty} e^{a\beta[l_k(L+2)-\al l_k(L+1)]}
\\
&&\qquad \le
e^{a\beta[l_{N}(L+2)-\al
l_{N}(L+1)]}\times\sum_{k=2N}^{+\infty}
e^{-a(c(K)/2)(k-N)}<+\infty.
\end{eqnarray*}\upqed
\end{pf*}

\section{Proof of Theorem \texorpdfstring{\protect\ref{algrd}}{1.5}} \label{palgrd}


The goal of this section is to prove that if $\al>1=\al_2$, then the walk localizes on $3$ vertices almost surely.

\begin{pf*}{Proof of Theorem \protect\ref{algrd}}
Fix $x\in\mathbb{Z}$. We want to prove that $\Pb (R'=\{x,x+1,x+2,x+3\} )=0$. We will then conclude by taking the union over $x$ and using Theorem \ref{L2L3}.

Let us first prove the following lemma.

\begin{lem}\label{infkc}
For any constant $C>0$, on the event $\{R'=x,x+1,x+2,x+3\}$, there a.s. exist infinitely many $k$'s such that one of
the following statements holds:
\begin{itemize}
\item $X_k=x+1$, $\Delta_k(x+1)\le -C$ and $\Delta_k(x+1)+\Delta_k(x+2)\le -C$;
\item $X_k=x+2$, $\Delta_k(x+2)\ge C$ and $\Delta_k(x+1)+\Delta_k(x+2)\ge C$.
\end{itemize}
Let $\mathcal{K}$ be the set of those $k$'s.
\end{lem}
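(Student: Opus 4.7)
The plan is to work on the event $\mathcal{E} := \{R' = \{x,x+1,x+2,x+3\}\}$, which corresponds to $K = |R'|-2 = 2$ and $L = 1$ (since $\al > 1 = \al_2$). By Proposition \ref{lemdeltak1}, $\Delta_k(x+1)/k, \Delta_k(x+2)/k \to 0$, and by Proposition \ref{systpos} the normalized local times converge to the unique solution $(l_1,l_2,l_3) = (1,1+\al,1)/(3+\al)$ of $(E_2)$. After some random time, the walker is confined to $\{x,\ldots,x+3\}$, so $l_k(x)$ and $l_k(x+4)$ stabilize; a direct computation then yields, for $k$ large,
\[
Y_k := \Delta_k(x+1) + \Delta_k(x+2) = (1-\al)\bigl(l_k(x+1) - l_k(x+3)\bigr) + \text{const},
\]
so $Y_k$ only changes when an edge $\{x,x+1\}$ or $\{x+2,x+3\}$ is crossed.

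Next I would decompose the walk into excursions from $x+1$: Type A ($x+1 \to x \to x+1$), Type B ($x+1 \to x+2 \to x+1$ with no visit to $x+3$), and Type $C_n$ (with $n \ge 1$ visits to $x+3$). Setting $\delta := \Delta(x+1)$ and $y := Y$, one computes the joint increment per excursion: $(2,-2(\al-1))$ for Type A, $(-2,0)$ for Type B, and $(-2 + 2n\al,\,2n(\al-1))$ for Type $C_n$. Denoting by $(\delta_i, y_i) = (\Delta_{t_i}(x+1), Y_{t_i})$ the values at the $i$-th visit $t_i$ to $x+1$, the sequence $\{(\delta_i, y_i)\}$ is a two-dimensional process. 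Since every stream is $o(k)$, each excursion type occurs with probability bounded away from $0$ and $1$ (eventually), and the Type A and Type B increment vectors are linearly independent in $\mathbb{R}^2$, so the one-step conditional covariance of $(\delta_i, y_i)$ is uniformly positive definite.

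By the left/right symmetry of the walk, it suffices to treat Case~1, namely to produce infinitely many $i$'s with $\delta_i \le -C$ and $y_i \le -C$. Since $(\delta_i, y_i)$ is a centered two-dimensional random walk with uniformly non-degenerate increment covariance, it is recurrent and visits the cone $(-\infty, -C]^2$ infinitely often. I would formalise this via a conditional Borel--Cantelli argument in the spirit of Section \ref{martarg}: at each visit $t_i$ there is a probability bounded below (uniformly) that a specific $O(1)$-length sequence of excursion types is executed next, which shifts $(\delta, y)$ by a fixed vector pointing into the third quadrant; summing the conditional probabilities over $i$ and applying Levy's Borel--Cantelli lemma gives infinitely many occurrences of the desired event.

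The hard part will be this last joint 2D recurrence step: while it is comparatively easy to show that each of $\delta_i$ and $y_i$ is individually unbounded below, one must rule out a synchronization in which $y_i$ is very negative but $\delta_i$ remains bounded (with the compensation coming from $\Delta_{t_i}(x+2)$). The linear independence of the Type A and Type B increment vectors is precisely what prevents this degeneracy, generating enough directional freedom to force the joint process into the third quadrant infinitely often.
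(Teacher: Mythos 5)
There is a genuine gap, and it sits exactly where you locate "the hard part". First, your reduction to producing infinitely many $k$ with $\Delta_k(x+1)\le -C$ \emph{and} $Y_k:=\Delta_k(x+1)+\Delta_k(x+2)\le -C$ replaces the lemma by a strictly stronger statement, and the disjunction in the lemma is not cosmetic: all that is known about $Y_k$ is that it is $o(k)$ (it equals $-(\al-1)\bigl(l_k(x+1)-l_k(x+3)\bigr)$ plus an eventually constant term), and nothing in your argument rules out $Y_k\to+\infty$ along the walk, in which case your target set is eventually never visited. The paper's proof is built around precisely this obstruction: it first shows, via the law of large numbers $Z_k(x+1)/Z_k(x)\to\al+2$ and the conditional Borel--Cantelli lemma, that there are infinitely many times $k_0$ with $X_{k_0}=x+1$, $\Delta_{k_0}(x+1)$ in a \emph{fixed bounded window} $[-c_{\min},c_{\max}]$ and $\Delta_{k_0}(x)\ge C$; then it splits on whether $Y_{k_0}<C$ or $Y_{k_0}\ge C$, steering toward the first alternative in the former case and toward the second alternative in the latter (which is cheap because crossings of $\{x+1,x+2\}$ leave $Y$ invariant). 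Each branch needs only a trajectory of length bounded in terms of $C,\al,\beta$, so its conditional probability is bounded below and conditional Borel--Cantelli applies.

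Second, the recurrence step itself is not sound as stated. The process $(\delta_i,y_i)$ is not a centered two-dimensional random walk: the excursion-type probabilities depend on the current values of $\Delta(x+1)$ and $\Delta(x+2)$, producing a strong state-dependent drift, and "all streams are $o(k)$" does not imply the one-step transition probabilities are bounded away from $0$ and $1$ --- that would require $\Delta_k(x+1)$ and $\Delta_k(x+2)$ to be \emph{bounded}, which is essentially what has to be proved. Non-degenerate conditional covariance of increments does not give neighbourhood recurrence for a general adapted process. Your fallback conditional Borel--Cantelli argument then fails for the same reason as above: from a state where $y_i$ (or $\delta_i$) is large and positive, reaching $(-\infty,-C]^2$ requires a number of excursions growing with $|y_i|$, so the conditional probability of succeeding in an $O(1)$-length block is not uniformly bounded below and the sum of conditional probabilities need not diverge. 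To repair the proof you need both missing ingredients from the paper: an a priori argument that $\Delta_k(x+1)$ (at visits to $x+1$) returns to a compact window infinitely often, and the case split that uses the second alternative of the lemma to absorb the uncontrolled values of $Y$.
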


\begin{pf}
Let us work on the event $\{R'=\{x,x+1,x+2,x+3\}\}$.

Using Proposition \ref{lemdeltak1}, and solving equation \eqref{eq} in the case $L=2$, one can deduce that $Z_k(x+1)/Z_k(x)$
converges to $\alpha+2$ (see Proposition \ref{systpos}).

Using the conditional Borel--Cantelli lemma [see \cite{Durrett}, Chapter~4, (4.11)], we have, a.s.,
\[
\lim_{m\rightarrow\infty} \frac{\sum_{k=1}^m\1_{\{X_k=x+1\}}\Pb(X_{k+1}=x|\Fk)}{Z_m(x)}=1.
\]

This implies that, for any $\eps>0$, there exist infinitely many $k$'s such that $X_k=x+1$ and
$\Pb(X_{k+1}=x|\Fk)>\frac{1}{\al+2}-\eps$, and there exist infinitely many $k$'s such that $X_k=x+1$ and
$\Pb(X_{k+1}=x|\Fk)<\frac{1}{\al+2}+\eps$. Otherwise, this would contradict the conditional Borel--Cantelli lemma.

Then, there exists a positive constant $c_{\max}$ (resp., $c_{\min}$), depending only on $\alpha$ and $\beta$, such that
$X_k=x+1$ and $\Delta_k(x+1)<c_{\max}$ [resp., $\Delta_k(x+1)>-c_{\min}$] for infinitely many $k$'s.

If $k$ is large enough, then the walk does not leave the set $\{x,\ldots,x+3\}$. Therefore, $\Delta_k(x+1)$ decreases at
most by $2\beta$ between two times where the walker is in $x+1$. Hence, there exist infinitely many $k$'s such that
$X_k=x+1$ and
\[
-c_{\min}\le \Delta_k(x+1)\le c_{\max}.
\]
 Notice that we could give some explicit bounds $c_{\min}$ and $c_{\max}$ but it is not useful.

Now, notice that, on the event $\{R'=\{x,x+1,x+2,x+3\}\}$, $\lim_k \Delta_k(x)=+\infty$. Therefore, on this event,
there exist infinitely many $k$'s such that: $-c_{\min}\le \Delta_k(x+1)\le c_{\max}$, $X_k=x+1$ and $\Delta_k(x)\ge
C$, for any constant $C>0$.

Then assume that
\[
k_0\in \bigl\{k\in\mathbb{N}\dvtx -c_{\min}\le
\Delta_k(x+1)\le c_{\max}, X_k=x+1,
\Delta_k(x)\ge C \bigr\},
\]
 and let us prove that
\begin{eqnarray}
\label{labproba} \Pb \bigl(\exists k\in[k_0,M+k_0]\dvtx
k\in \mathcal{K}|\mathcal{F}_{k_0} \bigr)\ge\delta>0,
\end{eqnarray}
where $M:=M(C,\al,\beta)$ and $\delta:=\delta(C,\al,\beta)$ are two positive constants.

To complete this proof, let us consider two cases: $\Delta_{k_0}(x+1)+\Delta_{k_0}(x+2)< C$ and
$\Delta_{k_0}(x+1)+\Delta_{k_0}(x+2)\ge C$. Let us show, in each case, that \eqref{labproba} holds. This will imply the
conclusion, by applying again the conditional Borel--Cantelli lemma.

\begin{longlist}[(ii)]
\item[(i)] Assume that $\Delta_{k_0}(x+1)+\Delta_{k_0}(x+2)< C$.
As $-c_{\min}\le \Delta_{k_0}(x+1)\le c_{\max}$, this implies that $\Delta_{k_0}(x+2)<c_{\min}+C$. Then we have some
control on the conditional probabilities to go from $x$ to $x+1$, from $x+1$ to $x$ and $x+2$, and to go from $x+2$ to
$x+1$.
This implies that we can control the probabilities of the following trajectories: the walker starts  at $x+1$ at time $k_0$; he crosses, say $2T$ times, the edge $\{x,x+1\}$, coming back to $x+1$; then the walker crosses roughly $2T+C+c_{\max}$ times the edge $\{x+1,x+2\}$, coming back to $x+1$ at some time~$k_1$. Note that, on these trajectories, the walker only visits the sites $x$, $x+1$ and $x+2$ between the times $k_0$ and $k_1$.

We deduce that, with probability bounded below by a constant depending only on $C$, $\al$ and $\beta$, there exists
\[
k_1\in \biggl[k_0,k_0+C+c_{\max}+4
\biggl(1+\frac{C}{\al-1} \biggr) \biggr],
\]
such that
$X_{k_1}=x+1$ and
\begin{eqnarray*}
l_{k_1}(x+1)-l_{k_0}(x+1)&\ge&2C/(\al-1),
\\
l_{k_1}(x+2)-l_{k_0}(x+2)&\ge& l_{k_1}(x+1)-l_{k_0}(x+1)+C+c_{\max},
\\
l_{k_1}(x)-l_{k_0}(x)&=&l_{k_1}(x+3)-l_{k_0}(x+3)=l_{k_1}(x+4)-l_{k_0}(x+4)=0.
\end{eqnarray*}
Moreover, notice that, for all $k$,
\begin{eqnarray}\label{somdelta}
&& \Delta_k(x+1)+\Delta_k(x+2)
\nonumber\\[-8pt]\\[-8pt]\nonumber
&&\qquad =-(\alpha-1)
\bigl(l_k(x+1)-l_k(x+3) \bigr)
-\al l_k(x)+\al l_k(x+4).
\end{eqnarray}
We can now conclude that $\Delta_{k_1}(x+1)\le-C$ and $\Delta_{k_1}(x+1)+\Delta_{k_1}(x+2)\le-C$.

\item[(ii)] Assume that $\Delta_{k_0}(x+1)+\Delta_{k_0}(x+2)\ge C$ and recall that $-c_{\min}\le \Delta_{k_0}(x+1)\le
c_{\max}$. Let us prove that, with probability bounded below by a constant depending only on $C$, $\al$ and $\beta$, there exists $k_1\ge k_0$ such that $X_{k_1}=x+2$,
$\Delta_{k_1}(x+2)>C$ and $\Delta_{k_1}(x+1)+\Delta_{k_1}(x+2)\ge C$.

First notice, by \eqref{somdelta}, that the visits to the edge $\{x+1,x+2\}$ do not change the value of
$\Delta_{k_0}(x+1)+\Delta_{k_0}(x+2)$.

If $\Delta_{k_0}(x+2)>C$, then with probability bounded below by a constant depending only on $C$, $\al$ and $\beta$, $k_1=k_0+1$ because the walker just needs to jump from
$x+1$ to $x+2$. Otherwise, $C-c_{\max}\le\Delta_{k_0}(x+2)\le C$ and we have some control on the conditional
probabilities to jump from $x+1$ to $x+2$ and from $x+2$ to $x+1$, and we can lower-bound the probability to cross only
the edge $\{x+1,x+2\}$, at least $c_{\max}$ times. Thus, with probability bounded below by a constant depending only on $C$, $\al$ and $\beta$, there exists $k_1\in
[k_0,k_0+c_{\max}+2]$ such that $X_{k_1}=x+2$, $\Delta_{k_1}(x+2)>C$ and $\Delta_{k_1}(x+1)+\Delta_{k_1}(x+2)\ge C$.\quad\qed
\end{longlist}\noqed
\end{pf}

Let us define by induction a nondecreasing sequence of stopping times. First, define the time
$\tau_0:=\inf_k\{k\ge0\dvtx k\in\mathcal{K}\}$, then define, for all $n\ge0$:
\begin{eqnarray*}
\tau_{2n+1}&:=&\inf_k \bigl\{k\ge
\tau_{2n}\dvtx X_k=(x+3)\1_{\{X_{\tau_{2n}}=x+1\}}+ x
\1_{\{X_{\tau_{2n}}=x+2\}} \bigr\},
\\
\tau_{2(n+1)}&:=&\inf_k\{k\ge\tau_{2n+1}
\dvtx X_k \in\mathcal{K}\}.
\end{eqnarray*}

Notice that
\[
\bigl\{R'=\{x,x+1,x+2,x+3\} \bigr\}\subset\bigcap
_n \{\tau_n<+\infty \}.
\]
Let us prove that
\[
\Pb (\tau_{2n+1}<\infty|\mathcal{F}_{\tau_{2n}} )
\1_{\{\tau_{2n}<+\infty\}}<1-\delta,
\]
for some $\delta>0$, which depends only on $\al$, $\beta$ and $C$. This will enable us to conclude.

First, assume that some vector $(l_0,l_1,l_2,l_3)$ is such that $l_0\ge0$, $l_3=0$, $-\al
l_0+l_1-l_2=d_1\in\mathbb{R}$, and $l_1+l_2=1$. Define $d_2:=-\al l_1+l_2$, then one can easily compute $d_2$ as a
function of $d_1$ and $l_0$, and prove that, if $l_0\ge0$,
\begin{eqnarray}
\label{minisyst} d_1+d_2=-\frac{\al-1}{2}-
\frac{\al-1}{2}d_1-\al\frac{1+\al}{2}l_0\le -
\frac{\al-1}{2}-\frac{\al-1}{2}d_1.
\end{eqnarray}
Note that this is a particular case of Corollary \ref{corgensyst}.

Now, assume that $\{\tau_{2n}<+\infty\}$, and recall that $\tau_{2n}\in\mathcal{K}$. We will complete the proof assuming
that $X_{\tau_{2n}}=x+1$, $\Delta_{\tau_{2n}}(x+1)\le -C$ and $\Delta_{\tau_{2n}}(x+1)+\Delta_{\tau_{2n}}(x+2)\le -C$.
The other case, if $X_{\tau_{2n}}=x+2$, is the exact symmetric of this one.

For any $k>\tau_{2n}$, as long as $l_k(x+3)-l_{\tau_{2n}}(x+3)=0$, defining for all $j\in\{0,1,2,3\}$
\[
l_j=\frac{l_k(x+j)-l_{\tau_{2n}}(x+j)}{l_k(x+1)-l_{\tau_{2n}}(x+1)+l_k(x+2)-l_{\tau_{2n}}(x+2)},
\]
we deduce, using \eqref{minisyst}, that
\begin{eqnarray*}
\Delta_k(x+1)+\Delta_k(x+2)&=&\Delta_k(x+1)+
\Delta_k(x+2)
\\
&&{}-\Delta_{\tau_{2n}}(x+1)-\Delta_{\tau_{2n}}(x+2)
\\
&&{}+\Delta_{\tau_{2n}}(x+1)+\Delta_{\tau_{2n}}(x+2)
\\
&\le&-\frac{\alpha-1}{2} \bigl(l_k(x+1)+l_k(x+2) \bigr)
\\
&&{}-\frac{\alpha-1}{2} \bigl(-l_{\tau_{2n}}(x+1)-l_{\tau_{2n}}(x+2)
\bigr)
\\
&&{}-\frac{\alpha-1}{2} \bigl(\Delta_k(x+1)-\Delta_{\tau_{2n}}(x+1)
\bigr)-C.
\end{eqnarray*}
This inequality also holds when $k=\tau_{2n}$.
Thus, for any $k\ge\tau_{2n}$, if
\begin{eqnarray*}
&& \Delta_k(x+1)-\Delta_{\tau_{2n}}(x+1)
\\
&&\qquad >-\tfrac{1}{2}
\bigl(l_k(x+1)+l_k(x+2)-l_{\tau_{2n}}(x+1)-l_{\tau_{2n}}(x+2)
\bigr),
\end{eqnarray*}
then
\begin{eqnarray*}
&& \Delta_k(x+1)+\Delta_k(x+2)
\\
&&\qquad <-\frac{\alpha-1}{4}
\bigl(l_k(x+1)+l_k(x+2)-l_{\tau_{2n}}(x+1)-l_{\tau_{2n}}(x+2)
\bigr)-C,
\end{eqnarray*}
otherwise, as $\Delta_{\tau_{2n}}(x+1)\le-C$,
\[
\Delta_k(x+1)\le -\tfrac{1}{2} \bigl(l_k(x+1)+l_k(x+2)-l_{\tau_{2n}}(x+1)-l_{\tau_{2n}}(x+2)
\bigr)-C.
\]
To conclude, we just have to notice that
\begin{eqnarray*}
\hspace*{-4pt}&&\Pb (\tau_{2n+1}<\infty|\mathcal{F}_{\tau_{2n}} )
\1_{\{\tau_{2n}<+\infty\}}
\\
\hspace*{-4pt}&&\!\qquad\le \sum_{k=\tau_{2n}}^{\infty}\E \biggl(
\frac{\1_{\{X_k=x+1,l_k(x+3)=l_{\tau_{2n}}(x+3)\}}}{1+e^{-2\beta\Delta_k(x+1)}}\frac{1}{1+e^{-2\beta\Delta_k(x+2)}}\bigg| \mathcal{F}_{\tau_{2n}} \biggr)
\\
\hspace*{-4pt}&&\!\qquad\le \sum_{k=\tau_{2n}}^\infty\E \bigl(
\1_{\{X_k=x+1,l_k(x+3)=l_{\tau_{2n}}(x+3)\}}e^{2\beta\Delta_k(x+1)}\wedge e^{2\beta[\Delta_k(x+1)+\Delta_k(x+2)]}| \mathcal{F}_{\tau_{2n}}
\bigr)
\\
\hspace*{-4pt}&&\!\qquad\le \sum_{k=0}^\infty
e^{-2\beta[((\alpha-1)/4\wedge 1/2)k+C]}
\le\frac{e^{2\beta((\alpha-1)/4 \wedge 1/2)}}{2\beta((\alpha-1)/4 \wedge 1/2)} e^{-C}
\le 1-\delta,
\end{eqnarray*}
choosing $C=C(\alpha,\beta)$ large enough and for some $\delta=\delta(\alpha,\beta)>0$. This completes the proof.
\end{pf*}


\section{Generalizations} \label{conclusion}


In \cite{ETW}, interesting generalizations are proposed, including a model where the interaction depends on more
than four neighbouring edges, that is, replacing the definition of $\Delta_n(j)$ by
\[
\al_k l_n(j-k)+\cdots+\al_1
l_n(j-1)+\al_0 l_n(j)-\al_0
l_n(j+1)-\cdots-\al_k l_n(j+1+k),
\]
for some $\al_0,\ldots,\al_k$. The techniques of \cite{ETW}, as it is noticed therein, seem to be adaptable to derive
results about such models, by investigating the behavior of the associated generalized Fibonacci sequence. Moreover, in
some cases, we believe that our techniques are also adaptable. In particular, we could adapt the variant of Rubin's
construction as it is done in the present paper and apply it to these models, as soon as $\al_1,\ldots,\al_k$ are
nonpositive. The parameter $\al_0$ could be positive. In these cases, we could recover some sort of weight function,
which could be decreasing with respect to the local times on neighbouring edges but nondecreasing with respect to
local times on edges further away.
More precisely, one could prove a generalized version of Lemma \ref{lemPT}
and prove that all the results  of Section~\ref{monomart} hold for a process $X'$ as soon as it is described by the following transition probability:
\begin{eqnarray*}
&&\Pb \bigl(X_{k+1}'=X_k'-1|\Fk
\bigr)
\\
&&\qquad= \frac{\widetilde{w}(Z_k(X_k'
-1),\ldots,Z_k(X_k'-n_0))}{\widetilde{f}^-(X_k',N_k(X_k',X_k'-1))}
\\
&& \quad\qquad{}\Big/ \biggl(\frac{\widetilde{w}(Z_k(X_k'
-1),\ldots,Z_k(X_k'-n_0))}{\widetilde{f}^-(X_k',N_k(X_k',X_k'-1))}
\\
&&\hspace*{47pt}{} +
\frac{\widetilde{w}(Z_k(X_k'+1),\ldots,Z_k(X_k'+n_0))}{\widetilde{f}^+(X_k',N_k(X_k',X_k'+1))}\biggr),
\end{eqnarray*}
for some $n_0\in\mathbb{N}$ and for any $k\in\mathbb{N}$, where the functions $\widetilde{f}{}^\pm$ are positive and
where $\widetilde{w}$ is positive and nondecreasing with respect to each variable. Note that the resulting term
$\widetilde{w}/\widetilde{f}{}^\pm$ can be nonmonotonic.

Finally, we refer the reader to the concluding remarks of \cite{ETW} that have been very useful to the author to
learn more about some open problems directly related to the model studied in the current paper.


\section*{Acknowledgments}
This work was done by the author during his PhD at the Universit\'e Paul Sabatier, Toulouse.

I would like to thank my PhD advisor Pierre Tarr\`es for introducing me to this problem, for motivating discussions and
for uncountably many comments on earlier versions of this work.



\printaddresses
\end{document}